\documentclass[11pt]{amsart}
\usepackage{amsmath,amssymb,amsthm,mathrsfs}
\usepackage{geometry,booktabs}
\usepackage[colorlinks=true,linkcolor=blue,citecolor=blue,urlcolor=blue]{hyperref}
\usepackage{enumitem}

\newtheorem{theorem}{Theorem}[section]
\newtheorem{proposition}[theorem]{Proposition}
\newtheorem{lemma}[theorem]{Lemma}
\newtheorem{corollary}[theorem]{Corollary}

\theoremstyle{definition}
\newtheorem{definition}[theorem]{Definition}
\theoremstyle{remark}
\newtheorem{remark}[theorem]{Remark}

\newcommand{\eps}{\varepsilon}
\newcommand{\id}{\mathrm{id}}
\newcommand{\cop}{\mathrm{cop}}
\newcommand{\op}{\mathrm{op}}
\newcommand{\Comod}{\operatorname{Comod}_{r}}
\newcommand{\can}{\operatorname{can}}
\newcommand{\Hom}{\operatorname{Hom}}
\newcommand{\Alg}{\operatorname{Alg}}

\newcommand{\Aut}{\operatorname{Aut}}
\newcommand{\EndHopf}{\operatorname{End}_{\mathrm{Hopf}}}
\newcommand{\AutHopf}{\operatorname{Aut}_{\mathrm{Hopf}}}

\numberwithin{equation}{section}
\allowdisplaybreaks

\title{Right Coideal Subalgebras, Hopf--Galois Objects, and $\imath$-Hopf Algebras}
\author{Rongchuan Xiong}
\address{Department of Mathematics, Changzhou University, Changzhou 213164, China}
\email{rcxiong@foxmail.com}

\makeatletter
\@namedef{subjclassname@2020}{\textup{2020} Mathematics Subject Classification}
\makeatother

\subjclass[2020]{16T05}
\keywords{Hopf bi-Galois objects, comodule algebras, right coideal subalgebras, $\imath$-Hopf algebras, Hopf $2$-cocycles}
\date{}

\begin{document}
	\begin{abstract}
		We study right comodule algebras and their connections with Hopf bi-Galois objects and right coideal subalgebras. For a Hopf $2$-cocycle $\sigma$ on $K$ and a unital
		right-colinear algebra map $\Gamma:A\to K$, we show that
		$\Gamma:A_\sigma\to K^\sigma$ is an algebra map precisely
		when $\sigma$ is trivial on
		$\operatorname{im}\Gamma\otimes\operatorname{im}\Gamma$. We describe comodule-algebra morphisms
		$A_\sigma\to K^\sigma$ by characters and establish
		injectivity criteria using the coradical and extensions
		of characters to convolution-invertible functionals. For coactions on Hopf quotients projecting to the coproduct, character-evaluation maps are injective, and equality of their images is determined by a unique algebra character.
		
		For the $\imath$-Hopf algebras $H^\imath_\tau$ of Chen--Lu--Pan--Ruan--Wang, with $H$ having a bijective antipode and a Hopf pairing, we obtain a Hopf bi-Galois realization $H^\imath_\tau\hookrightarrow F_\sigma(H\otimes H^{\mathrm{cop}})$. We identify algebra characters with nonzero $\tau$-twisted compatible maps $\chi$ and prove that the corresponding morphisms $\xi_{\tau,\chi}$ of Chen--Lu--Pan--Ruan--Wang are injective, yielding right coideal realizations in $(H\otimes H)^\imath$. We classify the $H^\imath_\tau$ up to comodule-algebra isomorphism and determine when these realizations have the same image.
	\end{abstract}
	
	\maketitle

	\section{Introduction}\label{sec:introduction}
	
	Hopf bi-Galois objects provide a concrete realization of monoidal
	equivalences between comodule categories through cotensor products
	\cite[Theorem~3.2.2]{Sch04}. In the cleft case, Hopf
	$2$-cocycles make this construction explicit: a Hopf $2$-cocycle
	$\sigma$ on a Hopf algebra $K$ produces both the Hopf $2$-cocycle deformation
	$K^\sigma$ and a $(K,K^\sigma)$-Hopf bi-Galois object $F_\sigma(K)$
	\cite[Proposition~3.1.6]{Sch04}.

	We study morphisms from right comodule algebras to
	Hopf algebras, their images as right coideal subalgebras,
	and their behavior under cocycle deformation. Let $K$ be a Hopf algebra, $\sigma$ a Hopf $2$-cocycle
	on $K$, and $(A,\rho)$ a right $K$-comodule algebra.
	For a morphism $\Gamma:(A,\rho)\longrightarrow(K,\Delta_K)$ in $\Alg(\Comod(K))$,
	the monoidal equivalence
	$F_\sigma:\Comod(K)\to\Comod(K^\sigma)$
	induces a morphism
	\[
	\Gamma:A_\sigma\longrightarrow F_\sigma(K)
	\quad\text{in }\Alg(\Comod(K^\sigma)).
	\]
	Writing $B=\operatorname{im}\Gamma$, one has $A_\sigma/\ker\Gamma\cong B_\sigma$
	as right $K^\sigma$-comodule algebras.
	Conversely, every injective unital algebra map
	$\Gamma:A\to K$ whose image is a right coideal
	subalgebra determines such a right $K$-comodule
	algebra structure on $A$.

	We ask when $\Gamma:A_\sigma\to K^\sigma$ is an algebra map.
	Proposition~\ref{prop:Gamma-hopf-target} shows that this happens
	if and only if
	\[
	\sigma|_{B\otimes B}=\eps_K|_B\otimes\eps_K|_B.
	\]
	
	A second question is how algebra characters behave under this
	deformation. Let $C$ be a right $L$-comodule algebra with coaction
	$\rho_C$. For every $t\in\Alg(C,k)$, the map
	$j_t=(t\otimes\id)\rho_C:C\to L$ is a morphism in
	$\Alg(\Comod(L))$, and every such morphism arises uniquely in
	this way. We refer to $j_t$ as the character-evaluation map
	associated with $t$. Its image is a right coideal subalgebra of
	$L$. For related uses of characters in specific settings, see Kolb--Stokman \cite[Section~2.6]{KoSt09} on covariantized algebras and Etingof--Walton \cite[Lemma~3.2]{EW14} on Hopf actions on commutative domains.
	
	For the deformation above, this gives a bijection
	\[
	\Alg(A_\sigma,k)
	\;\xrightarrow{\sim}\;
	\Hom_{\Alg(\Comod(K^\sigma))}
	\bigl(A_\sigma,K^\sigma\bigr).
	\]
	In particular, $\Gamma:A_\sigma\to K^\sigma$ is a morphism in
	$\Alg(\Comod(K^\sigma))$ precisely when $\eps_K\Gamma$ is a
	character of $A_\sigma$. Thus the restriction condition on
	$\sigma$ is equivalent to this character condition. More
	generally, for $r\in\Alg(A,k)$,
	Corollary~\ref{cor:deformed-characters} shows that $r$ remains
	a character of $A_\sigma$ exactly when $\sigma$ is trivial on
	$\operatorname{im}j_r\otimes\operatorname{im}j_r$.
	
	We next ask when the character-evaluation map is injective. For a
	right coideal subalgebra $B\subseteq K$ and
	$t\in\Alg(B_\sigma,k)$,
	Theorem~\ref{thm:coideal-character-injectivity} shows that
	$j_t:B_\sigma\to K^\sigma$ is injective exactly when its
	restriction to $B\cap K_0$ is injective, where $K_0$ is the
	coradical of $K$; equivalently, $t$ extends to a
	convolution-invertible functional on $K$. Cocycle deformation
	also preserves the two-sided ideals that are subcomodules. Hence,
	if $A$ has no nonzero proper such ideal, every character-evaluation
	map of $A_\sigma$ is injective, see
	Proposition~\ref{prop:stable-ideals}.
	
	These three questions are genuinely distinct. The inclusion
	$B_\sigma\to K^\sigma$ may fail to be an algebra map, characters
	may disappear under deformation, and an existing character may
	have a noninjective character-evaluation map; see
	Remarks~\ref{rem:hopf-target-closure},
	\ref{rem:twisted-characters}, and~\ref{rem:character-kernel},
	respectively.

	We also consider when two character-evaluation maps have
	the same image. For injective character-evaluation maps
	$j_t:A_\sigma\to K^\sigma$ and
	$j_u:A'_\sigma\to K^\sigma$, their images coincide
	if and only if there is an isomorphism
	$T:A_\sigma\to A'_\sigma$ in
	$\Alg(\Comod(K^\sigma))$ satisfying $u\circ T=t$.
	Such an isomorphism is unique, with
	$T=j_u^{-1}\circ j_t$, where the inverse is taken
	on the common image.
	
	Let $\pi:K\to Q$ be a surjective Hopf algebra map,
	and let $(Q,\rho),(Q,\rho')\in\Alg(\Comod(K))$ satisfy
	\[
	(\id_Q\otimes\pi)\rho
	=(\id_Q\otimes\pi)\rho'=\Delta_Q.
	\]
	Lemma~\ref{lem:quotient-character-injectivity} shows
	that every character-evaluation map of either
	deformed comodule algebra is injective.
	Lemma~\ref{lem:quotient-coaction-morphisms} further
	shows that every comodule-algebra isomorphism between
	them is uniquely of the form
	\[
	U_r(q)=r(q_1)q_2,\qquad r\in\Alg(Q,k).
	\]
	Consequently, for
	$t\in\Alg((Q,\rho)_\sigma,k)$ and
	$u\in\Alg((Q,\rho')_\sigma,k)$,
	Theorem~\ref{thm:character-evaluation-images}
	gives
	\[
	\operatorname{im}j_t^\rho
	=\operatorname{im}j_u^{\rho'}
	\iff
	\begin{cases}
		\rho'=(U_r\otimes\id_K)\rho U_{r^{-1}},\\
		u=r^{-1}*t
	\end{cases}
	\]
	for a unique $r\in\Alg(Q,k)$, where
	$r^{-1}=r\circ S_Q$ and convolution uses $\Delta_Q$.
	The same theorem describes these equalities through
	a free action of $\Alg(Q,k)^{\op}$.

	These criteria admit concrete descriptions for right coideal
	subalgebras of group algebras, in terms of group cocycles, monoid
	characters, and conjugation stability; see
	Proposition~\ref{prop:group-algebras} and
	Corollary~\ref{cor:group-MU-stability}.
	
	Our main application concerns the $\imath$-Hopf algebras of
	Chen--Lu--Pan--Ruan--Wang \cite{CLPRW}, whose construction
	motivates our Hopf-algebraic approach. Suppose that $H$ is a Hopf
	algebra with bijective antipode $S$, that $\phi:H\otimes H\to k$
	is a Hopf pairing, and that $\tau:H\to H$ is a Hopf endomorphism.
	Set $K=H\otimes H^{\cop}$. The endomorphism $\tau$ determines the
	right $K$-comodule algebra $(H,\delta_\tau)$ and the injective
	morphism in $\Alg(\Comod(K))$:
	\[
	\Gamma_\tau:(H,\delta_\tau)\hookrightarrow (K,\Delta_K),
	\qquad
	\Gamma_\tau(h)=\tau(h_2)\otimes h_1,
	\]
	while the Hopf pairing $\phi$ determines a Hopf $2$-cocycle
	$\sigma$ on $K$. We prove that
	\[
	F_\sigma(H,\delta_\tau)\cong H^\imath_\tau
	\qquad\text{in}\ \Alg(\Comod(K^\sigma)).
	\]
	In particular, the multiplication
	\[
	a*_\tau b
	=
	\phi(\tau(b_2),a_1)a_2b_1
	\]
	of $H^\imath_\tau$ is precisely the multiplication induced by the
	monoidal functor $F_\sigma$. Applying $F_\sigma$ to $\Gamma_\tau$
	then gives an injective morphism
	\[
	H^\imath_\tau\hookrightarrow F_\sigma(K)
	\qquad\text{in}\ \Alg(\Comod(K^\sigma)).
	\]
	Thus $H^\imath_\tau$ embeds as a right $K^\sigma$-comodule
	subalgebra of the Hopf bi-Galois object $F_\sigma(K)$. By
	Proposition~\ref{prop:Gamma-hopf-target}, the map
	$\Gamma_\tau:H^\imath_\tau\to K^\sigma$ is an algebra map if and
	only if $\phi(\tau(b),a)=\eps(a)\eps(b)$ for all $a,b\in H$, see
	Corollary~\ref{cor:graph-hopf-target}.
	
	Moreover, Proposition~\ref{prop:cocycle-double} gives a Hopf
	algebra isomorphism $K^\sigma\cong D(H)$. Composing with the Hopf
	algebra isomorphism
	\[
	\Phi^{\mathrm D}:D(H)\xrightarrow{\sim}(H\otimes H)^\imath
	\]
	of \cite[Proposition~2.8]{CLPRW} gives
	$K^\sigma\xrightarrow{\sim}(H\otimes H)^\imath$. By
	Lemma~\ref{lem:coaction-comparison}, under this identification
	the $K^\sigma$-coaction on $H^\imath_\tau$ coincides with the
	$(H\otimes H)^\imath$-coaction $\Psi$ in \eqref{eq:Psi}. Thus,
	under $\Phi^{\mathrm D}$, the same Hopf $2$-cocycle deformation
	recovers both the Hopf algebra structure on $(H\otimes H)^\imath$
	and its coaction on $H^\imath_\tau$. We show that
	$B_\sigma=\Gamma_\tau(H^\imath_\tau)\subseteq F_\sigma(K)$ is
	stable under the right Miyashita--Ulbrich action of $K^\sigma$ if
	and only if $\tau(H)\subseteq Z(H)$, see
	Proposition~\ref{prop:Gamma-MU-stability}.
	
	The comodule algebra $(H,\delta_\tau)$ has no nonzero proper
	two-sided ideal that is a subcomodule, see
	Lemma~\ref{lem:graph-stable-ideals}. This property is preserved by
	cocycle deformation, so every character-evaluation map of
	$H^\imath_\tau$ is injective, by
	Proposition~\ref{prop:stable-ideals}.
	
	This point of view also relates $\tau$-twisted compatible maps
	to algebra characters. We show that the nonzero $\tau$-twisted
	compatible maps $\chi:H\to k$ of
	\cite[Definition~2.9]{CLPRW} correspond to characters
	$t_\chi\in\Alg(H^\imath_\tau,k)$, see
	Theorem~\ref{thm:compatible-characters}. Under this
	correspondence, the morphism
	$\xi_{\tau,\chi}:H^\imath_\tau\to(H\otimes H)^\imath$ of
	\cite[Proposition~2.10]{CLPRW} is given by
	\[
	\xi_{\tau,\chi}=(t_\chi\otimes\id)\Psi.
	\]
	Consequently, $\xi_{\tau,\chi}$ is injective for every nonzero
	$\tau$-twisted compatible map, and its image is a right coideal
	subalgebra of $(H\otimes H)^\imath$, see
	Theorem~\ref{thm:xi-injective}.
	Proposition~\ref{prop:compatible-convolution-inverse} also gives
	explicit convolution inverses of compatible maps and linear left
	inverses of the corresponding morphisms.
	
	We further classify the $H^\imath_\tau$ up to isomorphism in
	$\Alg(\Comod(K^\sigma))$.
	Proposition~\ref{prop:ihopf-isomorphisms} shows that
	$\Alg(H,k)$ acts on the Hopf endomorphisms of $H$ by
	\[
	r\cdot\tau=r*\tau*r^{-1},
	\]
	and the isomorphism classes are precisely its orbits. For nonzero
	$\tau$- and $\upsilon$-twisted compatible maps $\chi$ and $\psi$,
	respectively, Corollary~\ref{cor:compatible-equal-images} shows
	that
	\[
	\operatorname{im}\xi_{\tau,\chi}
	=
	\operatorname{im}\xi_{\upsilon,\psi}
	\]
	if and only if there is a unique $r\in\Alg(H,k)$ satisfying
	\[
	\upsilon=r*\tau*r^{-1},
	\qquad
	\psi=\chi*r^{-1}.
	\]
	Here convolution is taken with respect to $\Delta_H$.
	
	In the Borel case of \cite[Section~4.1]{CLPRW}, the
	$\tau$-twisted compatible map $\chi$ of
	\cite[Lemma~4.2]{CLPRW} gives the algebra embedding
	$\xi_{\tau,\chi}$ of \cite[Lemma~4.3]{CLPRW},
	and $H^\imath_\tau$ is identified with the
	quasi-split universal $\imath$-quantum group in
	\cite[Theorem~4.4]{CLPRW}.
	For the same Borel Hopf algebra with its fixed
	Hopf pairing, Corollary~\ref{cor:borel-specialization}
	shows that distinct involutions of the Cartan
	matrix preserving the symmetrizer give
	nonisomorphic right $K^\sigma$-comodule algebras
	$H^\imath_\tau$, each having only the identity
	comodule-algebra automorphism.
	Moreover, for nonzero $\tau$- and
	$\upsilon$-twisted compatible maps $\chi,\psi$,
	respectively,
	\[
	\operatorname{im}\xi_{\tau,\chi}
	=\operatorname{im}\xi_{\upsilon,\psi}
	\iff
	\tau=\upsilon,\qquad \chi=\psi.
	\]
	
	The paper is organized as follows. Section~\ref{sec:cocycles} recalls basic notions of Hopf bi-Galois objects and cocycle deformations. Section~\ref{sec:coideal-deformation} studies right coideal subalgebras of the associated Hopf bi-Galois object, the behavior of algebra characters under cocycle deformation, character-evaluation maps, injectivity criteria, equality of images, and comodule subalgebras. Sections~\ref{sec:ihopf} and~\ref{sec:realization} apply these results to $\imath$-Hopf algebras. Section~\ref{sec:characters} studies $\tau$-twisted compatible maps, the resulting injective algebra maps whose images are right coideal subalgebras, equality of their images, and explicit formulas for convolution inverses.

	\paragraph{\textbf{Notation.}}
	Throughout, $k$ is a field, and all tensor products are over
	$k$. For a Hopf algebra $K$, let $\Comod(K)$ denote the monoidal
	category of right $K$-comodules and $\Alg(\Comod(K))$ its
	category of algebra objects. For algebras $R,T$, let
	$\Alg(R,T)$ be the set of unital algebra maps. Thus, for
	$B,C\in\Alg(\Comod(K))$,
	\[
	\Hom_{\Alg(\Comod(K))}(B,C)
	=
	\{f\in\Alg(B,C)\mid
	\rho_C f=(f\otimes\id_K)\rho_B\}.
	\]
	
	We use Sweedler notation, omitting summation signs:
	$\Delta(c)=c_1\otimes c_2$ and $\rho_A(a)=a_0\otimes a_1$.
	
	For a coalgebra $C$ and an algebra $R$, convolution on
	$\Hom(C,R)$ is given by
	$(f*g)(c)=f(c_1)g(c_2)$ and
	$1_{\mathrm{conv}}=\eta_R\circ\eps_C$. For a
	convolution-invertible map $f$, we write $f^{-1}$ for its
	convolution inverse; inverses of linear isomorphisms refer to
	composition. Scalar-valued maps are regarded as $R$-valued
	maps via $\eta_R:k\to R$ when taking convolution.

	\section{Hopf bi-Galois objects and cocycle deformations}
	\label{sec:cocycles}
	This section collects the background on Hopf Galois and bi-Galois
	objects and on Hopf $2$-cocycle deformations that will be used
	below. For Hopf bi-Galois objects and their monoidal equivalences,
	we refer to \cite{Sch96,Sch04}; our conventions for Hopf
	$2$-cocycles and the explicit cleft realization follow
	\cite[Section~1]{KS05}. For earlier work on Hopf Galois extensions
	and Galois-type correspondences, see
	\cite{VanOystaeyenZhang94} and references therein.

	\subsection{Hopf Galois and bi-Galois objects}
	
	For a right $K$-comodule algebra $A$ with coaction
	$\rho:A\to A\otimes K$, its coinvariants are
	\[
	A^{\operatorname{co}K}
	=
	\{a\in A\mid \rho(a)=a\otimes1\}.
	\]

	\begin{definition}
		A \emph{right $K$-Galois object} is a nonzero right
		$K$-comodule algebra $A$ such that
		$A^{\operatorname{co}K}=k1$ and the canonical map
		\[
		\can_R:A\otimes A\longrightarrow A\otimes K,
		\qquad
		a\otimes b\longmapsto ab_0\otimes b_1
		\]
		is bijective.
		
		It is \emph{cleft} if there exists a right $K$-colinear
		convolution-invertible map $K\to A$
		\cite{DT86}.
		
		Let $L$ and $K$ be Hopf algebras. An
		$(L,K)$-\emph{Hopf bi-Galois object} is an
		$L$-$K$-bicomodule algebra which is simultaneously a left
		$L$-Galois object and a right $K$-Galois object
		\cite[Definition~3.1.1]{Sch04}.
	\end{definition}
	
	For later use, if $A$ is a right $K$-Galois object, write
	\[
	\can_R^{-1}(1\otimes h)
	=
	h^{[1]}\otimes h^{[2]},
	\qquad h\in K.
	\]
	Since $A^{\operatorname{co}K}=k1$, the right
	Miyashita--Ulbrich action is defined on $A$ by
	\[
	a\leftharpoonup h
	=
	h^{[1]}ah^{[2]}.
	\]
	For the regular right $K$-Galois object $A=K$, this is the right
	adjoint action $a\leftharpoonup h=S(h_1)ah_2.$
	We use this convention below; see e.g. \cite[Section 2.1]{Sch04}.
	
	For a right $K$-comodule $V$ and a left $K$-comodule $W$, with
	coactions $\rho_V:V\to V\otimes K$ and $\lambda_W:W\to K\otimes W$,
	their cotensor product is
	\[
	V\square_K W
	=
	\ker\bigl(
	\rho_V\otimes\id_W-\id_V\otimes\lambda_W
	\bigr)
	\subseteq V\otimes W.
	\]
	
	An $(L,K)$-Hopf bi-Galois object $A$ induces, by cotensor product,
	a monoidal equivalence
	\[
	-\square_L A:
	\Comod(L)\longrightarrow\Comod(K),
	\]
	see e.g. \cite[Theorem~3.2.2]{Sch04}. For comodule algebras, the
	multiplication on the cotensor product is inherited from the
	tensor product algebra. We use below the explicit Hopf
	bi-Galois object arising from a Hopf $2$-cocycle.
	
	\subsection{Hopf \texorpdfstring{$2$}{2}-cocycle deformations}
	
	\begin{definition}
		Let $K$ be a Hopf algebra. A \emph{Hopf $2$-cocycle} on $K$
		is a normalized convolution-invertible linear map
		$\sigma:K\otimes K\to k$ satisfying
		\begin{align}
			\sigma(x_1,y_1)\sigma(x_2y_2,z)
			&=
			\sigma(y_1,z_1)\sigma(x,y_2z_2),
			\label{eq:cocycle}\\
			\sigma(1,x)&=\sigma(x,1)=\eps(x).
		\end{align}
	\end{definition}
	
	Thus
	\[
	\sigma(x_1,y_1)\sigma^{-1}(x_2,y_2)
	=
	\sigma^{-1}(x_1,y_1)\sigma(x_2,y_2)
	=
	\eps(x)\eps(y).
	\]
	
	\begin{proposition}[Hopf $2$-cocycle deformation]
		\label{prop:transport}
		Let $\sigma$ be a Hopf $2$-cocycle on $K$.
		
		The vector space $K$, with its original coalgebra structure and
		multiplication
		\begin{equation}\label{eq:hopf-product}
			x\star_\sigma y
			=
			\sigma(x_1,y_1)x_2y_2\sigma^{-1}(x_3,y_3),
		\end{equation}
		is a Hopf algebra, denoted by $K^\sigma$. We call $K^\sigma$
		the \emph{Hopf $2$-cocycle deformation} of $K$.
		
		If $B$ is a right $K$-comodule algebra with coaction
		$\rho_B:B\to B\otimes K$, written $\rho_B(b)=b_0\otimes b_1$,
		then $B$, with the same coaction
		and multiplication
		\begin{equation}\label{eq:transport}
			b\cdot_\sigma c
			=
			b_0c_0\sigma^{-1}(b_1,c_1),
		\end{equation}
		is a right $K^\sigma$-comodule algebra, denoted by $B_\sigma$. We call $B_\sigma$ the \emph{comodule-algebra deformation} of $B$.
	\end{proposition}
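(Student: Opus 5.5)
The plan is to verify the two assertions directly from the cocycle identity \eqref{eq:cocycle}, the normalization, and the analogous identities for $\sigma^{-1}$. The first fact needed is that $\sigma^{-1}$ satisfies the ``inverse'' cocycle identity
\[
\sigma^{-1}(x_1y_1,z)\sigma^{-1}(x_2,y_2)=\sigma^{-1}(x,y_1z_1)\sigma^{-1}(y_2,z_2).
\]
To obtain it, I will use that both sides of \eqref{eq:cocycle} are convolution-invertible maps on $K^{\otimes 3}$ and invert each side. The inverse of $x\otimes y\otimes z\mapsto\sigma(x_1,y_1)\sigma(x_2y_2,z)$ is $\sigma^{-1}(x_1y_1,z)\sigma^{-1}(x_2,y_2)$, because convolution inverts products in reversed order. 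Normalization of $\sigma^{-1}$ then follows from normalization of $\sigma$ by uniqueness of inverses.

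For the comodule algebra $B_\sigma$, I will proceed as follows.
\begin{itemize}
\item \emph{Unit.} That $1$ is a unit follows from $\rho_B(1)=1\otimes1$ and normalization of $\sigma^{-1}$.
\item \emph{Colinearity.} The coaction is colinear for the new structures:
\[
\rho_B(b\cdot_\sigma c)=b_0c_0\otimes b_1c_1\,\sigma^{-1}(b_2,c_2).
\]
We also have $(b_0\cdot_\sigma c_0)\otimes b_1\star_\sigma c_1=b_0c_0\otimes\sigma^{-1}(b_1,c_1)\sigma(b_2,c_2)b_3c_3\sigma^{-1}(b_4,c_4)$. The middle factors cancel, so the two expressions agree.
\item \emph{Associativity.} Expanding $(b\cdot_\sigma c)\cdot_\sigma d$ and $b\cdot_\sigma(c\cdot_\sigma d)$ with coassociativity of $\rho_B$ reduces associativity exactly to the inverse cocycle identity above.
\end{itemize}

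For $K^\sigma$, the same computation with $B=K$ shows that $x\cdot y=x_1y_1\sigma^{-1}(x_2,y_2)$ is associative. Similarly, the left version $\sigma(x_1,y_1)x_2y_2$ is associative by \eqref{eq:cocycle} itself. The two twists act on opposite legs and commute, so $\star_\sigma$ is associative. More precisely, I will expand both bracketings and apply \eqref{eq:cocycle} on the left factors and the inverse identity on the right factors. These are independent because the middle terms are just the product $x_ky_kz_k$.

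The counit is multiplicative because $\eps(x\star_\sigma y)=\sigma(x_1,y_1)\sigma^{-1}(x_2,y_2)=\eps(x)\eps(y)$. For multiplicativity of $\Delta$:
\begin{itemize}
\item $\Delta(x\star_\sigma y)=\sigma(x_1,y_1)x_2y_2\otimes x_3y_3\sigma^{-1}(x_4,y_4)$;
\item $x_1\star_\sigma y_1\otimes x_2\star_\sigma y_2$ has the extra factor $\sigma^{-1}(x_3,y_3)\sigma(x_4,y_4)$ in the middle, which collapses to $\eps$.
\end{itemize}
Hence $K^\sigma$ is a bialgebra.

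The main obstacle is the antipode. The plan is to write down the standard Doi formula
\[
S^\sigma(x)=\sigma(x_1,S(x_2))\,S(x_3)\,\sigma^{-1}(S(x_4),x_5),
\]
and verify $x_1\star_\sigma S^\sigma(x_2)=\eps(x)1$. The key lemma is $\sigma(x_1,S(x_2))\sigma^{-1}(S(x_3),x_4)=\eps(x)$, or a convenient variant of it. This comes from applying \eqref{eq:cocycle} with $(x,y,z)=(x_1,S(x_2),x_3)$ and using $x_2S(x_3)=\eps$. I expect this bookkeeping with many Sweedler indices to be the most delicate part. As an alternative, one can avoid it by citing that a bialgebra with a cleft bi-Galois structure is Hopf, i.e.\ \cite[Section~1]{KS05} or Doi's original argument.
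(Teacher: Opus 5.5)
Your proposal is correct, but it takes a different route from the paper. The paper gives no argument of its own: it cites the standard formulas of \cite[Section~1]{KS05} (Doi's theorem), whereas you verify every axiom directly.

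The following parts of your plan are all sound:
\begin{itemize}
\item the inverted identity $\sigma^{-1}(x_1y_1,z)\sigma^{-1}(x_2,y_2)=\sigma^{-1}(x,y_1z_1)\sigma^{-1}(y_2,z_2)$, obtained by inverting both sides of \eqref{eq:cocycle} in $(K^{\otimes 3})^*$ (precomposition with the coalgebra maps $m\otimes\id$ and $\id\otimes m$ preserves convolution inverses);
\item normalization of $\sigma^{-1}$;
\item unitality, colinearity and associativity of $B_\sigma$;
\item associativity, counit and comultiplicativity for $\star_\sigma$.
\end{itemize}
The direct route makes the statement self-contained and shows that only normalization, \eqref{eq:cocycle}, and $\sigma*\sigma^{-1}=\sigma^{-1}*\sigma=\eps\otimes\eps$ are used. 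The citation buys brevity.

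Two details in the antipode step need tightening.

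First, checking only $x_1\star_\sigma S^\sigma(x_2)=\eps(x)1$ is not enough. In a general bialgebra a one-sided convolution inverse of $\id$ need not be two-sided, so you also need $S^\sigma(x_1)\star_\sigma x_2=\eps(x)1$. Both are easy once you write $S^\sigma=u*S*v$, with $u(x)=\sigma(x_1,S(x_2))$ and $v(x)=\sigma^{-1}(S(x_1),x_2)$. In each product, one factor of the form $\sigma^{-1}(a_1,b_1)\sigma(a_2,b_2)$ collapses to $\eps(a)\eps(b)$. An antipode contraction then leaves $(u*v)(x)1$ on both sides.

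Second, the key lemma $u*v=\eps$ does not come out of \eqref{eq:cocycle} applied literally at $(x_1,S(x_2),x_3)$. That only yields $u(x)=\sigma(S(x_3),x_4)\,\sigma(x_1,S(x_2)x_5)$. Instead, first convolve \eqref{eq:cocycle} on the left with $\eps\otimes\sigma^{-1}$ to obtain
\[
\sigma(x,yz)=\sigma^{-1}(y_1,z_1)\,\sigma(x_1,y_2)\,\sigma(x_2y_3,z_2),
\]
and only then specialize to $(x_1,S(x_2),x_3)$. The left side becomes $\eps(x)$. After contracting $x_2S(x_3)$, the right side is exactly $\sigma(x_1,S(x_2))\,\sigma^{-1}(S(x_3),x_4)=(u*v)(x)$.

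The order matters. The map $x\mapsto x_1\otimes S(x_2)\otimes x_3$ is not a coalgebra map, so convolution identities on $K^{\otimes 3}$ cannot be transported along it after specializing.
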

	
	\begin{proof}
		These are the standard cocycle-deformation formulas; see e.g. 
		\cite[Section~1, equations~(1.1)--(1.3)]{KS05}.
	\end{proof}
	
	Since $K$ and $K^\sigma$ have the same coalgebra, the identity
	on underlying vector spaces, coactions, and linear maps defines a
	monoidal equivalence
	\[
	F_\sigma:\Comod(K)\longrightarrow\Comod(K^\sigma),
	\]
	with tensor constraint on right $K$-comodules $V,W$
	\begin{equation}\label{eq:tensorator}
		F_\sigma(V)\otimes F_\sigma(W)
		\longrightarrow
		F_\sigma(V\otimes W),
		\qquad
		v\otimes w\longmapsto v_0\otimes w_0\sigma^{-1}(v_1,w_1).
	\end{equation}
	The induced equivalence
	$F_\sigma:\Alg(\Comod(K))\longrightarrow\Alg(\Comod(K^\sigma))$
	satisfies $F_\sigma(B)=B_\sigma$.
	
	The cocycle identity gives the monoidal coherence of
	\eqref{eq:tensorator}. The convolution inverse $\sigma^{-1}$ is a
	Hopf $2$-cocycle on $K^\sigma$, and the corresponding deformation
	recovers $K$ and the original comodule-algebra multiplication;
	see e.g. \cite[Section~1]{KS05}; thus the inverse equivalence is
	\[
	F_{\sigma^{-1}}:
	\Comod(K^\sigma)\longrightarrow\Comod(K).
	\]
	
	The Hopf algebra $K^\sigma$ has the two-sided multiplication
	\eqref{eq:hopf-product}, whereas $B_\sigma$ has the one-sided
	multiplication \eqref{eq:transport}. In particular, regarding $K$
	as its regular right $K$-comodule algebra gives the algebra
	underlying the associated Hopf bi-Galois object:
	\begin{equation}\label{eq:regular-deformation}
		F_\sigma(K)=K_{\sigma}:
		\qquad
		x\cdot_\sigma y
		=
		x_1y_1\sigma^{-1}(x_2,y_2).
	\end{equation}
	
	For left comodules, the corresponding multiplication uses $\sigma$
	on the left coaction coefficients; see e.g. \cite[Section~1.3]{GKM12} and
	\cite[Section~2.1]{Mombelli10}.
	
	\subsection{The associated Hopf bi-Galois object}
	
	\begin{proposition}\label{prop:bigalois}
		Let $\sigma$ be a Hopf $2$-cocycle on $K$.
		The algebra $F_\sigma(K)$ of
		\eqref{eq:regular-deformation} is a
		$(K,K^\sigma)$-Hopf bi-Galois object. Its left and right
		coactions are induced by the coproduct:
		\[
		\lambda(x)=x_1\otimes x_2
		\in K\otimes F_\sigma(K),
		\qquad
		\rho(x)=x_1\otimes x_2
		\in F_\sigma(K)\otimes K^\sigma.
		\]
	\end{proposition}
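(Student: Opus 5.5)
We verify the claim directly, structure by structure: the algebra $F_\sigma(K)$ with product $x\cdot_\sigma y=x_1y_1\sigma^{-1}(x_2,y_2)$, the two coactions given by $\Delta$, and the two Galois conditions. First, $F_\sigma(K)$ is an associative unital algebra and $\rho=\Delta$ makes it a right $K^\sigma$-comodule algebra. Both facts are the special case $B=K$ (regular comodule algebra) of Proposition~\ref{prop:transport}. Associativity there is equivalent to the cocycle identity \eqref{eq:cocycle} for $\sigma^{-1}$ on $K^\sigma$.

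Next, $\lambda=\Delta$ makes $F_\sigma(K)$ a left $K$-comodule algebra:
\[
\lambda(x\cdot_\sigma y)=x_1y_1\otimes x_2y_2\sigma^{-1}(x_3,y_3)=\lambda(x)\lambda(y)
\]
in $K\otimes F_\sigma(K)$. Here the $\sigma^{-1}$-twist sits only on the right-hand tensor factor, so the left $K$-factor keeps the undeformed product. The bicomodule condition $(\lambda\otimes\id)\rho=(\id\otimes\rho)\lambda$ is coassociativity.

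For the right Galois property, the coinvariants are $\{x: x_1\otimes x_2=x\otimes 1\}=k1$ by applying $\eps\otimes\id$. We then write down an explicit inverse of
\[
\can_R(x\otimes y)=x\cdot_\sigma y_1\otimes y_2=x_1y_1\sigma^{-1}(x_2,y_2)\otimes y_3 .
\]
The candidate is
\[
x\otimes h\mapsto \sigma(x_2,S(h_2)_1)\,x_1S(h_1)\cdots .
\]
Rather than grinding through this formula, the cleaner route is cleftness: we will show that $\gamma=\id:K\to F_\sigma(K)$ is right colinear and convolution invertible, with inverse
\[
\gamma^{-1}(h)=\sigma(h_1,S(h_2))\,S(h_3)
\]
(or the analogous formula built from $\sigma(S(h_1),h_2)$, whichever one the check confirms). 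Then we invoke the standard fact that a comodule algebra with a convolution-invertible colinear map from $K^\sigma$ and trivial coinvariants is a cleft, hence Galois, object \cite{DT86}.

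The left $K$-Galois property is handled symmetrically. The left canonical map is $x\otimes y\mapsto x_1\otimes x_2\cdot_\sigma y$, and it is bijective because $F_\sigma(K)$ is in fact the undeformed Hopf algebra $K$ with a twisted right-hand product. Alternatively, we can use the fact that for a right Galois object the left Galois structure over the Schauenburg Hopf algebra $L(A,K^\sigma)$ always exists \cite{Sch96}, and identify $L\cong K$ via $\lambda$.

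The main obstacle is verifying the convolution inverse of $\gamma$ in the deformed product. This requires the normalization of $\sigma$ and the cocycle identity applied to triples such as $(h_1,S(h_2),h_3)$. I would first try the formula $\gamma^{-1}(h)=\sigma(h_1,S(h_2))S(h_3)$ and check $\gamma*\gamma^{-1}=\eps$. If that check becomes unwieldy, the fallback is simply to cite \cite[Proposition~3.1.6]{Sch04}, where this statement is proved in full.
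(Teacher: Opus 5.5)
Your handling of the comodule-algebra structures, the bicomodule condition and the coinvariants is fine. Your remark that associativity amounts to the cocycle condition for $\sigma^{-1}$ on $K^\sigma$ is also correct.

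The gap is that neither Galois condition is proved, and these are the actual content of the proposition beyond Proposition~\ref{prop:transport}. For the right one, the formula for $\can_R^{-1}$ is left unfinished, you hedge between two candidates for $\gamma^{-1}$, and you only plan to check $\gamma*\gamma^{-1}=\eps$. That is the easy side: for $\gamma^{-1}(h)=\sigma(h_1,S(h_2))S(h_3)$ it needs nothing beyond $\sigma^{-1}*\sigma=\eps\otimes\eps$. Cleftness needs a two-sided inverse, and so does the inverse $x\otimes h\mapsto x\cdot_\sigma\gamma^{-1}(h_1)\otimes h_2$ of $\can_R$. The other side, $\gamma^{-1}*\gamma=\eps$, reduces to
\[
\sigma(h_1,S(h_2))\,\sigma^{-1}(S(h_3),h_4)=\eps(h),
\]
which genuinely uses the cocycle identity. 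For instance, rewrite \eqref{eq:cocycle} as $\sigma(x,yz)=\sigma^{-1}(y_1,z_1)\sigma(x_1,y_2)\sigma(x_2y_3,z_2)$ and evaluate it on $h_1\otimes S(h_2)\otimes h_3$. So your first guess is the right one, but the step that makes it work is missing.

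For the left Galois map, the sentence about $F_\sigma(K)$ being the undeformed Hopf algebra with a twisted product is not an argument. The detour through $L(F_\sigma(K),K^\sigma)$ does not help either: identifying it with $K$ via $\lambda$ is equivalent to the left Galois property you want.

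Both gaps close without any antipode computation. Put
\[
T(x\otimes y)=x_1\otimes y_1\sigma^{-1}(x_2,y_2),\qquad T'(x\otimes y)=\sigma^{-1}(x_1,y_1)\,x_2\otimes y_2.
\]
These are bijective, with inverses given by the same formulas with $\sigma$ in place of $\sigma^{-1}$. Then
\[
\can_L=c_K\circ T,\qquad \can_R=c'_{K^\sigma}\circ T',
\]
where $c_K(x\otimes y)=x_1\otimes x_2y$ and $c'_{K^\sigma}(x\otimes y)=x\star_\sigma y_1\otimes y_2$ are the canonical maps of the Hopf algebras $K$ and $K^\sigma$, hence bijective. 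The second factorization rests on the identity $x\cdot_\sigma y=\sigma^{-1}(x_1,y_1)\,x_2\star_\sigma y_2$.

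That identity is exactly the computation in the paper's proof, which is your citation fallback carried out properly. The cited result of Schauenburg is applied to $(K^\sigma,\sigma^{-1})$ and produces the one-sided twist $\sigma^{-1}(x_1,y_1)\,x_2\star_\sigma y_2$ of $K^\sigma$. One must still check that this equals $x_1y_1\sigma^{-1}(x_2,y_2)$, so the statement is not literally ``proved there in full.''
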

	
	\begin{proof}
		The convolution inverse $\sigma^{-1}$ is a Hopf $2$-cocycle
		on $K^\sigma$. Applying
		\cite[Proposition~3.1.6]{Sch04} to
		$(K^\sigma,\sigma^{-1})$ gives a
		$(K,K^\sigma)$-Hopf bi-Galois object. Its one-sided
		multiplication is
		\[
		\begin{aligned}
			\sigma^{-1}(x_1,y_1)(x_2\star_\sigma y_2)
			&=
			\sigma^{-1}(x_1,y_1)\sigma(x_2,y_2)
			x_3y_3\sigma^{-1}(x_4,y_4)\\
			&=
			x_1y_1\sigma^{-1}(x_2,y_2),
		\end{aligned}
		\]
		where the last equality uses $\sigma^{-1}*\sigma=\eps_K\otimes\eps_K$.
		Thus the algebra is precisely $F_\sigma(K)$.
	\end{proof}
	
	The crossed-product description in the proof shows that
	$F_\sigma(K)$ is cleft as a right $K^\sigma$-Galois object.
	The cotensor functor $-\square_KF_\sigma(K)$ realizes the monoidal
	equivalence $F_\sigma$; see \cite[Theorem~3.2.2]{Sch04}.
	For every right $K$-comodule $V$, the map
	\[
	F_\sigma(V)\xrightarrow{\sim}V\square_KF_\sigma(K),
	\qquad v\longmapsto v_0\otimes v_1,
	\]
	is a natural isomorphism of right $K^\sigma$-comodules, with inverse
	$\id\otimes\eps_K$ by coassociativity and the counit identities.
	The tensor constraint induced by multiplication in $F_\sigma(K)$
	agrees with \eqref{eq:tensorator} under these identifications.
	For a right $K$-comodule algebra $B$, this gives $B_\sigma\cong B\square_KF_\sigma(K)$
	as right $K^\sigma$-comodule algebras: applying $\id\otimes\eps_K$
	to the inherited product gives
	\[
	b_0c_0\,\eps_K(b_1\cdot_\sigma c_1)
	=b_0c_0\sigma^{-1}(b_1,c_1).
	\]

	\section{Right coideal subalgebras and Hopf bi-Galois objects}
	\label{sec:coideal-deformation}
	In this section, we study comodule-algebra morphisms into Hopf algebras, their images as right coideal subalgebras, and the behavior of character-evaluation maps under Hopf $2$-cocycle deformation. Suppose that $K$ is a Hopf algebra with coradical $K_0$, $\sigma$ is a
	normalized convolution-invertible Hopf $2$-cocycle on $K$, and
	$(A,\rho)$ is  a right $K$-comodule algebra. Let
	$\Gamma:(A,\rho)\to(K,\Delta_K)$ be a morphism in
	$\Alg(\Comod(K))$. Set $B=\operatorname{im}\Gamma$ and
	$I=\ker\Gamma$. 
	\begin{lemma}\label{lem:deformation-ideals}
		Let $J\subseteq A$ be a right $K$-subcomodule.
		Then $J$ is a two-sided ideal of $A$ if and only if
		it is a two-sided ideal of $A_\sigma$. In this case,
		there is an isomorphism
		\[
		(A/J)_\sigma\cong A_\sigma/J
		\]
		of right $K^\sigma$-comodule algebras.
	\end{lemma}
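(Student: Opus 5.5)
The plan is to compare the two products through the deformation formula \eqref{eq:transport} and its inverse. Since $J$ is a right $K$-subcomodule, we have $\rho(J)\subseteq J\otimes K$. Suppose first that $J$ is a two-sided ideal of $A$. For $a\in A$ and $j\in J$, the product
\[
a\cdot_\sigma j=a_0j_0\sigma^{-1}(a_1,j_1)
\]
is a sum of terms $a_0j_0$ with $j_0\in J$, so it lies in $J$; the same argument gives $j\cdot_\sigma a\in J$. For the converse, recall from Section~\ref{sec:cocycles} that $\sigma^{-1}$ is a Hopf $2$-cocycle on $K^\sigma$ and that $(A_\sigma)_{\sigma^{-1}}=A$ with the original multiplication. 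Since $J$ is also a right $K^\sigma$-subcomodule of $A_\sigma$ (the coaction is unchanged), the same computation applied to $(A_\sigma,\sigma^{-1})$ shows that an ideal of $A_\sigma$ is an ideal of $A$. This settles the equivalence.

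For the isomorphism, let $q:A\to A/J$ be the quotient map. Because $J$ is a subcomodule and an ideal, $A/J$ is a right $K$-comodule algebra and $q$ is a morphism in $\Alg(\Comod(K))$. Applying the functor $F_\sigma$, or checking directly that
\[
q(a\cdot_\sigma b)=q(a_0)q(b_0)\sigma^{-1}(a_1,b_1)=q(a)\cdot_\sigma q(b),
\]
using $\rho_{A/J}q=(q\otimes\id)\rho$, shows that $q:A_\sigma\to(A/J)_\sigma$ is a surjective morphism of right $K^\sigma$-comodule algebras, and it is unital. Its kernel is $J$, which by the first part is an ideal of $A_\sigma$. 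Hence $q$ induces a bijective algebra map $A_\sigma/J\to(A/J)_\sigma$. This map is colinear, because the coaction on $A_\sigma/J$ is the one induced from $A$. It is therefore an isomorphism in $\Alg(\Comod(K^\sigma))$.

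The only point needing care is the converse direction, where we invoke that $\sigma^{-1}$-deformation inverts $\sigma$-deformation on comodule algebras, as recorded after Proposition~\ref{prop:transport}. We also need to note that the quotient coaction on $A/J$ is well defined, which holds because $J$ is a subcomodule. Everything else is routine.
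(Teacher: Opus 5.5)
Your proposal is correct and follows essentially the same route as the paper. You get the forward direction from $\rho(J)\subseteq J\otimes K$, the converse from the inverse deformation by $\sigma^{-1}$, and the isomorphism by applying $F_\sigma$ to the quotient map $A\to A/J$, which is a surjective morphism with kernel $J$.
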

	
	\begin{proof}
		If $J$ is an ideal of $A$, the condition
		$\rho(J)\subseteq J\otimes K$ gives
		\[
		A\cdot_\sigma J\subseteq J,
		\qquad
		J\cdot_\sigma A\subseteq J.
		\]
		The converse follows by applying the inverse
		deformation $F_{\sigma^{-1}}$.
		Applying $F_\sigma$ to the quotient morphism
		$A\to A/J$ yields a surjective morphism with
		kernel $J$, and hence the stated isomorphism.
		See also \cite[proof of Lemma~3.1]{GKM12}
		for the left-comodule case.
	\end{proof}
	
	\begin{lemma}\label{lem:coideal-deformation}
		The image $B$ is a unital right coideal subalgebra
		of $K$, and $I$ is a right $K$-subcomodule ideal
		of $A$. Moreover,
		\[
		\Gamma:A_\sigma\longrightarrow F_\sigma(K)
		\]
		is a morphism in $\Alg(\Comod(K^\sigma))$
		with kernel $I$ and image $B_\sigma$, inducing
		an isomorphism
		\[
		A_\sigma/I\xrightarrow{\sim}B_\sigma,
		\qquad a+I\longmapsto\Gamma(a),
		\]
		of right $K^\sigma$-comodule algebras.
	\end{lemma}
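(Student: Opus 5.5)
The plan is to derive the first two assertions directly from the fact that $\Gamma$ is a morphism in $\Alg(\Comod(K))$, and then obtain the rest by functoriality of $F_\sigma$ together with Lemma~\ref{lem:deformation-ideals}.

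\emph{Undeformed statements.} Since $\Gamma$ is a unital algebra map, $B=\operatorname{im}\Gamma$ is a unital subalgebra of $K$ and $I=\ker\Gamma$ is a two-sided ideal of $A$. Colinearity reads $\Delta_K\Gamma=(\Gamma\otimes\id_K)\rho$. Applying it to $a\in A$ gives
\[
\Delta_K(\Gamma(a))=\Gamma(a_0)\otimes a_1\in B\otimes K,
\]
so $B$ is a right coideal. For $a\in I$ we get $(\Gamma\otimes\id)\rho(a)=0$, so $\rho(a)\in\ker(\Gamma\otimes\id_K)=I\otimes K$. This last equality holds because tensoring over a field is exact. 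Hence $I$ is a subcomodule.

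\emph{The deformed map.} Since $F_\sigma$ is the identity on underlying spaces and linear maps, $F_\sigma(\Gamma)=\Gamma:A_\sigma\to F_\sigma(K)=K_\sigma$, and it is automatically $K^\sigma$-colinear. I will nonetheless check multiplicativity directly, because it is the only real computation. Using $\Gamma(a)_1\otimes\Gamma(a)_2=\Gamma(a_0)\otimes a_1$, we get
\[
\Gamma(a\cdot_\sigma b)=\Gamma(a_0)\Gamma(b_0)\sigma^{-1}(a_1,b_1)
=\Gamma(a)_1\Gamma(b)_1\sigma^{-1}(\Gamma(a)_2,\Gamma(b)_2)=\Gamma(a)\cdot_\sigma\Gamma(b),
\]
where the last step is \eqref{eq:regular-deformation}. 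Unitality is unchanged, since $1$ is the unit in both deformed algebras.

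\emph{Kernel, image, and the induced isomorphism.} The kernel and image are unchanged as subspaces: the kernel is still $I$ and the image is still $B$. By Lemma~\ref{lem:deformation-ideals}, $I$ is an ideal of $A_\sigma$. The subspace $B$, being a right subcomodule of $K$ closed under the product of $K$, is also closed under $\cdot_\sigma$: indeed $x_1y_1\sigma^{-1}(x_2,y_2)$ is a combination of elements $x'y'$ with $x',y'\in B$, since $\Delta(B)\subseteq B\otimes K$. With the restricted coaction and product this subcomodule algebra is exactly $B_\sigma$. The first isomorphism theorem for algebras then gives a bijective algebra map $A_\sigma/I\to B_\sigma$. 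It is colinear because $\Gamma$ is, so it is an isomorphism of right $K^\sigma$-comodule algebras.

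The only mild obstacle is identifying the image with $B_\sigma$ as a comodule algebra, that is, confirming that the subalgebra structure of $B\subseteq K_\sigma$ agrees with the deformation of $B$ as a right $K$-comodule algebra. This holds because both products are given by the same formula \eqref{eq:transport}, restricted from $K$.
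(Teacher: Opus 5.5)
Your proposal is correct and follows the paper's proof: you use the same computation $\Gamma(a\cdot_\sigma b)=\Gamma(a)_1\Gamma(b)_1\sigma^{-1}(\Gamma(a)_2,\Gamma(b)_2)$, Lemma~\ref{lem:deformation-ideals} to see that $I$ is an ideal of $A_\sigma$, and the fact that $F_\sigma$ leaves underlying maps, kernels and images unchanged. You additionally spell out details the paper leaves implicit, namely $\ker(\Gamma\otimes\id_K)=I\otimes K$ for the subcomodule claim and the identification of the image with $B_\sigma$.
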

	\begin{proof}
		Since $\Gamma$ is a unital colinear algebra map,
		$B$ is a right coideal subalgebra of $K$, and
		$I$ is a right $K$-subcomodule ideal of $A$.
		By Lemma~\ref{lem:deformation-ideals},
		$I$ is also an ideal of $A_\sigma$.
		For $a,b\in A$,
		\begin{equation}\label{eq:deformed-Gamma}
			\Gamma(a\cdot_\sigma b)
			=
			\Gamma(a_0)\Gamma(b_0)\,\sigma^{-1}(a_1,b_1)
			=
			\Gamma(a)_1\Gamma(b)_1\,
			\sigma^{-1}\bigl(\Gamma(a)_2,\Gamma(b)_2\bigr),
		\end{equation}
		so $\Gamma:A_\sigma\to F_\sigma(K)$ is an
		algebra map. Since $F_\sigma$ preserves
		the underlying linear maps and coactions,
		its kernel and image are $I$ and $B_\sigma$,
		respectively. The asserted isomorphism follows.
	\end{proof}
	
	\begin{proposition}
		\label{prop:Gamma-hopf-target}
		With the notation of Lemma~\ref{lem:coideal-deformation},
		$\Gamma:A_\sigma\to K^\sigma$ is an algebra map if and only if
		\begin{equation}\label{eq:Gamma-hopf-target}
			\sigma|_{B\otimes B}
			=
			\eps_K|_B\otimes\eps_K|_B.
		\end{equation}
	\end{proposition}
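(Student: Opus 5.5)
By \eqref{eq:deformed-Gamma}, $\Gamma$ is already an algebra map $A_\sigma\to F_\sigma(K)$. The plan is therefore to compare the two products $x\cdot_\sigma y$ and $x\star_\sigma y$ on elements $x,y\in B$. For $a,b\in A$ put $x=\Gamma(a)$ and $y=\Gamma(b)$. Then $\Gamma(a\cdot_\sigma b)=x_1y_1\sigma^{-1}(x_2,y_2)$. So $\Gamma:A_\sigma\to K^\sigma$ is multiplicative if and only if
\[
x_1y_1\sigma^{-1}(x_2,y_2)=\sigma(x_1,y_1)x_2y_2\sigma^{-1}(x_3,y_3)\qquad\text{for all }x,y\in B.
\]
Unitality holds automatically, because $\Gamma(1)=1$ and both deformed products keep the same unit, $\sigma$ being normalized.

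For sufficiency, assume \eqref{eq:Gamma-hopf-target}. Since $B$ is a right coideal, $x_1\otimes x_2\in B\otimes K$, and likewise for $y$. Hence $\sigma(x_1,y_1)=\eps(x_1)\eps(y_1)$, and the right-hand side collapses to $x_1y_1\sigma^{-1}(x_2,y_2)$, which is the left-hand side.

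For necessity, assume the displayed identity holds for all $x,y\in B$ and apply $\eps_K$ to both sides. The left side gives $\sigma^{-1}(x,y)$. The right side gives $\sigma(x_1,y_1)\sigma^{-1}(x_2,y_2)=\eps(x)\eps(y)$. Hence $\sigma^{-1}|_{B\otimes B}=\eps\otimes\eps$.

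It remains to pass from $\sigma^{-1}$ to $\sigma$; this is the one step needing care. Because $B$ is a right coideal, the restriction of a convolution product to $B\otimes B$ only involves left tensor legs lying in $B\otimes B$. Concretely, for $x,y\in B$,
\[
\eps(x)\eps(y)=(\sigma*\sigma^{-1})(x,y)=\sigma(x_1,y_1)\sigma^{-1}(x_2,y_2).
\]
Here the second legs $x_2,y_2$ lie in $K$, not necessarily in $B$, so the hypothesis on $\sigma^{-1}|_{B\otimes B}$ cannot be used directly. I will instead use the other order:
\[
\eps(x)\eps(y)=(\sigma^{-1}*\sigma)(x,y)=\sigma^{-1}(x_1,y_1)\sigma(x_2,y_2).
\]
This has the same defect. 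The fix is to apply the hypothesis at the level of the identity before taking $\eps$. In $x_1y_1\sigma^{-1}(x_2,y_2)=\sigma(x_1,y_1)x_2y_2\sigma^{-1}(x_3,y_3)$, replace $x,y$ by the elements $x_1,y_1\in B$, i.e.\ use the identity for all pairs drawn from the left legs. Tensoring, and then applying $\eps$ to the middle factor, is messy, so I will proceed differently.

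Apply $\id\otimes\eps$ differently: the identity holds as elements of $K$, and $\Delta$ is unchanged. Apply $\Delta$, then $\id\otimes\eps$ on the first factor together with $\sigma$-pairing. The simplest route is the observation above: $\sigma^{-1}|_{B\otimes B}=\eps\otimes\eps$ means precisely that the inclusion $B_{\sigma}\to F_{\sigma}(K)$ lands with products $x\cdot_\sigma y=xy$ for $x,y\in B$. Then the displayed identity reads $xy=x\star_\sigma y$ on $B$. Now $\sigma^{-1}$ is a Hopf $2$-cocycle on $K^\sigma$ with $(K^\sigma)^{\sigma^{-1}}=K$. The identical $\eps$-argument, applied to $B\subseteq K^\sigma$ (still a right coideal, with the same coalgebra) and the cocycle $\sigma^{-1}$, uses $x\star_\sigma y=xy$, i.e.\ that the inclusion is multiplicative from $(B,\star_\sigma)$ to $K$, which is the deformation of $K^\sigma$ by $\sigma^{-1}$. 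This yields $(\sigma^{-1})^{-1}|_{B\otimes B}=\sigma|_{B\otimes B}=\eps\otimes\eps$.

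I expect this final symmetry step to be the main obstacle. Specifically, one must check that $x_1y_1\sigma(x_2,y_2)$ computed in $K^\sigma$ equals the $\sigma^{-1}$-deformed regular product. This follows from $F_{\sigma^{-1}}F_\sigma=\id$ as recalled after Proposition~\ref{prop:transport}.
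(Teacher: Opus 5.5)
Your reduction to the identity $x\cdot_\sigma y=x\star_\sigma y$ on $B$ is correct and coincides with the paper's proof. So are the sufficiency direction and the derivation of $\sigma^{-1}|_{B\otimes B}=\eps_K\otimes\eps_K$ by applying $\eps_K$. The gap is the passage from $\sigma^{-1}$ to $\sigma$. You reject
\[
\eps(x)\eps(y)=\sigma^{-1}(x_1,y_1)\sigma(x_2,y_2)
\]
as having ``the same defect'', but it does not. Here $\sigma^{-1}$ is evaluated on the left legs $x_1,y_1$, which lie in $B$ because $B$ is a right coideal. So the restriction you have just proved applies and gives $\eps(x)\eps(y)=\eps(x_1)\eps(y_1)\sigma(x_2,y_2)=\sigma(x,y)$. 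That single line is exactly how the paper concludes.

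The detour you substitute is not valid. The claim that $\sigma^{-1}|_{B\otimes B}=\eps\otimes\eps$ forces $x\cdot_\sigma y=xy$ on $B$ has precisely the defect you diagnosed: in $x_1y_1\sigma^{-1}(x_2,y_2)$ the cocycle sees the right legs. The claim is false even under the necessity hypothesis. For a counterexample:
\begin{enumerate}[label=(\alph*)]
\item Let $H=k[g^{\pm1},e]$ be commutative, with $g$ group-like and $\Delta(e)=e\otimes1+g\otimes e$.
\item Let $\phi$ be the pullback along $e\mapsto0$ of a bicharacter with $\phi(g,g)=\lambda\neq1$.
\item Let $\sigma$ be \eqref{eq:sigma} on $K=H\otimes H^{\cop}$, and let $B=k[g^{\pm1}]\otimes k[e]$, which is a right coideal subalgebra.
\end{enumerate}
Then $\sigma$ and $\sigma^{-1}$ are trivial on $B\otimes B$, so the inclusion $B_\sigma\to K^\sigma$ is an algebra map. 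Yet
\[
(1\otimes e)\cdot_\sigma(g\otimes1)=(1\otimes e)\star_\sigma(g\otimes1)=\lambda\,g\otimes e\neq g\otimes e.
\]
Hence the identity $xy=x\star_\sigma y$ on $B$, on which your symmetry argument rests, fails. What the hypothesis does give, by applying it to the left legs, is $(x_1\star_\sigma y_1)\sigma(x_2,y_2)=xy$ for $x,y\in B$. Feeding that into your $\eps$-argument for $(K^\sigma,\sigma^{-1})$ would work, but it is just the one-line computation above in disguise.
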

	\begin{proof}
		Suppose that $\Gamma:A_\sigma\to K^\sigma$ is an algebra map. By
		\eqref{eq:deformed-Gamma}, for $b,c\in B$ we have
		\[
		b_1c_1\sigma^{-1}(b_2,c_2)
		=b\star_\sigma c
		=\sigma(b_1,c_1)b_2c_2\sigma^{-1}(b_3,c_3).
		\]
		Applying $\eps_K$ to both sides gives
		$\sigma^{-1}(b,c)=\eps_K(b)\eps_K(c)$. Since $B$ is a right
		coideal, the convolution identity then gives
		\[
		\eps_K(b)\eps_K(c)
		=\sigma^{-1}(b_1,c_1)\sigma(b_2,c_2)
		=\eps_K(b_1)\eps_K(c_1)\sigma(b_2,c_2)
		=\sigma(b,c),
		\]
		proving \eqref{eq:Gamma-hopf-target}.

		Conversely, assume \eqref{eq:Gamma-hopf-target}. Since $B$ is a
		right coideal, for $b,c\in B$ we have
		\[
		b\star_\sigma c
		=\sigma(b_1,c_1)b_2c_2\sigma^{-1}(b_3,c_3)
		=\eps_K(b_1)\eps_K(c_1)b_2c_2\sigma^{-1}(b_3,c_3)
		=b_1c_1\sigma^{-1}(b_2,c_2)
		=b\cdot_\sigma c.
		\]
		Together with \eqref{eq:deformed-Gamma}, this proves that
		$\Gamma:A_\sigma\to K^\sigma$ is multiplicative.
	\end{proof}
	
	Suppose that $\Gamma$ is injective. Since $\Gamma$ is a morphism in
	$\Alg(\Comod(K))$, the coaction is
	\[
	\rho=(\Gamma^{-1}\otimes\id)\Delta_K\Gamma,
	\qquad
	\rho(a)=a_0\otimes a_1,
	\]
	with $\Gamma^{-1}:B\to A$ the inverse of $\Gamma:A\to B$, and this
	is the unique coaction for which
	$\Gamma:(A,\rho)\to(K,\Delta_K)$ is a morphism in
	$\Alg(\Comod(K))$. Conversely, any injective unital algebra map
	$\Gamma:A\to K$ whose image is a right coideal subalgebra induces a
	right $K$-comodule algebra structure on $A$ by this formula.
	
	Since $I=0$, Lemma~\ref{lem:coideal-deformation} gives
	$\Gamma:A_\sigma\hookrightarrow F_\sigma(K)$.
	
	\begin{remark}\label{rem:hopf-target-closure}
		A right coideal subalgebra of $K$ need not remain a subalgebra
		of $K^\sigma$ after cocycle deformation. Over a field of characteristic
		$\neq2$, let $K$ be the Hopf algebra generated by $g,x$ with
		\[
		g^4=1,\quad x^2=0,\quad xg=-gx,\qquad
		\Delta(g)=g\otimes g,\quad
		\Delta(x)=x\otimes1+g\otimes x.
		\]
		The set $\{g^ix^e\mid 0\leq i<4,\ e\in\{0,1\}\}$ is a basis of $K$. Define
		\[
		\sigma(g^i,g^j)=1,\qquad
		\sigma(g^ix,g^j)=\sigma(g^i,g^jx)=0,\qquad
		\sigma(g^ix,g^jx)=(-1)^j.
		\]
		This is the specialization of the cocycle in
		\cite[Example~5.1]{GM} to the present parameters and basis. Its convolution inverse satisfies
		$\sigma^{-1}(g^ix,g^jx)=-(-1)^j$.
		For $y=g^{-1}x$, the subspace $B=k1+ky$ is a right coideal
		subalgebra, since $y^2=0$ and
		$\Delta(y)=y\otimes g^{-1}+1\otimes y$. The two products are
		\[
		y\cdot_\sigma y=1\in B,
		\qquad
		y\star_\sigma y=1-g^2\notin B.
		\]
		Thus the inclusion $B_\sigma\to K^\sigma$ induced by $\Gamma$ is
		not an algebra map.
	\end{remark}

	Recall that $(A,\rho)$ is a right $K$-comodule algebra.
	For $t\in\Alg(A,k)$, define the character-evaluation map
	\[
	j_t=(t\otimes\id_K)\rho:A\to K.
	\]
	In Remark~\ref{rem:hopf-target-closure}, the characters
	$t_\pm\in\Alg(B_\sigma,k)$ given by $t_\pm(y)=\pm1$
	yield injective algebra maps
	\[
	j_{t_\pm}:B_\sigma\longrightarrow K^\sigma,
	\qquad
	j_{t_\pm}(y)=y\pm g^{-1}.
	\]

	Related uses of characters appear in the study of
	covariantized algebras by Kolb--Stokman
	\cite[Section~2.6]{KoSt09} and of finite-dimensional
	Hopf algebra actions on finitely generated commutative
	domains by Etingof--Walton
	\cite[Lemma~3.2 and equation~(3.3)]{EW14}.

	\begin{proposition}[Character evaluation]
		\label{pro:character}
		Let $L$ be a Hopf algebra and $(C,\rho_C)$ a right
		$L$-comodule algebra. The assignment
		\[
		\Alg(C,k)\xrightarrow{\sim}
		\Hom_{\Alg(\Comod(L))}
		\bigl((C,\rho_C),(L,\Delta_L)\bigr),
		\qquad
		t\longmapsto j_t=(t\otimes\id)\rho_C,
		\]
		is a bijection, with inverse $j\mapsto\eps_L\circ j$.
	\end{proposition}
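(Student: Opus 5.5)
The plan is to check, in turn, that each $j_t$ lands in the right Hom-set, that $\eps_L$ recovers $t$, and that every morphism is determined by its composite with $\eps_L$. All of this is formal from the comodule-algebra axioms.

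\emph{Step 1: $j_t$ is a morphism in $\Alg(\Comod(L))$.} Since $\rho_C:C\to C\otimes L$ is a unital algebra map and $t\otimes\id_L:C\otimes L\to k\otimes L\cong L$ is also one, their composite $j_t$ is a unital algebra map. For colinearity we must show $\Delta_L j_t=(j_t\otimes\id_L)\rho_C$. Coassociativity of $\rho_C$ gives
\[
\Delta_L j_t(c)=t(c_0)c_1\otimes c_2=t(c_{0})\,(c_{1})\otimes c_{2},
\]
where the middle expression uses $(\id\otimes\Delta_L)\rho_C$ and the right-hand one uses $(\rho_C\otimes\id)\rho_C$. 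The right-hand expression is exactly $j_t(c_0)\otimes c_1$.

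\emph{Step 2: the composite $t\mapsto j_t\mapsto\eps_L j_t$ is the identity.} By the counit axiom for the coaction, $\eps_L j_t(c)=t(c_0)\eps_L(c_1)=t(c)$.

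\emph{Step 3: the converse composite.} Let $j$ be a morphism $(C,\rho_C)\to(L,\Delta_L)$ in $\Alg(\Comod(L))$. Then $\eps_L\circ j$ is a composite of unital algebra maps, so it lies in $\Alg(C,k)$. Colinearity gives $\Delta_L j=(j\otimes\id)\rho_C$. Applying $\eps_L\otimes\id_L$ to both sides yields
\[
j(c)=\eps_L(j(c_0))\,c_1=j_{\eps_L j}(c).
\]
Hence the two assignments are mutually inverse.

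No step is a real obstacle. The only point needing care is Step 1, where one must identify $k\otimes L$ with $L$ compatibly with the algebra structures and write the coassociativity bookkeeping in the paper's Sweedler conventions.
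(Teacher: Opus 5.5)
Your proof is correct and is essentially the paper's argument: $j_t$ is a unital algebra map and is colinear by multiplicativity and coassociativity of $\rho_C$, and applying $\eps_L\otimes\id_L$ to $\Delta_L j=(j\otimes\id)\rho_C$ recovers $j=j_{\eps_L j}$; your Step~2 also makes explicit the counit check $\eps_L j_t=t$, which the paper leaves implicit. In your Step~1 display the two right-hand expressions are typeset identically, so the second should be written $t(c_{0(0)})\,c_{0(1)}\otimes c_1$; that form is the one visibly equal to $j_t(c_0)\otimes c_1$.
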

	
	\begin{proof}
		For $t\in\Alg(C,k)$, multiplicativity and
		coassociativity of $\rho_C$ give
		\[
		j_t(ab)=j_t(a)j_t(b),\qquad j_t(1)=1,
		\qquad
		\Delta_L\circ j_t=(j_t\otimes\id)\rho_C.
		\]
		Conversely, if $j$ is a morphism in
		$\Alg(\Comod(L))$, then $\eps_L\circ j\in\Alg(C,k)$,
		and colinearity gives
		\[
		j=((\eps_L\circ j)\otimes\id)\rho_C.
		\]
		Hence the two assignments are inverse.
	\end{proof}
	
	\begin{proposition}\label{prop:character-kernel}
		Let $L$ be a Hopf algebra, $(C,\rho_C)$ a right
		$L$-comodule algebra, and $t\in\Alg(C,k)$.
		Then $\ker j_t$ is the largest right
		$L$-subcomodule of $C$ contained in $\ker t$.
		Moreover, $\operatorname{im}j_t$ is a right coideal
		subalgebra of $L$, and
		\[
		C/\ker j_t\cong\operatorname{im}j_t
		\]
		as right $L$-comodule algebras.
	\end{proposition}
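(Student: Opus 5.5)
The plan has three parts: identify $\ker j_t$ directly from the coaction, then read off the statements about the image from Proposition~\ref{pro:character}.

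\textbf{Step 1: $\ker j_t$ is a subcomodule inside $\ker t$.} Since $t=\eps_L\circ j_t$ by Proposition~\ref{pro:character}, we get $\ker j_t\subseteq\ker t$. Colinearity gives $\Delta_L j_t=(j_t\otimes\id)\rho_C$. For $c\in\ker j_t$ this yields $(j_t\otimes\id)\rho_C(c)=0$, so $\rho_C(c)\in\ker j_t\otimes L$. Here one uses the standard fact that $\ker(f\otimes\id)=\ker f\otimes L$ over a field. Hence $\ker j_t$ is a right $L$-subcomodule contained in $\ker t$.

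\textbf{Step 2: maximality.} Let $N\subseteq\ker t$ be any right $L$-subcomodule. For $n\in N$ we have $\rho_C(n)\in N\otimes L$, so
\[
j_t(n)=(t\otimes\id)\rho_C(n)\in t(N)\otimes L=0.
\]
Thus $N\subseteq\ker j_t$. This step is immediate once $\ker(f\otimes\id)$ has been identified; that identification is really the only point needing care, and it is elementary (choose a complement of $\ker f$).

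\textbf{Step 3: the image and the isomorphism.} By Proposition~\ref{pro:character}, $j_t$ is a morphism $(C,\rho_C)\to(L,\Delta_L)$ in $\Alg(\Comod(L))$. Its image is therefore a unital subalgebra of $L$. It is also a right coideal, since
\[
\Delta_L(j_t(c))=j_t(c_0)\otimes c_1\in\operatorname{im}j_t\otimes L.
\]
The kernel $\ker j_t$ is a two-sided ideal, because $j_t$ is an algebra map, and a subcomodule by Step 1. So $C/\ker j_t$ inherits a right $L$-comodule algebra structure. The induced bijection $C/\ker j_t\to\operatorname{im}j_t$ is then an algebra map and colinear, which gives the asserted isomorphism of right $L$-comodule algebras.
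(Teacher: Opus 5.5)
Your proof is correct and follows essentially the same route as the paper. Both arguments use $\eps_L\circ j_t=t$ for the inclusion $\ker j_t\subseteq\ker t$, the computation $j_t(N)=(t\otimes\id)\rho_C(N)=0$ for maximality, and Proposition~\ref{pro:character} (that $j_t$ is a unital colinear algebra map) for the image and quotient statements; your only addition is spelling out, via $\ker(j_t\otimes\id)=\ker j_t\otimes L$, why the kernel of a colinear map is a subcomodule, which the paper takes for granted.
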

	
	\begin{proof}
		By Proposition~\ref{pro:character}, $j_t$ is a
		unital colinear algebra map. Thus its kernel is
		a right $L$-subcomodule ideal, its image is a
		right coideal subalgebra, and the stated
		isomorphism follows.
		
		Since $\eps_Lj_t=t$, one has
		$\ker j_t\subseteq\ker t$.
		If $U\subseteq\ker t$ is a right $L$-subcomodule,
		then
		\[
		j_t(U)=(t\otimes\id)\rho_C(U)=0.
		\]
		Hence $U\subseteq\ker j_t$.
	\end{proof}
	
	\begin{corollary}\label{cor:deformed-character-evaluation}
		The assignment
		\[
		\Alg(A_\sigma,k)
		\xrightarrow{\sim}
		\Hom_{\Alg(\Comod(K^\sigma))}
		\bigl((A_\sigma,\rho),(K^\sigma,\Delta_K)\bigr),
		\qquad
		t\longmapsto j_t=(t\otimes\id)\rho,
		\]
		is a bijection.
	\end{corollary}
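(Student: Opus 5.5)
This is a direct specialization of Proposition~\ref{pro:character}, so the plan is to apply it with $L=K^\sigma$ and $(C,\rho_C)=(A_\sigma,\rho)$. First, by Proposition~\ref{prop:transport}, $A_\sigma$ is a right $K^\sigma$-comodule algebra. Its underlying vector space and coaction $\rho$ are those of $A$, and its multiplication is $\cdot_\sigma$. Likewise $K^\sigma$ is a Hopf algebra whose coalgebra structure, and hence whose counit $\eps_K$ and coproduct $\Delta_K$, coincide with those of $K$. Regarded as its own regular right comodule algebra, $(K^\sigma,\Delta_K)$ is therefore exactly the target $(L,\Delta_L)$ appearing in Proposition~\ref{pro:character}.

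Applying Proposition~\ref{pro:character} then gives a bijection from $\Alg(A_\sigma,k)$ onto the set of morphisms $(A_\sigma,\rho)\to(K^\sigma,\Delta_K)$ in $\Alg(\Comod(K^\sigma))$. This bijection sends $t$ to $(t\otimes\id)\rho$, and its inverse is $j\mapsto\eps_K\circ j$. Since $\rho$ is unchanged by the deformation, the formula $j_t=(t\otimes\id)\rho$ is literally the same expression as for the undeformed $A$. The only changes are the algebra structures: characters must now be multiplicative for $\cdot_\sigma$, and the maps $j_t$ are multiplicative into $\star_\sigma$.

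There is no real obstacle here. The only point needing care is to check that the hypotheses of Proposition~\ref{pro:character} are met in the deformed setting. We need $K^\sigma$ to be a genuine Hopf algebra, and we need $A_\sigma$ to be a comodule algebra over it rather than over $K$. Both facts are supplied by Proposition~\ref{prop:transport}. Moreover, the proof of Proposition~\ref{pro:character} uses only multiplicativity of the coaction and coassociativity, never any special feature of the algebra, so the specialization is immediate.
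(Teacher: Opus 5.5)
Your proposal is correct and follows the paper's own argument: the paper likewise obtains the corollary by applying Proposition~\ref{pro:character} to the right $K^\sigma$-comodule algebra $A_\sigma$. Your extra checks are accurate and harmless: Proposition~\ref{prop:transport} makes $A_\sigma$ a $K^\sigma$-comodule algebra with unchanged coaction, and $K^\sigma$ keeps the coproduct $\Delta_K$ and counit $\eps_K$.
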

	
	\begin{proof}
		Apply Proposition~\ref{pro:character} to
		the right $K^\sigma$-comodule algebra $A_\sigma$.
	\end{proof}

	\begin{corollary}\label{cor:deformed-characters}
		Let $t:A\to k$ be a linear map. Then
		$t\in\Alg(A_\sigma,k)$ if and only if
		$t(1)=1$ and
		\begin{equation}\label{eq:deformed-character-condition}
			t(a_0b_0)\sigma^{-1}(a_1,b_1)=t(a)t(b),\qquad a,b\in A.
		\end{equation}
		Moreover, the following conditions are equivalent:
		\begin{enumerate}[label=(\roman*)]
			\item $\Gamma:A_\sigma\to K^\sigma$ is an algebra map;
			\item $\eps_K\circ\Gamma\in\Alg(A_\sigma,k)$;
			\item $\sigma|_{B\otimes B}=\eps_K|_B\otimes\eps_K|_B$.
		\end{enumerate}
		
		Finally, let $r\in\Alg(A,k)$ and set
		$j_r=(r\otimes\id)\rho$ and
		$B_r=\operatorname{im}j_r$. Then
		\begin{equation}\label{eq:character-survival}
			r\in\Alg(A_\sigma,k)
			\quad\Longleftrightarrow\quad
			\sigma|_{B_r\otimes B_r}
			=
			\eps_K|_{B_r}\otimes\eps_K|_{B_r}.
		\end{equation}
	\end{corollary}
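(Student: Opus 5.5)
The statement has three parts, and I will prove them in order.

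\emph{First part.} I unwind the definition of the multiplication $\cdot_\sigma$ in \eqref{eq:transport}. The unit of $A_\sigma$ is still $1$, since $\rho(1)=1\otimes1$ and $\sigma^{-1}$ is normalized. A linear map $t$ is then a unital algebra map $A_\sigma\to k$ exactly when $t(1)=1$ and $t(a\cdot_\sigma b)=t(a)t(b)$. Expanding $t(a\cdot_\sigma b)=t(a_0b_0)\sigma^{-1}(a_1,b_1)$ gives precisely \eqref{eq:deformed-character-condition}.

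\emph{Second part.} The equivalence (i)$\Leftrightarrow$(iii) is Proposition~\ref{prop:Gamma-hopf-target}. For (i)$\Rightarrow$(ii), I compose with $\eps_K$, which is an algebra map $K^\sigma\to k$ because $K^\sigma$ is a Hopf algebra with the same counit. For (ii)$\Rightarrow$(i), I set $t=\eps_K\circ\Gamma\in\Alg(A_\sigma,k)$. Since $\Gamma$ is colinear, $(\eps_K\otimes\id)\Delta_K\Gamma=\Gamma$, so
\[
j_t=(t\otimes\id)\rho=(\eps_K\otimes\id)(\Gamma\otimes\id)\rho=(\eps_K\otimes\id)\Delta_K\Gamma=\Gamma .
\]
Corollary~\ref{cor:deformed-character-evaluation} says that $j_t$ is an algebra map $A_\sigma\to K^\sigma$, hence so is $\Gamma$.

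\emph{Third part.} I reduce \eqref{eq:character-survival} to the second part by taking $\Gamma=j_r$. By Proposition~\ref{pro:character}, $j_r:(A,\rho)\to(K,\Delta_K)$ is a morphism in $\Alg(\Comod(K))$, and $\eps_K\circ j_r=r$. The equivalence (ii)$\Leftrightarrow$(iii), applied to this $\Gamma$ with $B=B_r$, then reads $r\in\Alg(A_\sigma,k)$ if and only if $\sigma|_{B_r\otimes B_r}=\eps_K|_{B_r}\otimes\eps_K|_{B_r}$. This is exactly \eqref{eq:character-survival}.

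The only point that needs care is checking that the standing hypotheses on $\Gamma$ (a morphism in $\Alg(\Comod(K))$ with image $B$) are met by $j_r$, so that Proposition~\ref{prop:Gamma-hopf-target} applies with $B_r$ in place of $B$. Proposition~\ref{pro:character} supplies exactly this. Everything else is direct unwinding of the definitions.
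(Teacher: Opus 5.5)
Your proposal is correct and follows the paper's proof. The first part unwinds $\cdot_\sigma$; the equivalence of (i)--(iii) combines Proposition~\ref{prop:Gamma-hopf-target}, the identity $\Gamma=((\eps_K\Gamma)\otimes\id)\rho$ and the character-evaluation bijection; the last claim applies this to $j_r$, and you also make explicit the step (i)$\Rightarrow$(ii) via the counit of $K^\sigma$, which the paper leaves implicit.
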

	
	\begin{proof}
		The first assertion is the multiplicativity
		condition for $t$ on $A_\sigma$.
		
		Since $\Gamma$ is right colinear, $\Gamma=((\eps_K\Gamma)\otimes\id)\rho$.
		The second assertion follows from
		Proposition~\ref{pro:character} and
		Proposition~\ref{prop:Gamma-hopf-target}.
		
		For $r\in\Alg(A,k)$,
		Proposition~\ref{pro:character} gives a morphism
		$j_r:A\to K$ in $\Alg(\Comod(K))$ with image
		$B_r$ and $\eps_Kj_r=r$.
		Applying the second assertion to $j_r$ yields
		\eqref{eq:character-survival}.
	\end{proof}
	
	\begin{remark}\label{rem:character-kernel}
		The character-evaluation map can have a nonzero kernel.
		Take $K=k[z,z^{-1}]$, $A=k[z]\subseteq K$,
		and the trivial cocycle. For the character
		$t(z)=0$, the coaction $\rho(z)=z\otimes z$
		gives $j_t(z)=0$ and $\operatorname{im}j_t=k1$.
	\end{remark}
	
	\begin{proposition}\label{prop:stable-ideals}
		Suppose that $A$ has no nonzero proper
		two-sided ideal that is a right $K$-subcomodule.
		Then every character-evaluation map of
		$A_\sigma$ is injective.
	\end{proposition}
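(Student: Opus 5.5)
Let $t\in\Alg(A_\sigma,k)$ and consider $j_t=(t\otimes\id)\rho:A_\sigma\to K^\sigma$, which is a morphism in $\Alg(\Comod(K^\sigma))$ by Corollary~\ref{cor:deformed-character-evaluation}. The plan is to show that $\ker j_t$ is a two-sided ideal of $A$ that is a right $K$-subcomodule and is proper. The hypothesis on $A$ then forces $\ker j_t=0$.

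First, I apply Proposition~\ref{prop:character-kernel} to the right $K^\sigma$-comodule algebra $C=A_\sigma$ with $L=K^\sigma$. It shows that $J:=\ker j_t$ is a two-sided ideal of $A_\sigma$. It also shows that $J$ is a right $K^\sigma$-subcomodule of $A_\sigma$. Since $A_\sigma$ and $A$ have the same underlying space and coaction, and $K^\sigma=K$ as coalgebras, $J$ is also a right $K$-subcomodule of $A$. Lemma~\ref{lem:deformation-ideals} then shows that $J$ is a two-sided ideal of $A$, by the direction from $A_\sigma$-ideals to $A$-ideals.

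Next, $J$ is proper. Indeed, $j_t(1)=t(1)\,1=1\neq0$ because $j_t$ is unital, so $1\notin J$. By hypothesis $A$ has no nonzero proper two-sided ideal that is a right $K$-subcomodule, so $J=0$, and $j_t$ is injective. If $A=0$ there is no character at all and the statement is vacuous, so this case needs no separate treatment.

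The only point that needs care is the transfer of the ideal property back from $A_\sigma$ to $A$. This is exactly the converse in Lemma~\ref{lem:deformation-ideals}, obtained through $F_{\sigma^{-1}}$, so I do not expect a real obstacle.
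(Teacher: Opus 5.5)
Your proof is correct and follows essentially the same route as the paper's: $\ker j_t$ is a proper two-sided ideal of $A_\sigma$ and a $K$-subcomodule, so Lemma~\ref{lem:deformation-ideals} makes it an ideal of $A$, and the hypothesis forces it to vanish. Your remarks on properness via $j_t(1)=1$ and on the vacuous case $A=0$ are details the paper leaves implicit.
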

	
	\begin{proof}
		Let $j_t$ be a character-evaluation map of $A_\sigma$. By Corollary~\ref{cor:deformed-character-evaluation},
		$j_t$ is a unital right $K^\sigma$-colinear
		algebra map. Thus $\ker j_t$ is a proper
		two-sided ideal of $A_\sigma$ and a right
		$K$-subcomodule of $A$.
		By Lemma~\ref{lem:deformation-ideals},
		it is also an ideal of $A$.
		Hence $\ker j_t=0$.
	\end{proof}
	
	\begin{theorem}\label{thm:coideal-character-injectivity}
		Let $B\subseteq K$ be a unital right coideal subalgebra,
		and  $j_t$  the character-evaluation map
		associated with $t\in\Alg(B_\sigma,k)$.
		The following conditions are equivalent:
		\begin{enumerate}
			\item $j_t$ is injective;
			\item $j_t|_{B\cap K_0}$ is injective;
			\item $t$ extends to a convolution-invertible
			functional $f\in K^*$;
			\item $j_t$ extends to a linear automorphism $U:K\to K$
			satisfying $\Delta_K\circ U=(U\otimes\id_K)\Delta_K$.
		\end{enumerate}
		If $K$ is pointed, these conditions are equivalent to
		\[
		t(g)\neq0
		\qquad\text{for all }g\in G(K)\cap B,
		\]
		where $G(K)$ denotes the group-like elements of $K$.
	\end{theorem}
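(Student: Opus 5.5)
The plan is to work throughout with the explicit formula $j_t(b)=t(b_1)b_2$ for $b\in B$. This holds because the coaction on $B_\sigma$ is the restriction of $\Delta_K$, and $K^\sigma$ has the coalgebra of $K$. Injectivity of $j_t$, and the conditions in (3) and (4), involve only this linear map, so the deformed product matters only through the fact that $t$ is unital. I would prove the cycle $(1)\Rightarrow(2)\Rightarrow(3)\Rightarrow(4)\Rightarrow(1)$. In addition, I would give a direct proof of $(2)\Rightarrow(1)$, since it also explains the pointed case.

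\emph{Easy implications.} $(1)\Rightarrow(2)$ is immediate. For $(3)\Rightarrow(4)$, set $U(x)=f(x_1)x_2$. Coassociativity gives $\Delta_K U=(U\otimes\id)\Delta_K$, and $x\mapsto f^{-1}(x_1)x_2$ is a two-sided inverse because $f*f^{-1}=f^{-1}*f=\eps_K$. On $B$ we have $U|_B=j_t$. For $(4)\Rightarrow(1)$, $j_t$ is the restriction of an injective map.

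\emph{Direct proof of $(2)\Rightarrow(1)$.} By Proposition~\ref{prop:character-kernel}, $\ker j_t$ is a right $K$-subcomodule of $K$. If it were nonzero, it would contain a simple subcomodule $S$. The coefficient coalgebra of $S$ is simple, so $S\subseteq K_0$ and hence $S\subseteq B\cap K_0$, contradicting (2).

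\emph{Main step: $(2)\Rightarrow(3)$.} I will use Takeuchi's criterion: $f\in K^*$ is convolution invertible iff $f|_{K_0}$ is invertible in $K_0^*$. Since $K_0$ is a direct summand of $K$ and $B\cap K_0$ is a subspace of $K_0$, any extension from $K_0$ to $K$ can be chosen freely. So it suffices to extend $t|_{B\cap K_0}$ to an invertible element of $K_0^*=\prod_C C^*$, the product over the simple subcoalgebras $C$.

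Since $B\cap K_0$ is a right $K_0$-subcomodule, it decomposes as $\bigoplus_C (B\cap C)$. By (2) and Proposition~\ref{prop:character-kernel}, no nonzero subcomodule of $V=B\cap C$ lies in $\ker t$. So the question reduces to a single simple coalgebra $C$ with a right coideal $V\subseteq C$ and a functional $t$ on $V$ whose kernel contains no nonzero subcomodule, and I must show $t|_V$ extends to a unit of $C^*$.

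Write $C^*\cong M_n(D)$ for a division algebra $D$, and identify $C$ as a right comodule with $S^{\oplus n}$, where $S$ is the simple comodule. Then $V\cong S^{\oplus m}$ corresponds to a $D$-subspace of dimension $m$. The kernel condition says the restriction of $t$ to this subspace is an injective ("full rank") $D$-linear datum. Elementary linear algebra over $D$ then extends it to an invertible matrix, by completing a basis. This bookkeeping, identifying right coideals of $C$ and the condition on $t$ in matrix terms (especially over non-split $D$), is where I expect the main obstacle. If the general case becomes unwieldy, I would first treat $k$ algebraically closed and then argue the $D$-linear version identically.

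\emph{Pointed case.} Here every simple subcoalgebra is $kg$ with $g\in G(K)$, so $B\cap K_0$ is spanned by the group-likes lying in $B$, and $j_t(g)=t(g)g$. Distinct group-likes are linearly independent, so $j_t|_{B\cap K_0}$ is injective iff $t(g)\ne0$ for all $g\in G(K)\cap B$. This gives the final assertion via (2).
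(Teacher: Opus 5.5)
Your easy implications, the direct proof of $(2)\Rightarrow(1)$ (which is the paper's socle argument), the reduction via Takeuchi's criterion, the decomposition $B\cap K_0=\bigoplus_C(B\cap C)$, and the pointed case all match the paper. One small correction in the reduction: what makes it work is not that $K_0$ is a direct summand. It is that $t$ on $B$ and the local extension $f_0$ on $K_0$ agree on $B\cap K_0$, so they glue to a functional on $B+K_0$, which you then extend to $K$. The final $f$ must restrict to $t$ on all of $B$, not only on $B\cap K_0$.

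The genuine gap is the local step on a simple subcoalgebra $C$. It carries all the content of $(2)\Rightarrow(3)$, and you leave it as bookkeeping. As phrased it is not yet a precise claim: $t|_V$ is only a $k$-linear functional, so no $D$-linear ``full rank datum'' is attached to it until you specify one. You would then still have to prove two things: that the subcomodule condition on $\ker t$ is equivalent to that rank condition, and that invertibility of the completed matrix is convolution-invertibility in $C^*$ under your identifications.

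The missing idea is to extend the colinear map $j_t|_V$ instead of the functional $t$. By (2), $j_t|_V:V\to C$ is an injective comodule map; its image lies in $C$ because $\Delta(C)\subseteq C\otimes C$. In the finite-dimensional semisimple comodule $C$, write $C=V\oplus V'=j_t(V)\oplus W$. Since $V\cong j_t(V)$, cancellation of simple constituents gives $V'\cong W$, so $j_t|_V$ extends to a comodule automorphism $U_0$ of $C$.

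Then use the converse of the correspondence from your $(3)\Rightarrow(4)$. Applying $\eps\otimes\id$ to $\Delta U=(U\otimes\id)\Delta$ shows that every colinear endomorphism satisfies $U=U_{\eps U}$, where $U_f(x)=f(x_1)x_2$. Since also $U_fU_g=U_{g*f}$, the functional $f_0=\eps\circ U_0$ is a unit of $C^*$, and $f_0|_V=\eps\circ j_t=t$.

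This is the paper's argument (done for all $C$ at once to get an automorphism of $K_0$), and it avoids coordinates over $D$ entirely. Your basis completion is the Morita-theoretic shadow of the same cancellation, applied to the $\operatorname{End}(S)$-linear map between multiplicity spaces induced by $j_t$.
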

	
	\begin{proof}
		Since the socle of the regular right $K$-comodule
		is $K_0$, the socle of $B$ is $B\cap K_0$.
		Every nonzero subcomodule contains a simple
		subcomodule. Since $\ker j_t$ is a subcomodule,
		conditions (1) and (2) are equivalent.
		
		Every right-colinear endomorphism $U$ of
		$(K,\Delta_K)$ has the form
		\[
		U=U_f,\qquad U_f(x)=f(x_1)x_2,
		\qquad f=\eps_KU.
		\]
		Indeed, this follows by applying
		$\eps_K\otimes\id$ to
		$\Delta_KU=(U\otimes\id)\Delta_K$.
		Moreover, $U_f\circ U_g=U_{g*f}$.
		Thus a convolution-invertible extension $f$
		of $t$ gives an automorphism $U_f$ with
		inverse $U_{f^{-1}}$ and $U_f|_B=j_t$.
		Conversely, if an automorphism $U$ extends
		$j_t$, then $f=\eps_KU$ and
		$g=\eps_KU^{-1}$ satisfy
		$f*g=g*f=\eps_K$, with $f|_B=t$.
		Hence (3) and (4) are equivalent, and each
		implies (1).
		
		Assume (2). Write $K_0=\bigoplus_D D$,
		where $D$ ranges over the simple subcoalgebras.
		Since distinct $D$ have nonisomorphic simple
		right-comodule constituents,
		\[
		B\cap K_0=\bigoplus_D(B\cap D),
		\]
		and $j_t(B\cap D)\subseteq D$.
		Each $D$ is a finite-dimensional semisimple
		right comodule, so the injective comodule map
		\[
		j_t|_{B\cap D}:B\cap D\longrightarrow D
		\]
		extends to a right-comodule automorphism of $D$: the source and
		image have isomorphic complements in the finite-dimensional
		semisimple comodule $D$.
		Taking the direct sum gives a right-comodule
		automorphism $U_0$ of $K_0$.
		
		Put $f_0=\eps_KU_0$. By the preceding
		endomorphism calculation, $f_0$ is
		convolution-invertible on $K_0$, and
		$f_0=t$ on $B\cap K_0$ since
		$\eps_Kj_t=t$. Thus $t$ on $B$ and $f_0$
		on $K_0$ define a functional on $B+K_0$.
		Extend it linearly to $f\in K^*$.
		By \cite[Lemma~14]{Takeuchi71}, a functional
		on a coalgebra is convolution-invertible
		if and only if its restriction to the
		coradical is convolution-invertible.
		Hence $f$ satisfies (3).
		
		If $K$ is pointed, then $K_0=\oplus_{g\in G(K)}kg$. 
		Since these one-dimensional summands are
		pairwise nonisomorphic simple comodules, $B\cap K_0=\operatorname{span}_k(G(K)\cap B).$
		As $j_t(g)=t(g)g$, condition (2) is
		equivalent to the stated nonvanishing condition.
	\end{proof}

	\begin{lemma}[Injectivity for Hopf quotients]
		\label{lem:quotient-character-injectivity}
		Let $\pi:K\to Q$ be a Hopf algebra map,
		and let $(Q,\rho)\in\Alg(\Comod(K))$ satisfy
		\begin{equation}\label{eq:quotient-coaction}
			(\id_Q\otimes\pi)\rho=\Delta_Q.
		\end{equation}
		Then every character-evaluation map of $(Q,\rho)_\sigma$ is injective.
	\end{lemma}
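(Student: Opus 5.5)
Let $t\in\Alg((Q,\rho)_\sigma,k)$ and $j_t=(t\otimes\id_K)\rho:Q\to K$. The plan is to construct an explicit linear left inverse of $j_t$ by composing with $\pi$ and then undoing a convolution by $t$. This avoids any use of the structure of $\sigma$. The deformation changes only the multiplication of $Q$, not its coaction. So as a linear map, $j_t$ is still $(t\otimes\id)\rho$, and \eqref{eq:quotient-coaction} still holds for the same $\rho$.

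First compose with $\pi$. For $q\in Q$,
\[
\pi j_t(q)=t(q_0)\pi(q_1)=(t\otimes\id_Q)\Delta_Q(q)=t(q_1)q_2,
\]
using \eqref{eq:quotient-coaction}. Thus $\pi\circ j_t=V_t$, where $V_t(q)=t(q_1)q_2$ is the left-convolution operator on $Q$ attached to the functional $t\in Q^*$ (here $t$ is regarded as a linear functional on the coalgebra $Q$). The composition rule from the proof of Theorem~\ref{thm:coideal-character-injectivity} gives $V_f\circ V_g=V_{g*f}$ for $f,g\in Q^*$, with convolution taken over $\Delta_Q$. So it suffices to show that $t$ is convolution-invertible in $Q^*$; then $V_{t^{-1}}\circ\pi$ is a left inverse of $j_t$, and $j_t$ is injective.

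The main step is proving convolution-invertibility of $t$ in $Q^*$. This is not immediate, because $t$ is multiplicative for $\cdot_\sigma$, not for the original product of $Q$. I would try to write $t^{-1}$ down directly, guided by the undeformed case $t^{-1}=t\circ S_Q$. In the deformed setting the natural candidate uses the deformed multiplication with the coaction. Set $g(q)=t\bigl(S_Q(q_1)\bigr)$ corrected by a factor $\sigma(\cdot,\cdot)$ evaluated on $K$-coefficients. One then checks $t*g=\eps_Q$ by writing $\eps_Q(q)1=S_Q(q_1)q_2$ and expressing the original product in terms of $\cdot_\sigma$ via $a b=a_0\cdot_\sigma b_0\,\sigma(a_1,b_1)$, which is the inverse deformation $F_{\sigma^{-1}}$. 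Applying $t$ then gives
\[
\eps_Q(q)=t\bigl(S_Q(q_1)_0\bigr)\,t(q_{2,0})\,\sigma\bigl(S_Q(q_1)_1,q_{2,1}\bigr).
\]
The difficulty is that $Q$ carries two coproducts here, $\Delta_Q$ and $\rho$, and one must relate $\rho$ to $\Delta_Q$ inside the $\sigma$-term.

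If the explicit inversion becomes too messy, I would fall back on the coradical criterion \cite[Lemma~14]{Takeuchi71}: $t$ is convolution-invertible in $Q^*$ if and only if its restriction to $Q_0$ is. I would then verify invertibility on each simple subcoalgebra $D$ of $Q$. Since $\dim D<\infty$, this reduces to showing that $V_t|_D$ is injective. Suppose $U\subseteq D$ is a nonzero subcomodule (for $\Delta_Q$) with $V_t(U)=0$. By Proposition~\ref{prop:character-kernel} applied over $L=Q$, or directly, $t$ then vanishes on the whole right $Q$-subcomodule generated by $U$. From there I would aim for a contradiction with $t(1)=1$ by using that $\ker(V_t)$ is a left ideal-like object for $\cdot_\sigma$. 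I expect this vanishing argument to be the main obstacle, and I would first try to settle it in the pointed case, where it amounts to showing $t(g)\neq0$ for grouplikes $g$ via $t(g)\,t(g^{-1})$-type identities twisted by $\sigma$.
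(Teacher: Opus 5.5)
\textbf{There is a genuine gap: the step you reduce to is false in general.} Your identity $\pi\circ j_t=V_t$ is correct. But a character $t$ of $(Q,\rho)_\sigma$ need not be convolution-invertible in $Q^*$. So $\pi\circ j_t$ can have a nonzero kernel even when $j_t$ is injective, and then no left inverse of the form $L\circ\pi$ exists.

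Here is a counterexample. Take $K$ and $\sigma$ as in Remark~\ref{rem:hopf-target-closure} (characteristic $\neq2$), and the Hopf map $\pi:K\to Q=k\langle\bar g\rangle\cong k\mathbb{Z}_4$ with $\pi(g)=\bar g$, $\pi(x)=0$. Transport the coaction from the right coideal subalgebra of $K$ generated by $b=g+gx$. One has $b^2=g^2$, $b^4=1$ and $\Delta(b)=b\otimes g+g^2\otimes gx$, so that
\[
\rho(\bar g)=\bar g\otimes g+\bar g^2\otimes gx,\qquad
\rho(\bar g^2)=\bar g^2\otimes g^2,\qquad
\rho(\bar g^3)=\bar g^3\otimes g^3+1\otimes g^3x .
\]
This $\rho$ satisfies \eqref{eq:quotient-coaction}. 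Since $\sigma^{-1}(g^ix,g^jx)=1$ for odd $j$, the deformed algebra $Q_\sigma$ is commutative, with
\[
\bar g^2\cdot_\sigma\bar g^2=1,\qquad
\bar g\cdot_\sigma\bar g^2=\bar g^3,\qquad
\bar g\cdot_\sigma\bar g=\bar g^2+1 .
\]
Hence $t(\bar g)=t(\bar g^3)=0$, $t(\bar g^2)=-1$ defines a character of $Q_\sigma$.

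This $t$ vanishes on the grouplike $\bar g$, so it is not convolution-invertible. Accordingly $\pi j_t(\bar g)=0$, while $j_t(\bar g)=-gx\neq0$. The example defeats both your explicit-inverse route and your pointed-case fallback ($t(g)\neq0$ fails). Note also that $\ker V_t=\operatorname{span}\{\bar g,\bar g^3\}$ is not an ideal of $Q_\sigma$, so there is no ideal structure on $\ker V_t$ to exploit.

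\textbf{The missing idea} is to work with $\ker j_t$ itself rather than with $\ker(\pi j_t)$. The argument runs as follows.
\begin{enumerate}
\item $\ker j_t$ is a proper two-sided ideal of $Q_\sigma$ and a right $K$-subcomodule.
\item By Lemma~\ref{lem:deformation-ideals}, it is therefore a two-sided ideal $J$ of the undeformed algebra $Q$.
\item Applying $\id_Q\otimes\pi$ to $\rho(J)\subseteq J\otimes K$ gives $\Delta_Q(J)\subseteq J\otimes Q$.
\item If $J\neq0$, it contains some $q$ with $\eps_Q(q)\neq0$, and then $\eps_Q(q)1=q_1S_Q(q_2)\in J$, contradicting properness.
\end{enumerate}
This is the paper's proof, via Proposition~\ref{prop:stable-ideals}. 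The point is that the antipode of $Q$ is compatible only with the original product. What survives the passage from $Q_\sigma$ back to $Q$ is the subcomodule ideal, not the character.
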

	
	\begin{proof}
		Let $J\subseteq Q$ be a nonzero two-sided ideal
		that is a right $K$-subcomodule. Then $\Delta_Q(J)\subseteq J\otimes Q.$
		Hence $\eps_Q(J)\neq0$, since otherwise
		$J=(\eps_Q\otimes\id_Q)\Delta_Q(J)=0$.
		Choose $q\in J$ with $\eps_Q(q)\neq0$.
		Since $J$ is a right ideal,
		\[
		\eps_Q(q)1=q_1S_Q(q_2)\in J,
		\]
		so $J=Q$. The assertion follows from
		Proposition~\ref{prop:stable-ideals}.
	\end{proof}
	\begin{remark}\label{rem:quotient-coaction-section}
		The condition \eqref{eq:quotient-coaction}
		gives an algebra section
		\[
		\Gamma_\rho
		=(\eps_Q\otimes\id_K)\rho:Q\to K
		\]
		satisfying
		\[
		\pi\Gamma_\rho=\id_Q,\qquad
		\Delta_K\Gamma_\rho
		=(\Gamma_\rho\otimes\id_K)\rho.
		\]
		Thus $\Gamma_\rho$ is a morphism in
		$\Alg(\Comod(K))$ and induces an isomorphism
		\[
		(Q,\rho)\cong
		(\Gamma_\rho(Q),
		\Delta_K|_{\Gamma_\rho(Q)}).
		\]
		Moreover,
		\[
		\rho=(\pi\otimes\id_K)
		\Delta_K\Gamma_\rho.
		\]
	\end{remark}
	
	\begin{lemma}[Comodule-algebra isomorphisms]
		\label{lem:quotient-coaction-morphisms}
		Let $\pi:K\to Q$ be a Hopf algebra map,
		and let $(Q,\rho),(Q,\rho')\in\Alg(\Comod(K))$
		each satisfy \eqref{eq:quotient-coaction}.
		For $r\in\Alg(Q,k)$, set
		\[
		U_r=(r\otimes\id_Q)\Delta_Q,\qquad
		\rho_r=(U_r\otimes\id_K)\rho U_{r^{-1}},
		\qquad r^{-1}=r\circ S_Q.
		\]
		Then $\rho_r$ is a right $K$-comodule algebra
		structure satisfying \eqref{eq:quotient-coaction}.
		
		Let $A=(Q,\rho)$ and $A'=(Q,\rho')$.
		The morphisms $A_\sigma\to A'_\sigma$ in
		$\Alg(\Comod(K^\sigma))$ are precisely
		the maps $U_r$ with $\rho'=\rho_r$.
		Every such morphism is an isomorphism,
		with $r=\eps_Q\circ U_r$ uniquely determined.
	\end{lemma}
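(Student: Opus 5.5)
The plan is to reduce to the undeformed case and then classify all right $K$-colinear algebra maps $Q\to Q$ through the quotient condition \eqref{eq:quotient-coaction}. Since $F_\sigma$ and its inverse $F_{\sigma^{-1}}$ act as the identity on underlying linear maps and coactions, a linear map $T:Q\to Q$ is a morphism $A_\sigma\to A'_\sigma$ in $\Alg(\Comod(K^\sigma))$ if and only if it is a morphism $A\to A'$ in $\Alg(\Comod(K))$. So it suffices to prove everything for $\sigma$ trivial.

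First I will record the basic facts about $U_r$ for $r\in\Alg(Q,k)$.
\begin{enumerate}[label=(\alph*)]
\item $U_r$ is a unital algebra map, because $\Delta_Q$ and $r$ are.
\item $U_r$ is right $Q$-colinear, that is, $\Delta_QU_r=(U_r\otimes\id_Q)\Delta_Q$, by coassociativity.
\item $\eps_QU_r=r$.
\item $U_r\circ U_s=U_{s*r}$, by the same computation as in the proof of Theorem~\ref{thm:coideal-character-injectivity}.
\end{enumerate}
Since $r^{-1}=r\circ S_Q$ is the convolution inverse of the character $r$, fact (d) gives $U_rU_{r^{-1}}=U_{r^{-1}}U_r=U_{\eps_Q}=\id_Q$. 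Hence $U_r$ is an algebra automorphism.

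It follows that $\rho_r$ is the coaction transported from $\rho$ along the algebra automorphism $U_r$. It is therefore coassociative, counital and multiplicative, so $(Q,\rho_r)$ is a right $K$-comodule algebra. For \eqref{eq:quotient-coaction}, I will use $(\id\otimes\pi)\rho=\Delta_Q$ together with colinearity (b):
\[
(\id_Q\otimes\pi)\rho_r=(U_r\otimes\id_Q)\Delta_QU_{r^{-1}}=\Delta_QU_rU_{r^{-1}}=\Delta_Q .
\]

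Next comes the classification of morphisms. Let $T:A\to A'$ be a morphism in $\Alg(\Comod(K))$. Applying $\id\otimes\pi$ to $\rho'T=(T\otimes\id)\rho$ and using \eqref{eq:quotient-coaction} for both coactions shows that $T$ is right $Q$-colinear for $\Delta_Q$. Applying $\eps_Q\otimes\id$ then gives $T=U_r$ with $r=\eps_QT$. This $r$ is a character, since $\eps_Q$ and $T$ are unital algebra maps. Colinearity of $U_r$ from $A$ to $A'$ reads $\rho'U_r=(U_r\otimes\id)\rho$. Composing on the right with $U_{r^{-1}}=U_r^{-1}$, this is equivalent to $\rho'=\rho_r$. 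Conversely, if $\rho'=\rho_r$, the same equivalence together with (a) shows that $U_r$ is a morphism.

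Each such morphism is an isomorphism by the inverse computed above. Its inverse is $U_{r^{-1}}$, which is a morphism $A'\to A$, and this persists after deformation by the first reduction. Finally, $r$ is uniquely determined by $r=\eps_QU_r$. The only step needing care is the passage from $K$-colinearity to $Q$-colinearity. This is where both coactions must satisfy \eqref{eq:quotient-coaction}, and I expect it to be routine.
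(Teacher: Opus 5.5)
Your proposal is correct and follows the paper's proof essentially step by step. The common steps are: identifying the Hom-sets before and after deformation, showing $U_r$ is an algebra automorphism via $U_rU_s=U_{s*r}$, passing from $K$-colinearity to $\Delta_Q$-colinearity using \eqref{eq:quotient-coaction} for both coactions, recovering $T=U_r$ by applying $\eps_Q\otimes\id$, and rewriting $\rho'U_r=(U_r\otimes\id_K)\rho$ as $\rho'=\rho_r$. The only cosmetic difference is that you check the comodule-algebra axioms for $\rho_r$ by transport of structure along $U_r$, where the paper verifies counitality and coassociativity explicitly.
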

	\begin{proof}
		Since $r\in\Alg(Q,k)$, its convolution inverse
		$r^{-1}=r\circ S_Q$ is also an algebra character.
		For $a,b\in Q$,
		\[
		U_r(ab)
		=r(a_{(1)}b_{(1)})a_{(2)}b_{(2)}
		=U_r(a)U_r(b),
		\qquad U_r(1)=1.
		\]
		Moreover, for $r,s\in\Alg(Q,k)$,
		\[
		U_rU_s=U_{s*r},\qquad
		U_r^{-1}=U_{r^{-1}},
		\qquad
		\Delta_QU_r=(U_r\otimes\id_Q)\Delta_Q.
		\]
		Thus $U_r$ is an algebra automorphism of $Q$.
		
		Since $\rho$ and $U_r^{\pm1}$ are algebra maps,
		so is $\rho_r$. Furthermore,
		\[
		(\id_Q\otimes\eps_K)\rho_r
		=U_r(\id_Q\otimes\eps_K)\rho U_r^{-1}
		=\id_Q,
		\]
		and
		\begin{align*}
			(\rho_r\otimes\id_K)\rho_r
			&=(U_r\otimes\id_K\otimes\id_K)
			(\rho\otimes\id_K)\rho U_r^{-1}\\
			&=(U_r\otimes\id_K\otimes\id_K)
			(\id_Q\otimes\Delta_K)\rho U_r^{-1}\\
			&=(\id_Q\otimes\Delta_K)\rho_r.
		\end{align*}
		Hence $\rho_r$ is a right $K$-comodule
		algebra structure. It also satisfies
		\[
		(\id_Q\otimes\pi)\rho_r
		=(U_r\otimes\id_Q)\Delta_QU_r^{-1}
		=\Delta_Q.
		\]
		
		The cocycle deformation equivalence
		$F_\sigma$ and its inverse preserve the
		underlying linear maps. Consequently,
		\[
		\Hom_{\Alg(\Comod(K^\sigma))}
		(A_\sigma,A'_\sigma)
		=
		\Hom_{\Alg(\Comod(K))}(A,A')
		\]
		as sets of linear maps.
		
		Let $T:A\to A'$ be a morphism in
		$\Alg(\Comod(K))$. Since $T$ is
		$K$-colinear and both coactions satisfy
		\eqref{eq:quotient-coaction},
		\begin{align*}
			\Delta_QT
			=(\id_Q\otimes\pi)\rho'T
			=(\id_Q\otimes\pi)(T\otimes\id_K)\rho
			=(T\otimes\id_Q)\Delta_Q.
		\end{align*}
		Applying $\eps_Q\otimes\id_Q$ gives
		\[
		T=(r\otimes\id_Q)\Delta_Q=U_r,
		\qquad r=\eps_Q\circ T.
		\]
		Since $T$ and $\eps_Q$ are unital algebra
		maps, $r\in\Alg(Q,k)$.
		
		The $K$-colinearity of $T=U_r$ is
		precisely
		\[
		\rho'U_r=(U_r\otimes\id_K)\rho,
		\]
		or equivalently,
		\[
		\rho'
		=(U_r\otimes\id_K)\rho U_{r^{-1}}
		=\rho_r.
		\]
		Conversely, if $\rho'=\rho_r$, then
		$U_r:A\to A'$ is a $K$-colinear algebra
		map. Its inverse $U_{r^{-1}}$ is also
		$K$-colinear, since
		\[
		\rho U_{r^{-1}}
		=(U_{r^{-1}}\otimes\id_K)\rho'.
		\]
		Thus $U_r$ is an isomorphism in
		$\Alg(\Comod(K))$, and hence also
		after cocycle deformation.
		
		Finally,
		\[
		\eps_Q\circ U_r=r,
		\]
		which proves the uniqueness of $r$.
	\end{proof}
	
	\begin{theorem}[Equality of images]
		\label{thm:character-evaluation-images}
		Under the assumptions of
		Lemma~\ref{lem:quotient-coaction-morphisms},
		let $t\in\Alg(A_\sigma,k)$ and
		$u\in\Alg(A'_\sigma,k)$, and put
		\[
		j_t^\rho=(t\otimes\id_K)\rho,\qquad
		j_u^{\rho'}=(u\otimes\id_K)\rho'.
		\]
		The following are equivalent:
		\begin{enumerate}[label=(\roman*)]
			\item
			$\operatorname{im}j_t^\rho
			=\operatorname{im}j_u^{\rho'}$.
			
			\item There is an isomorphism
			\[
			T:A_\sigma\xrightarrow{\sim}A'_\sigma
			\quad\text{in }\Alg(\Comod(K^\sigma))
			\]
			such that $u\circ T=t$.
			
			\item There is $r\in\Alg(Q,k)$ satisfying
			\begin{equation}\label{eq:character-evaluation-images}
				\rho'=\rho_r,\qquad
				u=r^{-1}*t.
			\end{equation}
		\end{enumerate}
		When these conditions hold, $T=U_r$, and
		both $T$ and $r$ are unique.
		
		Moreover, the opposite convolution group
		$\Alg(Q,k)^{\op}$ acts freely on the pairs
		$(\rho,t)$ satisfying
		\[
		(\id_Q\otimes\pi)\rho=\Delta_Q,\qquad
		t\in\Alg((Q,\rho)_\sigma,k),
		\]
		by
		\[
		r\cdot(\rho,t)=(\rho_r,r^{-1}*t).
		\]
		Its orbits are precisely the fibers of
		$(\rho,t)\mapsto\operatorname{im}j_t^\rho$.
		All convolutions use $\Delta_Q$.
	\end{theorem}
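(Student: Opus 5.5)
I will prove (i)$\Rightarrow$(ii)$\Rightarrow$(iii)$\Rightarrow$(i), and then read off the uniqueness and the group-action statements.

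\emph{(i)$\Rightarrow$(ii).} By Lemma~\ref{lem:quotient-character-injectivity}, both $j_t^\rho:A_\sigma\to K^\sigma$ and $j_u^{\rho'}:A'_\sigma\to K^\sigma$ are injective. By Corollary~\ref{cor:deformed-character-evaluation}, both are morphisms in $\Alg(\Comod(K^\sigma))$. If their images agree, set
\[
T=(j_u^{\rho'})^{-1}\circ j_t^\rho,
\]
where the inverse is taken on the common image. Then $T$ is a bijective unital algebra map. It is colinear because the common image is a subcomodule of $K^\sigma$ and both maps are colinear and injective. Moreover $u\circ T=\eps_K j_u^{\rho'}T=\eps_K j_t^\rho=t$.

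\emph{(ii)$\Rightarrow$(iii).} By Lemma~\ref{lem:quotient-coaction-morphisms}, any such $T$ equals $U_r$ with $r=\eps_Q T\in\Alg(Q,k)$ and $\rho'=\rho_r$. It remains to translate $u\circ U_r=t$ into a convolution identity. Since $u\circ U_r=r*u$, this gives $r*u=t$, hence $u=r^{-1}*t$ in the convolution group $\Hom(Q,k)$, where $r^{-1}=r\circ S_Q$.

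\emph{(iii)$\Rightarrow$(i).} Given $r$, Lemma~\ref{lem:quotient-coaction-morphisms} shows that $U_r:A_\sigma\to A'_\sigma$ is an isomorphism in $\Alg(\Comod(K^\sigma))$. From $u=r^{-1}*t$ we get $u\circ U_r=r*r^{-1}*t=t$. Then colinearity gives
\[
j_u^{\rho'}U_r=(uU_r\otimes\id)\rho=j_t^\rho .
\]
Since $U_r$ is surjective, the images coincide.

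\emph{Uniqueness.} Injectivity forces $T=(j_u^{\rho'})^{-1}j_t^\rho$, and $r=\eps_Q T$ by Lemma~\ref{lem:quotient-coaction-morphisms}.

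\emph{Group action.} I need to check three things.

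First, $(\rho_r,r^{-1}*t)$ is again an admissible pair. Lemma~\ref{lem:quotient-coaction-morphisms} shows $\rho_r$ satisfies \eqref{eq:quotient-coaction}. For the character, note $r^{-1}*t=t\circ U_{r^{-1}}$, and $U_{r^{-1}}:(Q,\rho_r)_\sigma\to(Q,\rho)_\sigma$ is a morphism in $\Alg(\Comod(K^\sigma))$; composing a character with an algebra map gives a character.

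Second, the action axioms. Here the main bookkeeping obstacle is getting the order of composition right. Using $U_rU_s=U_{s*r}$, one computes $(\rho_r)_s=\rho_{r*s}$ and $s^{-1}*(r^{-1}*t)=(r*s)^{-1}*t$. So the rule $s\cdot(r\cdot x)=(r*s)\cdot x$ is an action of the opposite group $\Alg(Q,k)^{\op}$; I will verify these identities directly.

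Third, freeness and orbits. If $r\cdot(\rho,t)=(\rho,t)$, then $r^{-1}*t=t$. Taking $\rho'=\rho$ and $u=t$, both $r$ and the identity character $\eps_Q$ satisfy (iii), so uniqueness of $r$ gives $r=\eps_Q$. Finally, the equivalence (i)$\Leftrightarrow$(iii) says exactly that the orbits are the fibers of $(\rho,t)\mapsto\operatorname{im}j_t^\rho$.
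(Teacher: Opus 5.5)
Your proposal is correct and follows the paper's proof essentially step by step. The key moves match: you set $T=(j_u^{\rho'})^{-1}\circ j_t^\rho$ using injectivity from Lemma~\ref{lem:quotient-character-injectivity}, obtain $T=U_r$ from Lemma~\ref{lem:quotient-coaction-morphisms} together with $u\circ U_r=r*u$, and use the same action identities. Your freeness argument, via uniqueness of $r$ in (iii) with $(\rho',u)=(\rho,t)$, is only a repackaging of the paper's direct deduction $j_t^\rho U_r=j_t^\rho\Rightarrow U_r=\id_Q\Rightarrow r=\eps_Q$. For completeness, you should also state that $\eps_Q$ acts trivially, since $U_{\eps_Q}=\id_Q$.
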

	\begin{proof}
		By Lemma~\ref{lem:quotient-character-injectivity},
		$j_t^\rho$ and $j_u^{\rho'}$ are injective.
		
		If (i) holds, then
		\[
		T=(j_u^{\rho'})^{-1}\circ j_t^\rho
		\]
		is an isomorphism in
		$\Alg(\Comod(K^\sigma))$, with the inverse
		taken on the common image. Applying $\eps_K$
		to $j_u^{\rho'}T=j_t^\rho$ gives $uT=t$.
		
		Conversely, (ii) gives
		\[
		j_u^{\rho'}T
		=(u\otimes\id_K)\rho'T
		=((uT)\otimes\id_K)\rho
		=j_t^\rho,
		\]
		proving (i). Injectivity of $j_u^{\rho'}$
		also gives the uniqueness of $T$.
		
		By Lemma~\ref{lem:quotient-coaction-morphisms},
		such an isomorphism is $T=U_r$ for a unique
		$r\in\Alg(Q,k)$ with $\rho'=\rho_r$.
		Moreover,
		\[
		uU_r=t
		\iff
		u=tU_{r^{-1}}=r^{-1}*t.
		\]
		Thus (ii) and (iii) are equivalent.
		
		The same lemma gives
		\[
		r^{-1}*t=tU_{r^{-1}}
		\in\Alg((Q,\rho_r)_\sigma,k),
		\]
		so the stated operation is well-defined.
		Since
		$\rho_rU_r=(U_r\otimes\id_K)\rho$,
		\begin{align*}
			j_{r^{-1}*t}^{\rho_r}U_r
			&=((tU_{r^{-1}})\otimes\id_K)
			\rho_rU_r\\
			&=(tU_{r^{-1}}U_r\otimes\id_K)\rho\\
			&=j_t^\rho.
		\end{align*}
		In particular,
		\[
		\operatorname{im}j_{r^{-1}*t}^{\rho_r}
		=\operatorname{im}j_t^\rho.
		\]
		
		The identities
		\[
		(\rho_s)_r=\rho_{s*r},\qquad
		r^{-1}*(s^{-1}*t)=(s*r)^{-1}*t
		\]
		give
		\[
		r\cdot(s\cdot(\rho,t))
		=(s*r)\cdot(\rho,t).
		\]
		Together with the identity character $\eps_Q$,
		these establish an action of
		$\Alg(Q,k)^{\op}$.
		
		If $r\cdot(\rho,t)=(\rho,t)$, the preceding
		intertwining identity gives
		\[
		j_t^\rho U_r=j_t^\rho.
		\]
		Injectivity of $j_t^\rho$ implies
		$U_r=\id_Q$, hence $r=\eps_Q$.
		Thus the action is free. Finally, the
		equivalence of (i) and (iii) shows that
		its orbits are precisely the fibers of
		$(\rho,t)\mapsto\operatorname{im}j_t^\rho$.
	\end{proof}

	\begin{corollary}
		\label{cor:character-evaluation-extension}
		Under the assumptions and notation of
		Lemma~\ref{lem:quotient-coaction-morphisms},
		the map $\Gamma_\rho=(\eps_Q\otimes\id_K)\rho$
		induces an injective morphism
		$A_\sigma\to F_\sigma(K)$.
		For $r\in\Alg(Q,k)$,
		\[
		E_r(x)=(r\circ\pi)(x_1)x_2
		\]
		defines an automorphism of $F_\sigma(K)$
		in $\Alg(\Comod(K^\sigma))$ satisfying
		\begin{equation}\label{eq:character-extension}
			E_r\Gamma_\rho=\Gamma_{\rho_r}U_r.
		\end{equation}
	\end{corollary}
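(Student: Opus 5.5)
The corollary has three parts: injectivity of $\Gamma_\rho$, the automorphism property of $E_r$, and the intertwining identity \eqref{eq:character-extension}. I would prove them in that order.

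\textbf{Step 1: $\Gamma_\rho$ is an injective morphism.} By Remark~\ref{rem:quotient-coaction-section}, $\Gamma_\rho$ is a morphism in $\Alg(\Comod(K))$ from $A=(Q,\rho)$ to $(K,\Delta_K)$. It is injective because $\pi\Gamma_\rho=\id_Q$. Lemma~\ref{lem:coideal-deformation}, applied with kernel $I=0$, then gives an injective morphism $A_\sigma\to F_\sigma(K)$ in $\Alg(\Comod(K^\sigma))$.

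\textbf{Step 2: $E_r$ is an automorphism of $F_\sigma(K)$.} Put $f=r\circ\pi\in\Alg(K,k)$; this is a character of $K$, with convolution inverse $f^{-1}=r\circ S_Q\circ\pi=r^{-1}\circ\pi$. Then $E_r=U_f$ in the notation of the proof of Theorem~\ref{thm:coideal-character-injectivity}.
\begin{itemize}
\item That proof shows $U_f$ is right $K$-colinear and that $U_fU_g=U_{g*f}$. Hence $E_r$ is a colinear linear automorphism with inverse $E_{r^{-1}}$.
\item Since $f$ is multiplicative and $\Delta_K$ is an algebra map, $E_r$ is a unital algebra automorphism of $K$, exactly as in the computation for $U_r$ in Lemma~\ref{lem:quotient-coaction-morphisms}.
\item So $E_r$ is an automorphism of $(K,\Delta_K)$ in $\Alg(\Comod(K))$. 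Applying $F_\sigma$, which preserves underlying linear maps, makes it an automorphism of $F_\sigma(K)$ in $\Alg(\Comod(K^\sigma))$.
\end{itemize}
If one prefers a direct check, $E_r(x\cdot_\sigma y)=E_r(x)\cdot_\sigma E_r(y)$ follows from colinearity of $E_r$ together with \eqref{eq:regular-deformation}.

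\textbf{Step 3: the identity \eqref{eq:character-extension}.} I would compare both sides after applying $\eps_K$, using the correspondence in the proof of Theorem~\ref{thm:coideal-character-injectivity} between colinear maps into $(K,\Delta_K)$ and functionals. Both $E_r\Gamma_\rho$ and $\Gamma_{\rho_r}U_r$ are $K$-colinear maps $(Q,\rho)\to(K,\Delta_K)$.
\begin{itemize}
\item For the left side, colinearity of $\Gamma_\rho$ gives $E_r\Gamma_\rho=((f\Gamma_\rho)\otimes\id)\rho$. Since $\pi\Gamma_\rho=\id_Q$, we have $f\Gamma_\rho=r$, so $E_r\Gamma_\rho=(r\otimes\id_K)\rho$.
\item For the right side, $\Gamma_{\rho_r}U_r=(\eps_Q\otimes\id)\rho_rU_r=(\eps_Q\otimes\id)(U_r\otimes\id)\rho=((\eps_QU_r)\otimes\id)\rho$.
\item Since $\eps_QU_r=r$, this also equals $(r\otimes\id_K)\rho$.
\end{itemize}
Hence the two sides agree. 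The step needing most care is the sign and side convention in $U_fU_g=U_{g*f}$, which governs the inverse of $E_r$. The identity itself only uses $\rho_rU_r=(U_r\otimes\id)\rho$ and $\pi\Gamma_\rho=\id_Q$.
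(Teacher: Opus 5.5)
Your proposal is correct and follows essentially the same route as the paper. $\Gamma_\rho$ is obtained from Remark~\ref{rem:quotient-coaction-section} and transported by $F_\sigma$, and $E_r$ is the winding automorphism $U_{r\circ\pi}$ of $(K,\Delta_K)$ transported by $F_\sigma$. Both sides of \eqref{eq:character-extension} reduce to $(r\otimes\id_K)\rho$. Your extra details, namely the inverse $E_{r^{-1}}$ via $U_fU_g=U_{g*f}$ and the identity $\rho_rU_r=(U_r\otimes\id_K)\rho$, make explicit steps the paper leaves implicit.
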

	
	\begin{proof}
		By Remark~\ref{rem:quotient-coaction-section},
		$\Gamma_\rho$ is an injective morphism
		in $\Alg(\Comod(K))$. Applying $F_\sigma$
		gives an injective morphism
		\[
		\Gamma_\rho:A_\sigma
		\longrightarrow F_\sigma(K).
		\]
		
		The winding automorphism
		$U_{r\circ\pi}$ of $K$ is a morphism
		in $\Alg(\Comod(K))$. Applying $F_\sigma$
		gives $E_r$, an automorphism of
		$F_\sigma(K)$; see also
		\cite[Corollary~3.1.4]{Sch04}.
		
		Finally,
		\begin{align*}
			E_r\Gamma_\rho
			=(r\circ\pi\otimes\id_K)\Delta_K\Gamma_\rho
			=(r\otimes\id_K)\rho
			=\Gamma_{\rho_r}U_r.
		\end{align*}
		
	\end{proof}

	\begin{proposition}\label{prop:group-algebras}
		Let $G$ be a group and $K=kG$.
		\begin{enumerate}
			\item Let $\alpha:G\times G\to k^\times$ be a
			normalized group $2$-cocycle, and let $\sigma$
			be its linear extension to $K\otimes K$.
			For a subgroup $H\leq G$, put $A=kH$,
			and let $\Gamma:A\hookrightarrow K$ be
			the inclusion. Then $K^\sigma=K$, and
			\[
			\Gamma:A_\sigma\to K^\sigma
			\text{ is an algebra map}
			\iff \alpha|_{H\times H}=1.
			\]
			
			\item For the trivial cocycle, let
			$M\subseteq G$ be a submonoid containing $1$,
			and  $B=kM$. For $t,u\in\Alg(B,k)$,
			write
			\[
			M_t=\{m\in M:t(m)\neq0\},\qquad
			M_u=\{m\in M:u(m)\neq0\}.
			\]
			Then $B$ is a right coideal subalgebra of $K$,
			and
			\[
			j_t(m)=t(m)m,\qquad
			\operatorname{im}j_t=kM_t.
			\]
			Moreover,
			\[
			\begin{aligned}
				j_t\text{ is injective}
				&\iff M_t=M,\qquad
				\operatorname{im}j_t=\operatorname{im}j_u
				&\iff M_t=M_u.
			\end{aligned}
			\]
		\end{enumerate}
	\end{proposition}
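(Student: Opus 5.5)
For part (1), I will first check that $K^\sigma=K$. Every element of $G$ is group-like, so for $x,y\in G$ formula \eqref{eq:hopf-product} reduces to $x\star_\sigma y=\alpha(x,y)\,xy\,\alpha(x,y)^{-1}=xy$. Here I use that the convolution inverse of $\sigma$ on group elements is $\alpha(x,y)^{-1}$, since $\Delta(x)=x\otimes x$. Next, $\Gamma$ is injective with image $B=kH$, so Proposition~\ref{prop:Gamma-hopf-target} reduces the claim to the condition $\sigma|_{kH\otimes kH}=\eps\otimes\eps$. Evaluating on a basis pair $(h,h')$ gives exactly $\alpha(h,h')=1$. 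The converse follows by linearity.

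For part (2), I will begin with the structural claims.
\begin{itemize}
\item $B=kM$ is a subalgebra because $M$ is a submonoid containing $1$.
\item $B$ is a right coideal because $\Delta(m)=m\otimes m\in B\otimes K$.
\item The coaction on $B$ is the restriction of $\Delta$, so $j_t(m)=t(m)m$.
\end{itemize}
Since $M$ consists of distinct group elements, the set $\{m\}_{m\in M}$ is linearly independent. It follows that $\operatorname{im}j_t=\operatorname{span}\{t(m)m\}=kM_t$.

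Injectivity of $j_t$ is then the statement that no basis vector is killed, that is, $t(m)\neq0$ for all $m\in M$, which means $M_t=M$. Alternatively, this follows from the pointed case of Theorem~\ref{thm:coideal-character-injectivity}, since $G(K)\cap B=M$. For the equality of images, $kM_t=kM_u$ holds if and only if $M_t=M_u$, because a span of a subset of a basis determines that subset.

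I do not expect a real obstacle; the main points needing care are these:
\begin{itemize}
\item identifying $\sigma^{-1}$ on group elements in part (1);
\item noting in part (2) that linear independence of group elements makes all identifications valid;
\item checking that $M_t$ is itself a submonoid, so that $kM_t$ really is a subalgebra. This holds because $t$ is multiplicative and $k$ is a field.
\end{itemize}
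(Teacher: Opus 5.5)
Your proposal is correct and follows essentially the paper's route: group-likeness gives $g\star_\sigma g'=gg'$ and $h\cdot_\sigma h'=\alpha(h,h')^{-1}hh'$, and linear independence of $G$ in $kG$ settles every claim in part (2). The only difference is cosmetic: in part (1) you invoke Proposition~\ref{prop:Gamma-hopf-target}, whereas the paper compares these two products directly, which is the same check.
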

	
	\begin{proof}
		For $h,h'\in H$ and $g,g'\in G$,
		\[
		h\cdot_\sigma h'
		=\alpha(h,h')^{-1}hh',
		\qquad
		g\star_\sigma g'=gg'.
		\]
		This proves (1). For (2),
		$\Delta(m)=m\otimes m$ gives
		$j_t(m)=t(m)m$.
		The remaining assertions follow from
		the linear independence of $M$ in $kG$.
	\end{proof}

	\begin{remark}\label{rem:twisted-characters}
		Cocycle deformation need not preserve algebra characters.
		Let $A=K=k[x^{\pm1},y^{\pm1}]$ with group-like
		$x,y$, and choose $q\in k^\times$ with $q\neq1$.
		The cocycle
		\[
		\sigma(x^my^n,x^py^s)=q^{np}
		\]
		gives 
		$x\cdot_\sigma y=q\,y\cdot_\sigma x$;
		compare \cite[Example~3.7]{GKM12}.
		Since $x$ and $y$ remain invertible in $A_\sigma$,
		this relation implies that $A_\sigma$ has no
		characters, whereas $A$ has characters.
		
		In this example $K^\sigma=K$.
		For every $r\in\Alg(A,k)$, the undeformed map
		$j_r$ has image $K$, so the nontrivial restriction
		of $\sigma$ also rules out survival of $r$ by
		\eqref{eq:character-survival}.
	\end{remark}
	
	\begin{corollary}\label{cor:coideal-subalgebras}
		The assignment $B\mapsto B_\sigma$ is a bijection
		from the unital right coideal subalgebras of $K$
		to the unital right $K^\sigma$-comodule subalgebras
		of $F_\sigma(K)$, keeping each underlying subspace
		unchanged. It preserves and reflects inclusions,
		and hence preserves intersections and joins,
		where a join is the subalgebra generated by a union.
	\end{corollary}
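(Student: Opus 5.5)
The plan is to show that a subspace $B\subseteq K$ is a unital right coideal subalgebra of $K$ exactly when it is a unital right $K^\sigma$-comodule subalgebra of $F_\sigma(K)$. Since $K$ and $K^\sigma$ share the same coalgebra, and $F_\sigma(K)$ carries the coaction $\Delta_K$ by Proposition~\ref{prop:bigalois}, the condition $\Delta_K(B)\subseteq B\otimes K$ is identical on both sides. Likewise, $F_\sigma(K)$ has the same unit $1$, since $\sigma^{-1}$ is normalized. So the only issue is closure under multiplication, given that $B$ is a right coideal containing $1$.

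For the forward direction, I take a unital right coideal subalgebra $B$ and apply Lemma~\ref{lem:coideal-deformation} to the inclusion $\Gamma:B\hookrightarrow K$. This is a morphism in $\Alg(\Comod(K))$, and the lemma shows that its image $B_\sigma$ is a subalgebra of $F_\sigma(K)$. Concretely, for $b,c\in B$ the right coideal property gives
\[
b\cdot_\sigma c=b_1c_1\sigma^{-1}(b_2,c_2)\in BB\subseteq B,
\]
and the structure on $B_\sigma$ is exactly the restriction of $\cdot_\sigma$.

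For the converse, I use the inverse deformation $F_{\sigma^{-1}}$. If $C\subseteq F_\sigma(K)$ is a right $K^\sigma$-subcomodule closed under $\cdot_\sigma$ and containing $1$, then the original product of $K$ is recovered from $\cdot_\sigma$ by
\[
xy=x_1\cdot_\sigma y_1\,\sigma(x_2,y_2).
\]
This holds because $\sigma^{-1}*\sigma=\eps\otimes\eps$, or equivalently because $F_{\sigma^{-1}}F_\sigma(K)=K$ as comodule algebras, as recalled after \eqref{eq:tensorator}. Since $C$ is a right coideal, this formula gives $CC\subseteq C$. The two assignments are therefore mutually inverse and leave underlying subspaces unchanged.

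Because the bijection is the identity on subspaces, it preserves and reflects inclusions, so it is an isomorphism of posets. Intersections are meets in both posets: an intersection of right coideal subalgebras is again one, as is an intersection of comodule subalgebras. A poset isomorphism preserves meets and joins, so it preserves intersections. For joins, I also need the subalgebra generated by a union of right coideal subalgebras to be a right coideal, since then it is the lattice join. This holds because products of coideal elements satisfy $\Delta(b_1\cdots b_n)\in b_{1,1}\cdots b_{n,1}\otimes K$. Hence the join is the lattice supremum on each side, and the poset isomorphism preserves it.

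The one point that needs care is the converse closure formula, that is, verifying that $\cdot_{\sigma^{-1}}$ on $F_\sigma(K)$ reproduces the original multiplication. This follows from coassociativity and the convolution identity, and no other step is expected to be an obstacle.
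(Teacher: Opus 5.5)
Your proposal is correct and follows the paper's proof. In both, the key step is recovering the original product on a comodule subalgebra $C\subseteq F_\sigma(K)$ via $F_{\sigma^{-1}}$, namely $xy=x_1\cdot_\sigma y_1\,\sigma(x_2,y_2)$, with the forward direction supplied by Lemma~\ref{lem:coideal-deformation}. Your treatment of intersections and joins is more careful than the paper's one-line remark. On the $F_\sigma(K)$ side you only asserted that the generated subalgebra is the supremum: the $\cdot_\sigma$-generated subalgebra of a union of subcomodules is again a subcomodule because $\cdot_\sigma$ is $K^\sigma$-colinear, which is what makes it the supremum there.
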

	
	\begin{proof}
		Apply $F_{\sigma^{-1}}$ to a right
		$K^\sigma$-comodule subalgebra
		$C\subseteq F_\sigma(K)$. For $x,y\in C$,
		the recovered product is
		\[
		x_1\cdot_\sigma y_1\,\sigma(x_2,y_2)
		=
		x_1y_1\sigma^{-1}(x_2,y_2)\sigma(x_3,y_3)
		=
		xy.
		\]
		Thus $C$ is a right coideal subalgebra of $K$.
		This construction is inverse to $B\mapsto B_\sigma$.
		Since both assignments preserve inclusions,
		they also preserve intersections and joins.
	\end{proof}
	
	\begin{corollary}[Subalgebras generated by subcomodules]
		\label{cor:coideal-generators}
		Let $V\subseteq K$ be a right subcomodule.
		Then the subalgebra generated by $V$ in $K$
		and the subalgebra generated by $V$ in
		$F_\sigma(K)$ have the same underlying subspace.
	\end{corollary}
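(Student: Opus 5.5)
The plan is to deduce this directly from Corollary~\ref{cor:coideal-subalgebras}. Let $R$ be the subalgebra of $K$ generated by $V$, and $R'$ the subalgebra of $F_\sigma(K)$ generated by $V$. Both $K$ and $F_\sigma(K)$ have unit $1$: in $F_\sigma(K)$ one has $1\cdot_\sigma x=x_1\sigma^{-1}(1,x_2)=x$ by normalization. So we may treat both as unital subalgebras; if generation is meant non-unitally, the same argument works after replacing $V$ by $V+k1$, which is again a subcomodule.

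The first step is to show that $R$ is a right coideal of $K$. It is spanned by $1$ and products $v^{(1)}\cdots v^{(n)}$ with $v^{(i)}\in V$. Since $\Delta$ is an algebra map and $\Delta(V)\subseteq V\otimes K$, we get
\[
\Delta\bigl(v^{(1)}\cdots v^{(n)}\bigr)\in R\otimes K .
\]
Thus $R$ is a unital right coideal subalgebra of $K$. By Corollary~\ref{cor:coideal-subalgebras}, $R_\sigma$ is a right $K^\sigma$-comodule subalgebra of $F_\sigma(K)$ with the same underlying space and containing $V$. Hence $R'\subseteq R$.

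The second step shows that $R'$ is a right $K^\sigma$-subcomodule of $F_\sigma(K)$. Multiplication in $F_\sigma(K)$ is $K^\sigma$-colinear, because $F_\sigma(K)$ is a right $K^\sigma$-comodule algebra (Proposition~\ref{prop:bigalois}). The coaction is $\Delta_K$, and $V$ is a subcomodule for it. So iterated $\cdot_\sigma$-products of elements of $V$ have coaction landing in $R'\otimes K^\sigma$, and $R'$ is a unital $K^\sigma$-comodule subalgebra of $F_\sigma(K)$. By the bijection in Corollary~\ref{cor:coideal-subalgebras}, $R'$ is then a right coideal subalgebra of $K$ under the original product. It contains $V$, so $R\subseteq R'$, and therefore $R=R'$ as subspaces.

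The only point needing care is the colinearity of iterated products in each algebra, which reduces to multiplicativity of the coaction. Beyond that, the argument is a routine application of the preservation of inclusions in Corollary~\ref{cor:coideal-subalgebras}.
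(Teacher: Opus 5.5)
Your proof is correct and is essentially the paper's argument: the paper observes that both generated subalgebras are the least comodule subalgebras containing $V$ and invokes the inclusion-preserving bijection of Corollary~\ref{cor:coideal-subalgebras}, and you fill in why each generated subalgebra is a subcomodule, via multiplicativity of $\Delta_K$ and of the $K^\sigma$-coaction on $F_\sigma(K)$. One small correction to your aside: replacing $V$ by $V+k1$ does not handle a non-unital reading, since it turns the non-unitally generated subalgebra into the unital one; that reading follows directly instead, because the identities $x\cdot_\sigma y=x_1y_1\sigma^{-1}(x_2,y_2)$ and $xy=x_1\cdot_\sigma y_1\,\sigma(x_2,y_2)$ show that a right coideal is closed under one product if and only if it is closed under the other.
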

	
	\begin{proof}
		The two generated algebras are the least
		comodule subalgebras containing $V$, so the
		assertion follows from
		Corollary~\ref{cor:coideal-subalgebras}.
	\end{proof}
	
	Compare \cite[Lemmas~3.1--3.2 and Theorem~3.3]{GKM12},
	which treat generators and defining relations under
	cocycle twisting for left comodule algebras.

	For the regular right $K$-Galois object $K$, the right
	Miyashita--Ulbrich action is the right adjoint action.
	We next examine the stability of comodule subalgebras
	under the Miyashita--Ulbrich action after cocycle deformation.

	\begin{lemma}\label{lem:MU-stability}
		Let $B\subseteq K$ be a unital right coideal subalgebra.
		Then $B_\sigma\subseteq F_\sigma(K)$ is stable under
		the right Miyashita--Ulbrich action of $K^\sigma$
		if and only if
		\[
		S_K(h_1)bh_2\in B,
		\qquad b\in B,\ h\in K.
		\]
	\end{lemma}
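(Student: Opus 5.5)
The plan is to compute the right Miyashita--Ulbrich action of $K^\sigma$ on the right Galois object $F_\sigma(K)$ explicitly. Then I would compare it with the right adjoint action $b\mapsto S_K(h_1)bh_2$ of $K$ on $K$, using that $B$ is a right coideal and that the twisting scalars come from convolution-invertible functionals.

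\emph{Step 1 (the inverse canonical map).} I would use the crossed-product (cleft) description of $F_\sigma(K)$ from the proof of Proposition~\ref{prop:bigalois}. The candidate for $\can_R^{-1}(1\otimes h)$ is $S_K(h_2)\otimes h_3$ corrected by $\sigma$-scalars. Concretely, I would look for $h^{[1]}\otimes h^{[2]}=\sigma(S_K(h_1),h_2)\,S_K(h_3)\otimes h_4$, possibly also with a factor $\sigma^{-1}$ on outer legs. I would pin the formula down by imposing $h^{[1]}\cdot_\sigma h^{[2]}{}_1\otimes h^{[2]}{}_2=1\otimes h$. Expanding $\cdot_\sigma$ by \eqref{eq:regular-deformation} and using $S_K(h_1)h_2=\eps(h)$ leaves a scalar of the form $\sigma(S h_1,h_2)\sigma^{-1}(S h_3,h_4)$. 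This scalar collapses by the convolution identity $\sigma*\sigma^{-1}=\eps\otimes\eps$ together with the cocycle identity \eqref{eq:cocycle}. This verification is the computational heart, and I expect it to be the main obstacle: the $\sigma$-scalars must be arranged so that the cocycle identity applies cleanly.

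\emph{Step 2 (formula for the action).} Substituting into $a\leftharpoonup h=h^{[1]}\cdot_\sigma a\cdot_\sigma h^{[2]}$ and expanding both deformed products gives an expression
\[
a\leftharpoonup h=\Lambda(h_1,a_2,h_5)\,S_K(h_2)\,a_1\,h_3\quad(\text{schematically}),
\]
where $\Lambda$ is a scalar built from $\sigma^{\pm1}$ evaluated on later Sweedler legs of $h$ and $a$.

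\emph{Step 3 (the ``if'' direction).} Assume $B$ is stable under the adjoint action and let $b\in B$. Since $B$ is a right coideal, $b_1\otimes b_2\in B\otimes K$. Hence each term $S_K(h')\,b_1\,h''$ lies in $B$, and so $b\leftharpoonup h\in B$.

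\emph{Step 4 (the ``only if'' direction).} By Proposition~\ref{prop:transport} and the subsequent discussion, $\sigma^{-1}$ is a cocycle on $K^\sigma$ and $F_{\sigma^{-1}}(F_\sigma(K))$ recovers $K$. Moreover, the Miyashita--Ulbrich action of $K$ on the regular Galois object $K$ is the right adjoint action. So the symmetric formula from Step~2 expresses $S_K(h_1)bh_2$ as a $\sigma$-twisted combination of elements $b_1\leftharpoonup h'$ with $b_1\in B$. Stability of $B_\sigma$ under $\leftharpoonup$ then gives stability of $B$ under the adjoint action.

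Alternatively, one can invert Step~2 directly. The scalars there form a convolution-invertible functional in the $h$-variables, so convolving with its inverse recovers $S_K(h_1)bh_2$ as a combination of the $b_1\leftharpoonup h'$.

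If the explicit formula in Step~1 proves unwieldy, a fallback is to avoid it entirely. In that route I would characterise $\leftharpoonup$ by the identity $(a\leftharpoonup h)\cdot_\sigma$-relations $h^{[1]}\cdot_\sigma a=(a\leftharpoonup h_1)\ldots$, and argue purely via the bijection of Corollary~\ref{cor:coideal-subalgebras} between comodule subalgebras before and after deformation.
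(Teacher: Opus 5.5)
Your Step~3 is fine in outline, but Step~1 is wrong as written, and Step~4 leaves the ``only if'' direction unproved.

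\textbf{Step~1.} With your candidate, the leftover scalar $\sigma(S_K(h_1),h_2)\,\sigma^{-1}(S_K(h_3),h_4)$ equals $(w*v')(h)$, where $w(h)=\sigma(S_K(h_1),h_2)$ and $v'(h)=\sigma^{-1}(S_K(h_1),h_2)$. In general $w$ is not a convolution inverse of $v'$. For the cocycle of Proposition~\ref{prop:generalized-double} one gets $(w*v')(u\otimes a)=\lambda(u,a_2S_V(a_1))$, and this differs from $\eps(u)\eps(a)$ when $\lambda$ is nondegenerate and $S_V^2\neq\id$. So the collapse you expect does not happen. Since $S_K(h_1)\cdot_\sigma h_2=v'(h)1$, the correct element is
\[
\can_R^{-1}(1\otimes h)=\gamma^{-1}(h_1)\otimes h_2,\qquad \gamma^{-1}(h)=\sigma(h_1,S_K(h_2))\,S_K(h_3).
\]
Here $\gamma^{-1}$ is the convolution inverse of the cleaving map $\id:K^\sigma\to F_\sigma(K)$; equivalently, $h\mapsto\sigma(h_1,S_K(h_2))$ is the convolution inverse of $v'$. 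This gives
\[
b\leftharpoonup_\sigma h=\sigma(h_1,S_K(h_2))\,\sigma^{-1}(S_K(h_3),b_3)\,\sigma^{-1}(S_K(h_4)b_2,h_7)\,S_K(h_5)b_1h_6 ,
\]
which has the shape you want, so Step~3 goes through.

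\textbf{Step~4 (the genuine gap).} The symmetry argument fails because the situation is not symmetric. Going back along $F_{\sigma^{-1}}$ starts from $F_\sigma(K)$, which is not the regular Galois object of $K^\sigma$, so Steps~1--2 do not transfer. Expanding via $xy=x_1\cdot_\sigma y_1\,\sigma(x_2,y_2)$ only gives
\[
S_K(h_1)bh_2=\sigma(S_K(h_2),b_2)\,\sigma(S_K(h_1)b_3,h_5)\,\bigl(S_K(h_3)\cdot_\sigma b_1\cdot_\sigma h_4\bigr).
\]
The elements $S_K(h')\cdot_\sigma b_1\cdot_\sigma h''$ are not Miyashita--Ulbrich actions, because $\can_R^{-1}$ for $F_\sigma(K)$ carries an extra scalar.

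The alternative of inverting Step~2 directly is not meaningful as stated. The scalars there entangle the legs of $h$ with those of $b$ (e.g.\ through $S_K(h_4)b_2$), so there is no convolution algebra ``in the $h$-variables'' in which to invert them. The fallback via Corollary~\ref{cor:coideal-subalgebras} says nothing about the action.

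The missing identity is $S_K(y)=v'(y_1)\gamma^{-1}(y_2)$. You get it by evaluating $S_K(y_1)\cdot_\sigma y_2\cdot_\sigma\gamma^{-1}(y_3)$ in two ways. It yields $S_K(y_1)\cdot_\sigma b\cdot_\sigma y_2=v'(y_1)\,(b\leftharpoonup_\sigma y_2)$. Hence the display above is a combination of the elements $b_1\leftharpoonup_\sigma h'$, and so lies in $B$.

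\textbf{Comparison with the paper.} With these repairs your route is valid, but the paper avoids both issues. It never computes $\can_R^{-1}$. Instead it sets $T_\beta(x\otimes h)=x_1\otimes h_1\beta(x_2,h_2)$, $c(x\otimes h)=h_1\otimes S_K(h_2)xh_3$ and $c_\sigma(x\otimes h)=h_1\otimes(x\leftharpoonup_\sigma h_2)$. It then shows $c_\sigma=T_\sigma cT_{\sigma^{-1}}$, using only $mc=m$ and $h^{[1]}\cdot_\sigma h^{[2]}_1\otimes h^{[2]}_2=1\otimes h$. Since $T_{\sigma^{\pm1}}$ are mutually inverse and both preserve $B\otimes K$ and $K\otimes B$, the two directions follow at once.
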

	
	\begin{proof}
		For $\beta=\sigma,\sigma^{-1}$, define
		\[
		T_\beta(x\otimes h)
		=x_1\otimes h_1\beta(x_2,h_2).
		\]
		Write $\leftharpoonup_\sigma$ for the right
		Miyashita--Ulbrich action of $K^\sigma$ on
		$F_\sigma(K)$, and put
		\[
		c(x\otimes h)
		=h_1\otimes S_K(h_2)xh_3,\qquad
		c_\sigma(x\otimes h)
		=h_1\otimes(x\leftharpoonup_\sigma h_2).
		\]
		
		The convolution identities give
		$T_\sigma^{-1}=T_{\sigma^{-1}}$.
		Put $d=T_\sigma cT_{\sigma^{-1}}$.
		A direct expansion gives
		\[
		d(x\otimes h)=h_1\otimes a(x,h_2),
		\qquad a=(\eps_K\otimes\id)d.
		\]
		Let $m$ and $m_\sigma=mT_{\sigma^{-1}}$
		denote the multiplications of $K$ and
		$F_\sigma(K)$, respectively.
		Since $mc=m$,
		\[
		m_\sigma d
		=mT_{\sigma^{-1}}T_\sigma
		cT_{\sigma^{-1}}
		=mT_{\sigma^{-1}}
		=m_\sigma.
		\]
		Thus $x\cdot_\sigma y=y_1\cdot_\sigma a(x,y_2)$.
		Recall that $\can_R^{-1}(1\otimes h)=h^{[1]}\otimes h^{[2]}.$
		
		Since
		\[
		h^{[1]}\cdot_\sigma h^{[2]}_1
		\otimes h^{[2]}_2=1\otimes h,
		\]
		we obtain
		\[
		h^{[1]}\cdot_\sigma x\cdot_\sigma h^{[2]}
		=h^{[1]}\cdot_\sigma h^{[2]}_1
		\cdot_\sigma a(x,h^{[2]}_2)
		=a(x,h).
		\]
		Hence
		\[
		a(x,h)=x\leftharpoonup_\sigma h,
		\qquad
		c_\sigma=T_\sigma cT_{\sigma^{-1}}.
		\]
		
		Since $B$ is a right coideal,
		both $T_\sigma$ and $T_{\sigma^{-1}}$
		preserve $B\otimes K$ and $K\otimes B$.
		Consequently,
		\[
		c_\sigma(B\otimes K)\subseteq K\otimes B
		\iff
		c(B\otimes K)\subseteq K\otimes B.
		\]
		The first inclusion is equivalent to the
		stability of $B_\sigma$ under
		$\leftharpoonup_\sigma$, while the second
		is equivalent to
		\[
		S_K(h_1)bh_2\in B,
		\qquad b\in B,\ h\in K.
		\]
		Indeed, stability implies the respective
		inclusion, and the converse follows
		by applying $\eps_K\otimes\id$.
	\end{proof}

	\begin{corollary}\label{cor:group-MU-stability}
		Let $G$ be a group with a normalized group
		$2$-cocycle $\alpha:G\times G\to k^\times$,
		and let $\sigma$ be the corresponding Hopf
		$2$-cocycle on $K=kG$.
		For any submonoid $M\subseteq G$ containing $1$,
		set $B=kM$. Then $B_\sigma\subseteq F_\sigma(K)$
		is stable under the right Miyashita--Ulbrich action
		if and only if
		\[
		g^{-1}Mg=M,\qquad g\in G.
		\]
		In particular, for a subgroup $H\leq G$,
		$(kH)_\sigma$ is stable if and only if
		$H$ is normal in $G$.
	\end{corollary}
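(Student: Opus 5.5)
The plan is to apply Lemma~\ref{lem:MU-stability} directly. That lemma says $B_\sigma$ is stable under the right Miyashita--Ulbrich action of $K^\sigma$ exactly when $B$ is stable under the undeformed right adjoint action $b\mapsto S_K(h_1)bh_2$. So the cocycle $\alpha$ plays no role in the answer, and everything reduces to a computation in $kG$ with the trivial cocycle. Here $B=kM$ is a unital right coideal subalgebra because $\Delta(m)=m\otimes m$ and $M$ is a submonoid containing $1$; this was already observed in Proposition~\ref{prop:group-algebras}.

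Next I will compute the adjoint action on basis elements. For $g\in G$ and $m\in M$ one has $S(g)mg=g^{-1}mg$. Since $G$ spans $K$ and the action is linear in $h$, the stability condition is equivalent to $g^{-1}mg\in kM$ for all $g\in G$ and $m\in M$. Because $g^{-1}mg$ is a single group element and $G$ is linearly independent in $kG$, this is the same as $g^{-1}Mg\subseteq M$ for all $g\in G$. Applying this inclusion to $g^{-1}$ gives $gMg^{-1}\subseteq M$, that is, $M\subseteq g^{-1}Mg$. Hence the condition is equivalent to $g^{-1}Mg=M$ for every $g$. The converse direction is immediate, since equality implies the inclusion, and the inclusion gives stability on the basis and therefore on all of $kM$ by linearity.

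For the final assertion, I will take $M=H$ to be a subgroup. The condition $g^{-1}Hg=H$ for all $g$ is precisely normality of $H$.

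No step here is a real obstacle. The only point that needs care is recording that the Miyashita--Ulbrich condition of Lemma~\ref{lem:MU-stability} needs to be checked only for $h$ in a spanning set, here $G$. This holds because the expression $S_K(h_1)bh_2$ is linear in $h$.
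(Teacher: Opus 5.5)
Your argument is correct, but it takes a different route from the paper. You invoke Lemma~\ref{lem:MU-stability} to replace the deformed Miyashita--Ulbrich action by the undeformed adjoint action $S(g)mg=g^{-1}mg$, so the cocycle disappears before any computation.

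The paper's proof does not cite that lemma. It computes $\can_R^{-1}(1\otimes g)=\alpha(g^{-1},g)\,g^{-1}\otimes g$ directly in $F_\sigma(kG)$. Using the cocycle identity, it obtains the explicit formula
\[
m\leftharpoonup_\sigma g=\frac{\alpha(g,g^{-1}mg)}{\alpha(m,g)}\,g^{-1}mg ,
\]
and concludes because this scalar is nonzero. From there both arguments finish the same way: linear independence of $G$, then applying the inclusion to $g^{-1}$.

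Your route is shorter and makes it transparent that the answer is independent of $\alpha$. The paper's route is self-contained and gives the deformed action explicitly. In effect, it checks the identity $c_\sigma=T_\sigma cT_{\sigma^{-1}}$ from the proof of Lemma~\ref{lem:MU-stability} in this special case.
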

	
	\begin{proof}
		Since $K^\sigma=K$ by
		Proposition~\ref{prop:group-algebras},
		the right canonical map gives
		\[
		\can_R^{-1}(1\otimes g)
		=\alpha(g^{-1},g)\,g^{-1}\otimes g.
		\]
		Thus, for $m,g\in G$, the right
		Miyashita--Ulbrich action is
		\[
		m\leftharpoonup_\sigma g
		=
		\frac{\alpha(g^{-1},g)}
		{\alpha(g^{-1},m)\alpha(g^{-1}m,g)}
		\,g^{-1}mg
		=
		\frac{\alpha(g,g^{-1}mg)}{\alpha(m,g)}
		\,g^{-1}mg,
		\]
		where the second equality follows from the
		cocycle identity on $(g^{-1},m,g)$ and
		$(g,g^{-1},mg)$, together with
		$\alpha(g,g^{-1})=\alpha(g^{-1},g)$.
		For the trivial cocycle, this reduces to
		$m\leftharpoonup g=g^{-1}mg$.
		
		Since the scalar coefficients are nonzero,
		stability is equivalent to
		$g^{-1}Mg\subseteq M$ for all $g\in G$.
		Applying the same inclusion to $g^{-1}$
		gives equality.
	\end{proof}

	\section{\texorpdfstring{$\imath$}{i}-Hopf algebras and Drinfeld doubles}
	\label{sec:ihopf}
	
	Fix a Hopf algebra $H$ with bijective antipode $S$
	and a Hopf pairing $\phi:H\otimes H\to k$. Write
	\[
	\EndHopf(H)=\Hom_{\mathrm{Hopf}}(H,H),\qquad
	\AutHopf(H)
	=\{f\in\EndHopf(H)\mid f\text{ is bijective}\},
	\]
	where $\Hom_{\mathrm{Hopf}}$ denotes Hopf algebra maps.
	Fix $\tau\in\EndHopf(H)$ and put
	$K=H\otimes H^{\cop}$. Consider the map
	\[
	\Gamma_\tau:H\to K,\qquad
	\Gamma_\tau(h)=\tau(h_2)\otimes h_1.
	\]
	We first recall the pairing constructions that
	define the Hopf $2$-cocycle on $K$ and the
	associated $\imath$-Hopf algebras.

	\subsection{Skew-pairings and generalized quantum doubles}
	For this construction from a skew-pairing by Hopf $2$-cocycle deformation, see \cite{DT94} and also \cite{KS05}.
	\begin{definition}
		Let $U$ and $V$ be Hopf algebras, with bijective antipode on
		$V$. A \emph{skew-pairing}
		$\lambda:U\otimes V\to k$ satisfies
		\begin{align}
			\lambda(uv,a)
			&=
			\lambda(u,a_1)\lambda(v,a_2),
			\label{eq:skew-first}\\
			\lambda(u,ab)
			&=
			\lambda(u_2,a)\lambda(u_1,b),
			\label{eq:skew-second}\\
			\lambda(u,1)&=\eps(u),
			\qquad
			\lambda(1,a)=\eps(a).
		\end{align}
	\end{definition}
	
	Its convolution inverse is
	\[
	\lambda^{-1}(u,a)
	=
	\lambda(S_U(u),a)
	=
	\lambda(u,S_V^{-1}(a)).
	\]
	
	\begin{proposition}[Generalized quantum doubles]
		\label{prop:generalized-double}
		The linear map
		\begin{equation}\label{eq:pairing-cocycle}
			\sigma(u\otimes a,v\otimes b)
			=
			\eps(u)\lambda(v,a)\eps(b)
		\end{equation}
		is a Hopf $2$-cocycle on $U\otimes V$.
		The Hopf $2$-cocycle deformation
		$(U\otimes V)^\sigma$ is the generalized quantum double
		associated with $U,V$, and $\lambda$, with multiplication
		\begin{equation}\label{eq:general-double}
			(u\otimes a)\star_\sigma(v\otimes b)
			=
			u\,\lambda(v_1,a_1)v_2
			\otimes
			a_2\,\lambda^{-1}(v_3,a_3)b.
		\end{equation}
	\end{proposition}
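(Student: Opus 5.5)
We have to prove two things: that $\sigma$ is a Hopf $2$-cocycle on $U\otimes V$, and that the deformed product $\star_\sigma$ of Proposition~\ref{prop:transport} is given by \eqref{eq:general-double}. We do these in order.

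\emph{Normalization and invertibility.} Normalization follows at once from $\lambda(u,1)=\eps(u)$ and $\lambda(1,a)=\eps(a)$. For invertibility, the coproduct on $U\otimes V$ is
\[
\Delta(u\otimes a)=(u_1\otimes a_1)\otimes(u_2\otimes a_2).
\]
Hence the candidate inverse
\[
\sigma^{-1}(u\otimes a,v\otimes b)=\eps(u)\lambda^{-1}(v,a)\eps(b)
\]
is a convolution inverse, because $\lambda*\lambda^{-1}=\lambda^{-1}*\lambda=\eps\otimes\eps$ on $U\otimes V$. The stated formula $\lambda^{-1}(u,a)=\lambda(S_U(u),a)=\lambda(u,S_V^{-1}(a))$ is the standard consequence of \eqref{eq:skew-first}--\eqref{eq:skew-second}. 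We take it as given, though it can be checked quickly.

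\emph{Cocycle identity.} Take $x=u\otimes a$, $y=v\otimes b$, $z=w\otimes c$. The left side of \eqref{eq:cocycle} is
\[
\sigma(x_1,y_1)\sigma(x_2y_2,z)=\eps(u_1)\lambda(v_1,a_1)\eps(b_1)\,\eps(u_2v_2)\lambda(w,a_2b_2)\eps(c),
\]
which simplifies to $\eps(u)\eps(c)\lambda(v_1,a_1)\lambda(w,a_2b)$. Apply \eqref{eq:skew-second} to $\lambda(w,a_2b)$; it becomes $\lambda(w_2,a_2)\lambda(w_1,b)$. Applying \eqref{eq:skew-first} then yields
\[
\eps(u)\eps(c)\,\lambda(w_1,b)\,\lambda(v w_2,a).
\]
The right side is $\sigma(y_1,z_1)\sigma(x,y_2z_2)=\eps(v_1)\lambda(w_1,b_1)\eps(c_1)\,\eps(u)\lambda(v_2w_2,a)\eps(b_2c_2)$. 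It simplifies to the same expression, since $\lambda(w_1,b)\lambda(vw_2,a)$ appears on both sides. This step is purely bookkeeping. The only care needed is the order of tensor factors in \eqref{eq:skew-second}, which is the reason the Sweedler indices $w_1,w_2$ swap.

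\emph{Deformed product.} Substitute into \eqref{eq:hopf-product}, using the coproduct of $U\otimes V$ in three legs:
\[
(u\otimes a)\star_\sigma(v\otimes b)=\sigma(u_1\otimes a_1,v_1\otimes b_1)\,(u_2v_2\otimes a_2b_2)\,\sigma^{-1}(u_3\otimes a_3,v_3\otimes b_3).
\]
The counit factors $\eps(u_1)$, $\eps(b_1)$, $\eps(u_3)$, $\eps(b_3)$ collapse, leaving
\[
\lambda(v_1,a_1)\,u v_2\otimes a_2 b\,\lambda^{-1}(v_3,a_3),
\]
which is \eqref{eq:general-double} after moving the scalars.

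The main obstacle is only the index bookkeeping in the cocycle identity. Identifying this deformation with the generalized quantum double of \cite{DT94} is a matter of definition, and we cite \cite{DT94,KS05} for it.
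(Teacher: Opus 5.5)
Your proof is correct. The paper gives no argument of its own for this proposition; it only refers to \cite{DT94} and \cite{KS05} for the construction. Your direct check therefore supplies the verification the paper leaves to the literature: normalization, the two-sided convolution inverse $\eps(u)\lambda^{-1}(v,a)\eps(b)$, the cocycle identity \eqref{eq:cocycle}, and the product formula \eqref{eq:general-double}.

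There are two small points. In the cocycle computation, after absorbing $\eps(v_2)$ the left side should read $\eps(u)\eps(c)\lambda(v,a_1)\lambda(w,a_2b)$, with $v$ rather than $v_1$. Your next line, $\lambda(vw_2,a)$, already uses the correct form.

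The invertibility of $\lambda$, which you take as given, follows in one line from \eqref{eq:skew-first}:
\[
\lambda(u_1,a_1)\lambda(S_U(u_2),a_2)=\lambda(u_1S_U(u_2),a)=\eps(u)\eps(a),
\]
and the other side is symmetric. So only $S_U$ is needed for invertibility. Bijectivity of $S_V$ enters only through the alternative expression $\lambda(u,S_V^{-1}(a))$, which comes from \eqref{eq:skew-second}.
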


	\subsection{Hopf pairings and \texorpdfstring{$\imath$}{i}-Hopf algebras}
	
	Following \cite[Section~2.1]{CLPRW}, the \emph{Hopf pairing}
	$\phi:H\otimes H\to k$ satisfies
	\begin{align}
		\phi(a,1)&=\eps(a),&
		\phi(1,b)&=\eps(b),\notag\\
		\phi(a,bb')&=\phi(a_1,b)\phi(a_2,b'),&
		\phi(aa',b)&=\phi(a,b_1)\phi(a',b_2),
		\label{eq:pairing}\\
		\phi(a,S(b))&=\phi(S(a),b). \notag
	\end{align}
	
	The $\imath$-Hopf algebra associated with $(H,\phi)$, denoted
	$H^\imath$, is the vector space $H$ with multiplication
	\[
	a*b=\phi(b_2,a_1)a_2b_1;
	\]
	see \cite[Definition~2.1]{CLPRW}. It is an associative algebra with
	unit $1$ by \cite[Proposition~2.2]{CLPRW}.
	
	By \cite[Lemma~2.5]{CLPRW},
	$\phi\circ(\tau\otimes\id)$ is again a Hopf pairing. Following
	\cite[Section~2.4]{CLPRW}, we denote the corresponding
	$\imath$-Hopf algebra by $H^\imath_\tau$. Its multiplication is
	\begin{equation}\label{eq:iproduct}
		a*_\tau b
		=
		\phi(\tau(b_2),a_1)a_2b_1.
	\end{equation}
	
	\begin{remark}
		In general, $H^\imath$ is only an associative algebra. The
		$\imath$-Hopf algebra of diagonal type below carries an
		additional Hopf algebra structure.
	\end{remark}

	\subsection{\texorpdfstring{$\imath$}{i}-Hopf algebras of diagonal type}
	
	Following \cite[Section~2.2]{CLPRW}, endow the tensor product Hopf
	algebra $H\otimes H$ with the Hopf pairing
	\[
	\phi_\sharp(c\otimes d,a\otimes b)
	=
	\phi(a,d)\phi(c,b).
	\]
	The associated $\imath$-Hopf algebra
	$(H\otimes H)^\imath$ is called the
	\emph{$\imath$-Hopf algebra of diagonal type}. Its multiplication is
	\begin{equation}\label{eq:Jproduct}
		(u\otimes a)*(v\otimes b)
		=
		\phi(u_1,b_2)\phi(v_2,a_1)
		u_2v_1\otimes a_2b_1.
	\end{equation}
	
	By \cite[Proposition~2.7]{CLPRW},
	$(H\otimes H)^\imath$ is a Hopf algebra with
	\begin{align}
		\Delta^\imath(u\otimes a)
		&=
		\phi(u_2,a_2)
		(u_1\otimes a_3)\otimes(u_3\otimes a_1),
		\label{eq:Jdelta}\\
		\eps^\imath(u\otimes a)
		&=
		\phi(u,S^{-1}(a)),&
		S^\imath(u\otimes a)
		&=
		S(u)\otimes S^{-1}(a).
		\label{eq:Jcounit}
	\end{align}

	\subsection{The Drinfeld double}
	Set $K=H\otimes H^{\cop}$. The algebra $H^{\cop}$ has the
	multiplication of $H$, the opposite coproduct, and antipode $S^{-1}$.
	Unless indicated otherwise, Sweedler indices on elements of $H$
	refer to $\Delta_H$. Thus the coproduct of $K$ is
	\begin{equation}\label{eq:Kdelta}
		\Delta_K(u\otimes a)
		=
		(u_1\otimes a_2)\otimes(u_2\otimes a_1).
	\end{equation}
	
	Take $U=H$, $V=H^{\cop}$, and define
	\[
	\lambda(v,a)=\phi(v,S^{-1}(a)).
	\]
	In terms of the original coproduct of $H$,
	\begin{align*}
		\lambda(vw,a)
		&=
		\phi(v,S^{-1}(a_2))\phi(w,S^{-1}(a_1)),\\
		\lambda(v,ab)
		&=
		\phi(v_1,S^{-1}(b))\phi(v_2,S^{-1}(a)).
	\end{align*}
	Thus $\lambda$ is a skew-pairing, with
	\[
	\lambda^{-1}(v,a)
	=
	\phi(S(v),S^{-1}(a))
	=
	\phi(v,a).
	\]
	
	The associated Hopf $2$-cocycle on $K$ is
	\begin{align}
		\sigma(u\otimes a,v\otimes b)
		&=
		\eps(u)\phi(v,S^{-1}(a))\eps(b),
		\label{eq:sigma}\\
		\sigma^{-1}(u\otimes a,v\otimes b)
		&=
		\eps(u)\phi(v,a)\eps(b).
		\label{eq:sigma-inverse}
	\end{align}
	By Proposition~\ref{prop:generalized-double}, the multiplication of
	$K^\sigma$ is
	\begin{equation}\label{eq:Dproduct}
		(u\otimes a)\star_\sigma(v\otimes b)
		=
		\phi(v_1,S^{-1}(a_3))\phi(v_3,a_1)
		uv_2\otimes a_2b.
	\end{equation}
	
	\begin{proposition}\label{prop:cocycle-double}
		The identity map on $H\otimes H$ defines a Hopf algebra
		isomorphism $K^\sigma\xrightarrow{\sim}D(H)$,
		where $D(H)$ is the Drinfeld double of
		\cite[Section~2.3]{CLPRW}.
	\end{proposition}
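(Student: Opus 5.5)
Since the identity map is trivially a bijection and $K^\sigma$ has the coalgebra of $K=H\otimes H^{\cop}$, the plan is to compare structure maps one at a time with the Drinfeld double $D(H)$ of \cite[Section~2.3]{CLPRW}. First I will write out the structure of $D(H)$ from \cite{CLPRW}: underlying space $H\otimes H$, comultiplication $(u\otimes a)\mapsto(u_1\otimes a_2)\otimes(u_2\otimes a_1)$, counit $\eps\otimes\eps$, and a multiplication in which $H\otimes1$ and $1\otimes H$ are subalgebras and the cross relation is twisted by $\phi$ and $\phi\circ(\id\otimes S^{-1})$. The coalgebra comparison with \eqref{eq:Kdelta} should then be immediate. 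If \cite{CLPRW} writes the second factor with $\Delta^{\mathrm{op}}$ or places the two factors in the opposite order, I will record that first, so that the identity map really is the correct comparison.

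The main step is the multiplication. By \eqref{eq:Dproduct} with $a=b=1$ or $u=v=1$, I get $(u\otimes1)\star_\sigma(v\otimes b)=uv\otimes b$ and $(u\otimes a)\star_\sigma(1\otimes b)=u\otimes ab$. Hence $u\otimes a=(u\otimes1)\star_\sigma(1\otimes a)$, and both $H\otimes 1$ and $1\otimes H$ are copies of $H$. By associativity, the product in $K^\sigma$ is therefore determined by the exchange rule
\[
(1\otimes a)\star_\sigma(v\otimes1)=\phi(v_1,S^{-1}(a_3))\phi(v_3,a_1)\,v_2\otimes a_2 .
\]
The same reduction holds in $D(H)$. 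So it will suffice to check that this exchange rule coincides with the defining cross relation of $D(H)$ in \cite{CLPRW}.

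I expect this comparison to be the only real obstacle. The paper's conventions put $\phi(v,S^{-1}(a))$ in the skew-pairing $\lambda$, and $\lambda^{-1}=\phi$. The rule in \cite{CLPRW} may instead be stated in the equivalent form $(v\otimes1)(1\otimes a)$ expressed through $(1\otimes a)(v\otimes1)$, or with $S$ in place of $S^{-1}$. To handle this, I will use the pairing axioms \eqref{eq:pairing}, together with $\phi(S(a),S(b))=\phi(a,b)$, to move antipodes between the two arguments. If the rule is stated in the reverse direction, I will multiply through by convolution inverses, using that $\lambda$ and $\lambda^{-1}$ are mutually inverse, and obtain the other form.

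Once the algebra and coalgebra structures agree, the identity map is a bialgebra isomorphism, and antipodes are unique, so the antipodes agree as well. Finally, \eqref{eq:Dproduct} is already a consequence of Proposition~\ref{prop:generalized-double}, because \eqref{eq:general-double} expands with $\lambda(v_1,a_1)$ computed in $H^{\cop}$, giving $a_3$ in $H$. The expected length is a few lines once the \cite{CLPRW} convention is fixed.
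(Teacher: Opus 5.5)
Your proposal is correct and follows the paper's proof. The paper also notes that the coalgebras agree and that (D1)--(D3) hold, then checks the mixed relation (D4) in the form $\phi(v_1,a_2)\,(1\otimes a_1)\star_\sigma(v_2\otimes1)=\phi(v_2,a_1)\,v_1\otimes a_2$ by multiplying your exchange rule by $\phi(v_1,a_2)$ and cancelling via $\lambda^{-1}*\lambda=\eps\otimes\eps$; this is exactly your ``reverse direction'' contingency. One small slip: your two displayed identities come from the specializations $a=1$ and $v=1$ of \eqref{eq:Dproduct}, not $a=b=1$ and $u=v=1$.
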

	
	\begin{proof}
		The coalgebra \eqref{eq:Kdelta} agrees with that of $D(H)$.
		The multiplication \eqref{eq:Dproduct} satisfies relations
		{\rm(D1)}--{\rm(D3)} of \cite[Section~2.3]{CLPRW}. For the remaining
		mixed relation,
		\begin{align*}
			\phi(v_1,a_2)
			(1\otimes a_1)\star_\sigma(v_2\otimes1)
			&=
			\phi(v_1,a_4)\phi(v_2,S^{-1}(a_3))
			\phi(v_4,a_1)v_3\otimes a_2\\
			&=
			\phi(v_2,a_1)v_1\otimes a_2,
		\end{align*}
		where the last equality uses the Hopf-pairing identities and
		$\lambda^{-1}*\lambda=\eps\otimes\eps$. This is relation {\rm(D4)}.
	\end{proof}
	
	We henceforth identify $K^\sigma=D(H)$.  By \cite[Proposition~2.8]{CLPRW}, there is a Hopf algebra
	isomorphism
	\begin{equation}\label{eq:PhiD}
		\Phi^{\mathrm D}:D(H)\xrightarrow{\;\sim\;}
		(H\otimes H)^\imath,
		\qquad
		\Phi^{\mathrm D}(u\otimes a)
		=
		\phi(u_1,a_2)u_2\otimes a_1,
	\end{equation}
	with inverse
	\begin{equation}\label{eq:PhiDinv}
		(\Phi^{\mathrm D})^{-1}(u\otimes a)
		=
		\phi(u_1,S^{-1}(a_2))u_2\otimes a_1.
	\end{equation}

	Finally, the right coaction
	$\Psi:H^\imath_\tau\to H^\imath_\tau\otimes(H\otimes H)^\imath$
	of \cite[Proposition~2.11]{CLPRW} is given by
	\begin{equation}\label{eq:Psi}
		\Psi(h)
		=
		\phi(\tau(h_4),h_2)h_3
		\otimes(\tau(h_5)\otimes h_1).
	\end{equation}
	In the next section, the Hopf Galois realization of
	$H^\imath_\tau$ in $F_\sigma(K)$ recovers \eqref{eq:Psi} and
	its right $((H\otimes H)^\imath)$-comodule algebra structure.
	
	\section{The Hopf Galois realization of \texorpdfstring{$\imath$}{i}-Hopf algebras}
	\label{sec:realization}

	In this section we apply Lemma~\ref{lem:coideal-deformation} to
	$\Gamma_\tau$. The cocycle deformation of $H$ is thereby
	identified with $H^\imath_\tau$, and we obtain an embedding $\Gamma_\tau:H^\imath_\tau\hookrightarrow F_\sigma(K)$
	of right $K^\sigma$-comodule algebras. Under the isomorphism
	$\Phi^{\mathrm D}:K^\sigma\to(H\otimes H)^\imath$ of
	\eqref{eq:PhiD}, the induced coaction agrees with the coaction
	$\Psi$ of \cite[Proposition~2.11]{CLPRW}.
	
	\begin{lemma}\label{lem:graph}
		The map
		\[
		\Gamma_\tau:H\longrightarrow K,
		\qquad
		\Gamma_\tau(h)=\tau(h_2)\otimes h_1,
		\]
		belongs to $\Alg(H,K)$, is injective, and has a right coideal
		subalgebra as its image. Via $\Gamma_\tau$, the induced right
		$K$-coaction $\delta=\delta_\tau:H\to H\otimes K$ is
		\begin{equation}\label{eq:delta}
			\delta(h)
			=
			h_2\otimes\bigl(\tau(h_3)\otimes h_1\bigr).
		\end{equation}
	\end{lemma}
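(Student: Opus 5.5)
We verify the properties directly, then obtain the coaction from the general recipe $\rho=(\Gamma^{-1}\otimes\id)\Delta_K\Gamma$ recorded after Proposition~\ref{prop:Gamma-hopf-target}.

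\emph{Algebra map.} Multiplicativity follows because $\Delta_H$ and $\tau$ are algebra maps and $K=H\otimes H^{\cop}$ carries the tensor product algebra structure (the algebra of $H^{\cop}$ is that of $H$):
\[
\Gamma_\tau(hh')=\tau(h_2h'_2)\otimes h_1h'_1=\bigl(\tau(h_2)\otimes h_1\bigr)\bigl(\tau(h'_2)\otimes h'_1\bigr).
\]
Unitality is clear from $\Delta(1)=1\otimes1$ and $\tau(1)=1$.

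\emph{Injectivity.} Apply $\eps\otimes\id_H$ to $\Gamma_\tau(h)$. Since $\tau$ is a Hopf map, $\eps\tau=\eps$, and hence $(\eps\otimes\id)\Gamma_\tau(h)=\eps(h_2)h_1=h$. So $\Gamma_\tau$ has a linear left inverse.

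\emph{Coaction formula.} Using \eqref{eq:Kdelta} and $\Delta\tau=(\tau\otimes\tau)\Delta$, we compute
\[
\Delta_K\Gamma_\tau(h)=(\tau(h_3)\otimes h_2)\otimes(\tau(h_4)\otimes h_1).
\]
Here the second tensor factor of $\Gamma_\tau(h)$ is $h_1$, and its $H^{\cop}$-coproduct swaps the legs, which we must book-keep carefully. Relabeling gives
\[
\Delta_K\Gamma_\tau(h)=\Gamma_\tau(h_2)\otimes(\tau(h_3)\otimes h_1),
\]
since $\Gamma_\tau(h_2)=\tau(h_{2(2)})\otimes h_{2(1)}$ reproduces $\tau(h_3)\otimes h_2$ after renumbering. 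This identity shows at once that the image is a right coideal, since $\Delta_K(\operatorname{im}\Gamma_\tau)\subseteq\operatorname{im}\Gamma_\tau\otimes K$. The image is a subalgebra because $\Gamma_\tau$ is an algebra map.

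Finally, the induced coaction is $\delta=(\Gamma_\tau^{-1}\otimes\id)\Delta_K\Gamma_\tau$, which by the display equals $h_2\otimes(\tau(h_3)\otimes h_1)$. This is \eqref{eq:delta}, and by that remark it is the unique coaction making $\Gamma_\tau$ colinear.

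The only delicate step is the Sweedler index bookkeeping in the coproduct of $K$: the $H^{\cop}$ factor reverses order, and this is exactly what produces the pattern $h_2$, $h_3$, $h_1$ in \eqref{eq:delta}. Everything else is formal.
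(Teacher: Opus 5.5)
Your proposal is correct and matches the paper's proof step for step. Both arguments get multiplicativity from $\Delta_H$ and $\tau$ being algebra maps, and injectivity from the left inverse $\eps\otimes\id$. Both then use the identity $\Delta_K\Gamma_\tau=(\Gamma_\tau\otimes\id_K)\delta$ obtained from \eqref{eq:Kdelta}, which yields the right coideal property and the coaction formula together.
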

	
	\begin{proof}
		Since $\tau$ is an algebra homomorphism,
		\[
		\Gamma_\tau(ab)
		=
		\tau(a_2)\tau(b_2)\otimes a_1b_1
		=
		\Gamma_\tau(a)\Gamma_\tau(b),
		\qquad
		\Gamma_\tau(1)=1.
		\]
		Moreover, $(\eps\otimes\id)\Gamma_\tau=\id_H$, so $\Gamma_\tau$ is injective. Using \eqref{eq:Kdelta},
		\[
		\Delta_K\Gamma_\tau(h)
		=
		(\tau(h_3)\otimes h_2)
		\otimes
		(\tau(h_4)\otimes h_1)
		=
		(\Gamma_\tau\otimes\id_K)\delta(h).
		\]
		Thus $\Delta_K(\Gamma_\tau(H))\subseteq\Gamma_\tau(H)\otimes K$.
		The restricted coproduct therefore induces a coaction on $H$,
		and its formula is \eqref{eq:delta}.
	\end{proof}
	
	\begin{lemma}\label{lem:graph-stable-ideals}
		The right $K$-comodule algebra $(H,\delta)$ has no nonzero proper
		two-sided ideal that is a right $K$-subcomodule. The same holds
		for its deformation $(H,\delta)_\sigma$ as a right
		$K^\sigma$-comodule algebra.
	\end{lemma}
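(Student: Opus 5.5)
Let $J\subseteq H$ be a nonzero two-sided ideal of $H$ which is also a right $K$-subcomodule for $\delta=\delta_\tau$. I will show that $J$ contains an element of nonzero counit, and then argue exactly as in Lemma~\ref{lem:quotient-character-injectivity}.

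The first step is to project the coaction onto the second tensor factor of $K=H\otimes H^{\cop}$. Consider the Hopf algebra map $\pi=\eps\otimes\id:K\to H^{\cop}$. Applying $\id_H\otimes\pi$ to \eqref{eq:delta} gives
\[
(\id_H\otimes\pi)\delta(h)=h_2\otimes h_1,
\]
which is the coproduct $\Delta_{H^{\cop}}$. So $(H,\delta)$ is exactly of the form treated in Lemma~\ref{lem:quotient-character-injectivity}, with $Q=H^{\cop}$ and the surjective Hopf map $\pi$. Since $J$ is a $K$-subcomodule, we get $\Delta^{\cop}(J)\subseteq J\otimes H$, that is, $h_2\otimes h_1\in J\otimes H$ for $h\in J$.

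Next I apply $\eps$ to the second factor. This gives $h=(\id\otimes\eps)(h_2\otimes h_1)$, so the inclusion is consistent. To produce a nonzero counit, I apply $\eps$ to the first factor instead: the elements $\eps(h_2)h_1=h$ lie in the span of the second legs, which shows that the counit cannot vanish identically on $J$. Indeed, if $\eps(J)=0$, then $(\eps\otimes\id)(J\otimes H)=0$, while $(\eps\otimes\id)\Delta^{\cop}(h)=h$, forcing $J=0$.

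Now choose $q\in J$ with $\eps(q)\neq0$. I need an identity expressing $\eps(q)1$ through elements of $J$ multiplied on one side. Using the antipode $S^{-1}$ of $H^{\cop}$ (which exists since $S$ is bijective), we have $\eps(q)1=q_2S^{-1}(q_1)$. Here $q_2\otimes q_1\in J\otimes H$ and $J$ is a right ideal, so $\eps(q)1\in J$. Hence $J=H$, which proves the first claim. Alternatively one can simply invoke the proof of Lemma~\ref{lem:quotient-character-injectivity} verbatim with $Q=H^{\cop}$.

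For the deformation $(H,\delta)_\sigma$, Lemma~\ref{lem:deformation-ideals} shows that a $K$-subcomodule (equivalently a $K^\sigma$-subcomodule, since the coaction is unchanged) is a two-sided ideal of $(H,\delta)_\sigma$ if and only if it is a two-sided ideal of $H$. The second claim therefore follows from the first.

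The only delicate point is the orientation of Sweedler legs in $H^{\cop}$, which determines whether one needs $S$ or $S^{-1}$. Bijectivity of $S$ covers either case; with the opposite orientation one can use left ideality together with $S(q_1)q_2$ instead.
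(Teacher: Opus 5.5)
Your proof is correct and follows essentially the same route as the paper. You project $\delta$ along $\pi=\eps\otimes\id:K\to H^{\cop}$, use the counit to get $\eps|_J\neq0$, apply an antipode identity with one-sided ideality to obtain $1\in J$, and transfer to $(H,\delta)_\sigma$ via Lemma~\ref{lem:deformation-ideals}; the paper does the same, except that it rewrites the inclusion as $\Delta_H(J)\subseteq H\otimes J$ and uses $S(j_1)j_2=\eps(j)1$ with left ideality, whereas you use $q_2S^{-1}(q_1)=\eps(q)1$ with right ideality, exactly as in Lemma~\ref{lem:quotient-character-injectivity}.
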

	
	\begin{proof}
		Let $J$ be a nonzero two-sided ideal and a right
		$K$-subcomodule of $(H,\delta)$.
		The Hopf algebra map
		$\pi:K\to H^{\cop}$, given by
		$\pi(u\otimes a)=\eps(u)a$, satisfies
		\[
		(\id\otimes\pi)\delta(h)=h_2\otimes h_1.
		\]
		Hence $\Delta_H(J)\subseteq H\otimes J$.
		The counit identity implies $\eps|_J\neq0$.
		Choose $j\in J$ with $\eps(j)\neq0$.
		Since $J$ is a left ideal, $S(j_1)j_2=\eps(j)1\in J$, 
		so $J=H$. The assertion for the deformation
		follows from Lemma~\ref{lem:deformation-ideals}.
	\end{proof}

	Set $B=\Gamma_\tau(H)$. 
	Then
	\[
	\Gamma_\tau:(H,\delta)\xrightarrow{\sim}B
	\]
	is an isomorphism in $\Alg(\Comod(K))$. Since $K$ and
	$K^\sigma$ have the same coalgebra, we use the same symbol
	$\delta$ for the corresponding right $K^\sigma$-coaction.
	
	\begin{proposition}[Realization of $H^\imath_\tau$]
		\label{prop:ihopf-deformation}
		$\Gamma_\tau$ induces an isomorphism in
		$\Alg(\Comod(K^\sigma))$
		\begin{equation}\label{eq:main-isomorphism}
			\Gamma_\tau:
			(H^\imath_\tau,\delta)
			\xrightarrow{\;\sim\;}
			B_\sigma .
		\end{equation}
	\end{proposition}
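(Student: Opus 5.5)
The plan is to reduce the statement to Lemma~\ref{lem:coideal-deformation} by identifying the deformed comodule algebra $(H,\delta)_\sigma$ with $H^\imath_\tau$. By Lemma~\ref{lem:graph}, $\Gamma_\tau:(H,\delta)\to(K,\Delta_K)$ is an injective morphism in $\Alg(\Comod(K))$ with image $B$. Lemma~\ref{lem:coideal-deformation} then says that
\[
\Gamma_\tau:(H,\delta)_\sigma\to F_\sigma(K)
\]
is a morphism in $\Alg(\Comod(K^\sigma))$ with kernel $0$ and image $B_\sigma$. Hence it induces an isomorphism $(H,\delta)_\sigma\xrightarrow{\sim}B_\sigma$ of right $K^\sigma$-comodule algebras. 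Since $F_\sigma$ leaves the underlying space and coaction unchanged, it remains only to check that the multiplication of $(H,\delta)_\sigma$ is exactly $*_\tau$ from \eqref{eq:iproduct}, with the same unit $1$.

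For this I will compute with \eqref{eq:transport}, \eqref{eq:delta} and \eqref{eq:sigma-inverse}. For $a,b\in H$,
\[
a\cdot_\sigma b
= a_0b_0\,\sigma^{-1}(a_1,b_1)
= a_2b_2\,\sigma^{-1}\bigl(\tau(a_3)\otimes a_1,\ \tau(b_3)\otimes b_1\bigr).
\]
By \eqref{eq:sigma-inverse}, the cocycle factor equals
\[
\eps(\tau(a_3))\,\phi(\tau(b_3),a_1)\,\eps(b_1).
\]
Next I use $\eps\circ\tau=\eps$, which holds because $\tau$ is a Hopf map, and then the counit identities to absorb $a_3$ and $b_1$. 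After renumbering the Sweedler indices this gives
\[
a\cdot_\sigma b=\phi(\tau(b_2),a_1)\,a_2b_1=a*_\tau b .
\]
The units agree, since $\delta(1)=1\otimes 1$ and $\sigma^{-1}$ is normalized.

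The only point needing care is the Sweedler bookkeeping. The second tensor factor of $K$ carries the coopposite coproduct, and $\delta$ places $h_1$ in the $H^{\cop}$-slot. I must therefore make sure that $\sigma^{-1}$ pairs the $\tau$-part of the second argument with the $H^{\cop}$-part of the first argument, and in the order $\phi(\tau(b_{\cdot}),a_{\cdot})$ as in \eqref{eq:iproduct}, not the reverse. Once this matching is confirmed against \eqref{eq:sigma-inverse}, the proposition follows by combining the computation with Lemma~\ref{lem:coideal-deformation}.
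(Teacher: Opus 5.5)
Your proposal is correct and follows essentially the same route as the paper. You identify $(H,\delta)_\sigma$ with $H^\imath_\tau$ via the same Sweedler computation using \eqref{eq:transport}, \eqref{eq:delta} and \eqref{eq:sigma-inverse}, and then invoke Lemma~\ref{lem:coideal-deformation} with $I=0$ to obtain the isomorphism onto $B_\sigma$.
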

	
	\begin{proof}
		Under the comodule-algebra deformation of the right
		$K$-comodule algebra $(H,\delta)$, the multiplication is
		\begin{align*}
			a\cdot_\sigma b
			&=
			a_2b_2\,
			\sigma^{-1}
			\bigl(
			\tau(a_3)\otimes a_1,
			\tau(b_3)\otimes b_1
			\bigr)\\
			&=
			a_2b_2\,
			\eps(\tau(a_3))
			\phi(\tau(b_3),a_1)
			\eps(b_1)\\
			&=
			\phi(\tau(b_2),a_1)a_2b_1
			=
			a*_\tau b.
		\end{align*}
		Thus $(H,\delta)_\sigma=(H^\imath_\tau,\delta)$.
		Lemma~\ref{lem:coideal-deformation} now gives
		\eqref{eq:main-isomorphism}.
	\end{proof}

	By Lemma~\ref{lem:graph-stable-ideals} and
	Proposition~\ref{prop:ihopf-deformation}, $(H^\imath_\tau,\delta)$
	has no nonzero proper two-sided ideal that is a subcomodule.
	By Proposition~\ref{prop:cocycle-double}, the Hopf $2$-cocycle deformation $K^\sigma$ identifies with the Drinfeld double $D(H)$.
	
	\begin{lemma}[Comparison of coactions]
		\label{lem:coaction-comparison}
		Under the identification $K^\sigma=D(H)$ and the Hopf algebra
		isomorphism
		\[
		\Phi^{\mathrm D}:
		D(H)\xrightarrow{\;\sim\;}(H\otimes H)^\imath
		\]
		of \cite[Proposition~2.8]{CLPRW}, the coaction $\delta$ becomes
		the coaction formula of \cite[Proposition~2.11]{CLPRW}. Namely,
		\begin{equation}\label{eq:main-coaction}
			(\id\otimes\Phi^{\mathrm D})\delta(h)
			=
			\phi(\tau(h_4),h_2)h_3
			\otimes(\tau(h_5)\otimes h_1)
			=
			\Psi(h).
		\end{equation}
	\end{lemma}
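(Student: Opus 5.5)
The identity \eqref{eq:main-coaction} reduces to a direct Sweedler computation: we apply $\Phi^{\mathrm D}$ from \eqref{eq:PhiD} to the second tensor factor of $\delta(h)$ in \eqref{eq:delta}. Since $\delta(h)=h_2\otimes(\tau(h_3)\otimes h_1)$, the element $u\otimes a$ fed into $\Phi^{\mathrm D}$ is $u=\tau(h_3)$, $a=h_1$. By \eqref{eq:PhiD},
\[
\Phi^{\mathrm D}(u\otimes a)=\phi(u_1,a_2)\,u_2\otimes a_1 .
\]

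The key steps come in this order.

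First, we expand the coproducts. Because $\tau$ is a coalgebra map, $\Delta_H\tau(h_3)=\tau(h_3)\otimes\tau(h_4)$ after reindexing. The element $a=h_1$ is split with the coproduct of $H$ itself, as \eqref{eq:PhiD} is written in terms of $\Delta_H$. Here we should double-check against \cite[Proposition~2.8]{CLPRW} that the Sweedler indices in \eqref{eq:PhiD} refer to $\Delta_H$ and not to $\Delta_{H^{\cop}}$; the paper's convention states that indices on elements of $H$ refer to $\Delta_H$.

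Second, we relabel by coassociativity. Writing $h_1\mapsto h_1\otimes h_2$, the remaining factors shift up by one: $h_2\mapsto h_3$ and $\tau(h_3)\mapsto\tau(h_4)\otimes\tau(h_5)$. This gives
\[
(\id\otimes\Phi^{\mathrm D})\delta(h)=\phi(\tau(h_4),h_2)\,h_3\otimes(\tau(h_5)\otimes h_1).
\]

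Third, we compare the result with \eqref{eq:Psi}. The two expressions agree term by term, which proves \eqref{eq:main-coaction}.

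The statement also asserts that this coaction makes $H^\imath_\tau$ an $(H\otimes H)^\imath$-comodule algebra. This follows without further computation. Proposition~\ref{prop:ihopf-deformation} shows that $(H^\imath_\tau,\delta)$ is a right $K^\sigma$-comodule algebra, and Proposition~\ref{prop:cocycle-double} identifies $K^\sigma$ with $D(H)$. Since $\Phi^{\mathrm D}$ is a Hopf algebra isomorphism, transporting the coaction along it preserves the comodule-algebra structure.

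The only real obstacle is bookkeeping. In $K=H\otimes H^{\cop}$ the second factor carries the opposite coproduct, so it is easy to confuse which coproduct is applied to $a=h_1$. If the indices in \eqref{eq:PhiD} were instead meant with respect to $\Delta_{H^{\cop}}$, the result would read $\phi(\tau(h_4),h_1)\,h_3\otimes(\tau(h_5)\otimes h_2)$ up to ordering. That does not match \eqref{eq:Psi}, and this mismatch confirms that the $\Delta_H$ convention is the one to use.
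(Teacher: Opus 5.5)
Your proposal is correct and is essentially the paper's proof: you substitute $u=\tau(h_3)$, $a=h_1$ into \eqref{eq:PhiD}, use that $\tau$ is a coalgebra map, and reindex by coassociativity to reach $\phi(\tau(h_4),h_2)h_3\otimes(\tau(h_5)\otimes h_1)$. Your final comparison with the $\Delta_{H^{\cop}}$ reading is unnecessary, because the paper already fixes that Sweedler indices on elements of $H$ refer to $\Delta_H$; choosing the convention by matching the target formula would also be circular.
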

	
	\begin{proof}
		Using \eqref{eq:delta} and the formula for $\Phi^{\mathrm D}$,
		\begin{align*}
			(\id\otimes\Phi^{\mathrm D})\delta(h)
			=
			h_2\otimes
			\Phi^{\mathrm D}(\tau(h_3)\otimes h_1)
			=
			\phi(\tau(h_4),h_2)h_3
			\otimes(\tau(h_5)\otimes h_1),
		\end{align*}
		which is the stated formula.
	\end{proof}

	\begin{theorem}[Hopf Galois realization]
		\label{thm:main}
		$\Gamma_\tau$ gives an injective morphism in
		$\Alg(\Comod(K^\sigma))$
		\begin{equation}\label{eq:galois-embedding}
			\Gamma_\tau:
			H^\imath_\tau
			\longrightarrow
			F_\sigma(K).
		\end{equation}
		$F_\sigma(K)$ is a
		$(K,K^\sigma)$-Hopf bi-Galois object, and hence in particular a
		right $K^\sigma$-Galois object.
	\end{theorem}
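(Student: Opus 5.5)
The plan is to assemble the statement from results already established, with no new computation. By Lemma~\ref{lem:graph}, $\Gamma_\tau:H\to K$ is an injective unital algebra map whose image $B$ is a right coideal subalgebra. Moreover, $\Gamma_\tau$ is right $K$-colinear for the coaction $\delta=\delta_\tau$ of \eqref{eq:delta}, i.e.\ $\Delta_K\Gamma_\tau=(\Gamma_\tau\otimes\id_K)\delta$. Thus $\Gamma_\tau:(H,\delta)\to(K,\Delta_K)$ is a morphism in $\Alg(\Comod(K))$ with kernel $I=0$, which is exactly the situation of Lemma~\ref{lem:coideal-deformation}.

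First, I will apply Lemma~\ref{lem:coideal-deformation}, or equivalently the monoidal functor $F_\sigma$, to this morphism. This yields a morphism
\[
\Gamma_\tau:(H,\delta)_\sigma\longrightarrow F_\sigma(K)
\]
in $\Alg(\Comod(K^\sigma))$ with the same underlying linear map. Its kernel is still $I=0$ and its image is $B_\sigma$. Next, I will invoke Proposition~\ref{prop:ihopf-deformation}, whose computation with $\sigma^{-1}$ from \eqref{eq:sigma-inverse} identifies $(H,\delta)_\sigma$ with $(H^\imath_\tau,\delta)$ as right $K^\sigma$-comodule algebras. Since the underlying vector space and coaction are unchanged, the source can be replaced by $H^\imath_\tau$. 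Injectivity needs no separate argument: it is inherited from $(\eps\otimes\id)\Gamma_\tau=\id_H$, because $F_\sigma$ does not alter linear maps. This gives \eqref{eq:galois-embedding}.

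For the second sentence, Proposition~\ref{prop:bigalois} states that $F_\sigma(K)$, with the left and right coactions induced by $\Delta_K$, is a $(K,K^\sigma)$-Hopf bi-Galois object. By definition, a bi-Galois object is in particular a right $K^\sigma$-Galois object. It remains to note that the right $K^\sigma$-coaction on $F_\sigma(K)$ used in Proposition~\ref{prop:bigalois} is the regular one, $x\mapsto x_1\otimes x_2$. This is the same coaction as the target of the morphism above, so the two assertions of the theorem refer to the same comodule algebra.

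I do not expect a genuine obstacle, since everything reduces to earlier results. The only point needing care is bookkeeping of structures. The target must be taken to be $F_\sigma(K)$ with its one-sided product \eqref{eq:regular-deformation}, not $K^\sigma$ with \eqref{eq:Dproduct}. By Corollary~\ref{cor:graph-hopf-target}, or Proposition~\ref{prop:Gamma-hopf-target}, the map into $K^\sigma$ is multiplicative only under an extra vanishing condition on $\phi\circ(\tau\otimes\id)$. Multiplicativity into $F_\sigma(K)$, by contrast, is the unconditional identity \eqref{eq:deformed-Gamma}.
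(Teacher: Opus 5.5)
Your proposal is correct and takes essentially the same approach as the paper. You apply Lemma~\ref{lem:coideal-deformation} to $\Gamma_\tau$ (kernel $0$ by Lemma~\ref{lem:graph}), identify $(H,\delta)_\sigma$ with $H^\imath_\tau$ via Proposition~\ref{prop:ihopf-deformation}, and take the bi-Galois assertion from Proposition~\ref{prop:bigalois}; your caveat that the target must be $F_\sigma(K)$ rather than $K^\sigma$ is the right one.
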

	
	\begin{proof}
		Apply Lemma~\ref{lem:coideal-deformation} to
		$\Gamma_\tau$ and $\sigma$, using
		Proposition~\ref{prop:ihopf-deformation} to identify the
		deformed algebra with $H^\imath_\tau$.
	\end{proof}

	\begin{corollary}\label{cor:graph-hopf-target}
		For $a,b\in H$, one has
		\[
		\sigma^{-1}(\Gamma_\tau(a),\Gamma_\tau(b))
		=\phi(\tau(b),a).
		\]
		The map $\Gamma_\tau:H^\imath_\tau\to D(H)$ is an algebra map
		if and only if
		\[
		\phi(\tau(b),a)=\eps(a)\eps(b),\qquad a,b\in H.
		\]
	\end{corollary}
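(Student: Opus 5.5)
The plan has two parts. First I compute the value of $\sigma^{-1}$ on the image $B=\Gamma_\tau(H)$ directly. Then I invoke Proposition~\ref{prop:Gamma-hopf-target}, in the form of Corollary~\ref{cor:deformed-characters}, for the morphism $\Gamma_\tau:(H,\delta)\to(K,\Delta_K)$, where $A_\sigma=H^\imath_\tau$ by Proposition~\ref{prop:ihopf-deformation}.

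\emph{Step 1: evaluate $\sigma^{-1}$ on the graph.} Using \eqref{eq:sigma-inverse} with $u=\tau(a_2)$, $a\mapsto a_1$, $v=\tau(b_2)$, $b\mapsto b_1$, we get
\[
\sigma^{-1}(\Gamma_\tau(a),\Gamma_\tau(b))
=\eps(\tau(a_2))\,\phi(\tau(b_2),a_1)\,\eps(b_1)
=\phi(\tau(b),a).
\]
This uses $\eps\circ\tau=\eps$ and the counit axiom.

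\emph{Step 2: reduce to a condition on $\sigma^{-1}$.} By Proposition~\ref{prop:Gamma-hopf-target}, $\Gamma_\tau:H^\imath_\tau\to K^\sigma=D(H)$ is an algebra map if and only if $\sigma|_{B\otimes B}=\eps_K|_B\otimes\eps_K|_B$. The condition on $\sigma$ is equivalent to the same condition on $\sigma^{-1}$ restricted to $B\otimes B$. The proof of Proposition~\ref{prop:Gamma-hopf-target} already shows one direction: if the restriction of $\sigma^{-1}$ is trivial, then so is that of $\sigma$. The converse is symmetric, because $B$ is a right coideal and $\sigma*\sigma^{-1}=\eps\otimes\eps$. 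Concretely, if $\sigma|_{B\otimes B}$ is trivial, then for $x,y\in B$
\[
\eps(x)\eps(y)=\sigma(x_1,y_1)\sigma^{-1}(x_2,y_2)=\sigma^{-1}(x,y),
\]
since $x_1,y_1\in B$.

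\emph{Step 3: translate back to $H$.} Since $\eps_K(\Gamma_\tau(a))=\eps(\tau(a_2))\eps(a_1)=\eps(a)$, Step 1 shows that triviality of $\sigma^{-1}$ on $B\otimes B$ reads exactly $\phi(\tau(b),a)=\eps(a)\eps(b)$ for all $a,b\in H$. Here we use that every element of $B$ has the form $\Gamma_\tau(a)$.

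There is no genuine obstacle. The only point requiring care is the passage between $\sigma$ and $\sigma^{-1}$ in Step 2, which uses the right coideal property of $B$ exactly as in the proof of Proposition~\ref{prop:Gamma-hopf-target}.
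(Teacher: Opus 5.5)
Your proposal is correct and follows the paper's own proof: substitute $\Gamma_\tau$ into \eqref{eq:sigma-inverse}, then apply Proposition~\ref{prop:Gamma-hopf-target} with the criterion expressed through $\sigma^{-1}$. You also write out the equivalence of triviality of $\sigma$ and of $\sigma^{-1}$ on $B\otimes B$, using the right coideal property of $B$, which the paper leaves implicit.
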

	
	\begin{proof}
		Substitute $\Gamma_\tau$ in \eqref{eq:sigma-inverse} and use
		the counit identities. The criterion is
		Proposition~\ref{prop:Gamma-hopf-target}, expressed using
		$\sigma^{-1}$.
	\end{proof}

	\begin{corollary}
		\label{cor:diagonal-realization}
		Transport the right $K^\sigma$-coaction of $F_\sigma(K)$
		along
		\[
		K^\sigma\cong D(H)
		\xrightarrow{\;\Phi^{\mathrm D}\;}
		(H\otimes H)^\imath.
		\]
		Then
		\[
		\Gamma_\tau:
		(H^\imath_\tau,\Psi)
		\longrightarrow
		F_\sigma(K)
		\]
		is an injective morphism in
		$\Alg(\Comod((H\otimes H)^\imath))$.
	\end{corollary}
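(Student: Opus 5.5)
The approach is to transport everything along the Hopf algebra isomorphism
\[
\theta=\Phi^{\mathrm D}:K^\sigma=D(H)\xrightarrow{\sim}(H\otimes H)^\imath,
\]
and then invoke Theorem~\ref{thm:main} together with Lemma~\ref{lem:coaction-comparison}. Any Hopf algebra isomorphism $\theta:L\to L'$ induces a monoidal isomorphism of comodule categories: a right $L$-comodule $(V,\rho_V)$ becomes the right $L'$-comodule $(V,(\id\otimes\theta)\rho_V)$, and linear maps are unchanged. Since the tensor product of comodules only uses the multiplication of the Hopf algebra, and $\theta$ is multiplicative, this restriction of scalars commutes with tensor products on the nose. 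Hence it sends algebra objects to algebra objects with the same multiplication, and colinear algebra maps to colinear algebra maps. The comultiplicativity of $\theta$ is what ensures that the transported coaction is again coassociative and counital.

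Concretely, I would carry out three steps. First, by Theorem~\ref{thm:main}, $\Gamma_\tau:(H^\imath_\tau,\delta)\to F_\sigma(K)$ is an injective morphism in $\Alg(\Comod(K^\sigma))$. That is, it is an injective unital algebra map satisfying
\[
\Delta_K\Gamma_\tau=(\Gamma_\tau\otimes\id_K)\delta,
\]
where $\Delta_K$ is the right $K^\sigma$-coaction of $F_\sigma(K)$ from Proposition~\ref{prop:bigalois}. Second, apply $\id\otimes\theta$ to this identity to obtain
\[
(\id\otimes\theta)\Delta_K\Gamma_\tau=(\Gamma_\tau\otimes\id)(\id\otimes\theta)\delta.
\]
The left-hand side is the transported coaction on $F_\sigma(K)$. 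The right-hand side involves $(\id\otimes\theta)\delta$, which equals $\Psi$ by \eqref{eq:main-coaction} of Lemma~\ref{lem:coaction-comparison}. Third, note that the transported objects $(H^\imath_\tau,\Psi)$ and $F_\sigma(K)$ are algebras in $\Comod((H\otimes H)^\imath)$. The algebra structures themselves are unchanged: multiplicativity of the coaction is preserved because $\theta$ is an algebra map, and coassociativity is preserved because $\theta$ is a coalgebra map. Injectivity and multiplicativity of $\Gamma_\tau$ do not depend on the coaction at all.

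None of these steps presents a real obstacle. The only point requiring care is the identification $K^\sigma=D(H)$ from Proposition~\ref{prop:cocycle-double}. It is the identity on underlying spaces, so the composite $K^\sigma\to(H\otimes H)^\imath$ is literally $\Phi^{\mathrm D}$ applied to $H\otimes H$. This justifies using Lemma~\ref{lem:coaction-comparison} exactly as stated, since that lemma is also phrased with $\Phi^{\mathrm D}$ acting on the $K$-component of $\delta$.
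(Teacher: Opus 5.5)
Your proposal is correct and follows the paper's own argument, which derives the corollary directly from Theorem~\ref{thm:main} and Lemma~\ref{lem:coaction-comparison}. You spell out what the paper leaves implicit: restriction of scalars along the Hopf algebra isomorphism $\Phi^{\mathrm D}$ preserves comodule algebras and colinear algebra maps, and applying $\id\otimes\Phi^{\mathrm D}$ to the colinearity identity of $\Gamma_\tau$ turns $\delta$ into $\Psi$ via \eqref{eq:main-coaction}.
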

	
	\begin{proof}
		This follows immediately from
		Theorem~\ref{thm:main} and
		Lemma~\ref{lem:coaction-comparison}.
	\end{proof}
	
	\begin{remark}
		The choice $K=H\otimes H^{\cop}$
		is adapted to the right-comodule convention used here. If the same
		linear map
		\[
		\Gamma_\tau(h)=\tau(h_2)\otimes h_1
		\]
		is instead regarded as taking values in $H^{\cop}\otimes H$,
		its image is a left coideal subalgebra.
	\end{remark}

	\begin{proposition}\label{prop:Gamma-MU-stability}
		The subalgebra $B_\sigma=\Gamma_\tau(H^\imath_\tau)\subseteq F_\sigma(K)$
		is stable under the right Miyashita--Ulbrich action of $K^\sigma$
		if and only if $\tau(H)\subseteq Z(H)$.
		If $\tau$ is injective, this is equivalent to commutativity of $H$.
	\end{proposition}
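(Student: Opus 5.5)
By Lemma~\ref{lem:MU-stability}, applied to the right coideal subalgebra $B=\Gamma_\tau(H)\subseteq K$, stability of $B_\sigma$ under the Miyashita--Ulbrich action of $K^\sigma$ is equivalent to the undeformed condition
\[
S_K(x_1)\,\Gamma_\tau(h)\,x_2\in B\qquad h\in H,\ x\in K .
\]
The plan is therefore to work entirely in $K=H\otimes H^{\cop}$, with $S_K=S\otimes S^{-1}$. Since $K$ is generated as an algebra by $H\otimes1$ and $1\otimes H$, and the adjoint action is an action, the condition only needs to be checked for $x=u\otimes1$ and $x=1\otimes a$. The starting point is
\[
S_K(x_1)\,\Gamma_\tau(h)\,x_2=\bigl(S(u_1)\tau(h_2)u_2\bigr)\otimes h_1
\]
for $x=u\otimes1$. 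For $x=1\otimes a$, the coproduct of $H^{\cop}$ reverses the order, and one obtains
\[
\tau(h_2)\otimes S^{-1}(a_2)h_1a_1 .
\]

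To decide membership in $B$, I will use the projection characterization. An element $y\in K$ lies in $B$ if and only if
\[
y=\Gamma_\tau\bigl((\eps\otimes\id)y\bigr).
\]
This holds because $(\eps\otimes\id)\Gamma_\tau=\id_H$ and $\Gamma_\tau$ is injective. The argument then runs in two directions.

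\textbf{Sufficiency.} Assume $\tau(H)\subseteq Z(H)$.
\begin{itemize}
\item For $x=u\otimes 1$: centrality gives $S(u_1)\tau(h_2)u_2=\eps(u)\tau(h_2)$, so the element equals $\eps(u)\Gamma_\tau(h)\in B$.
\item For $x=1\otimes a$: I will show that $y=\tau(h_2)\otimes S^{-1}(a_2)h_1a_1$ equals $\Gamma_\tau\bigl(S^{-1}(a_2)ha_1\bigr)$. Set $g=S^{-1}(a_2)ha_1$. Expanding $\Gamma_\tau(g)=\tau(g_2)\otimes g_1$ with $\Delta(S^{-1}(a_2))=S^{-1}(a_3)\otimes S^{-1}(a_2)$ produces factors $\tau(S^{-1}(a_i))\,\tau(h_2)\,\tau(a_j)$. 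By centrality these combine into $\tau(h_2)\,\tau\bigl(S^{-1}(a_i)a_j\bigr)$ for adjacent Sweedler indices, which collapses through the identity $S^{-1}(a_2)a_1=\eps(a)$, applied in the appropriate slots. The result is $y$, so $y\in B$.
\end{itemize}

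\textbf{Necessity.} Assume stability and take $x=u\otimes1$. Then
\[
y=S(u_1)\tau(h_2)u_2\otimes h_1\in B .
\]
Projecting gives $(\eps\otimes\id)y=\eps(u)h$, so $y=\eps(u)\Gamma_\tau(h)$. Applying $\id\otimes\eps$ yields
\[
S(u_1)\,\tau(h)\,u_2=\eps(u)\,\tau(h)\qquad u,h\in H,
\]
i.e.\ $\tau(h)$ is invariant under the right adjoint action. The standard argument then gives centrality:
\[
\tau(h)\,u=u_1S(u_2)\,\tau(h)\,u_3=u_1\,\eps(u_2)\,\tau(h)=u\,\tau(h).
\]
Hence $\tau(H)\subseteq Z(H)$.

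For the final sentence of the statement: if $\tau$ is injective, centrality of $\tau(H)$ does not by itself give $H$ commutative unless $\tau(H)=H$. I would check how the authors intend this, perhaps by reading it as bijective or by using some further structure. As written, the claim seems to require $\tau$ surjective: when $\tau$ is an automorphism, $\tau(H)=H\subseteq Z(H)$ gives commutativity, and conversely if $H$ is commutative every $\tau$ works. I would present the argument under $\tau(H)=H$ and flag this point.

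The main obstacle is the Sweedler bookkeeping in the $x=1\otimes a$ case. There the $H^{\cop}$ coproduct and $S^{-1}$ reverse orderings, and the relevant indices of $a$ must be paired correctly so that centrality applies to produce the cancellation $S^{-1}(a_2)a_1$.
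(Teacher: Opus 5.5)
Your proof of the equivalence ($B_\sigma$ is stable if and only if $\tau(H)\subseteq Z(H)$) is correct and follows the paper's argument essentially step by step. Both use the reduction via Lemma~\ref{lem:MU-stability} and check only the generators $u\otimes1$ and $1\otimes a$. Both use the same collapse $\tau(S^{-1}(a_3))\tau(h_2)\tau(a_2)=\tau(h_2)\tau\bigl(S^{-1}(a_3)a_2\bigr)$ to identify the $1\otimes a$ term with $\Gamma_\tau\bigl(S^{-1}(a_2)ha_1\bigr)$. Necessity is the same $a=1$ argument, projecting with $(\eps\otimes\id)\Gamma_\tau=\id_H$.

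The gap is in the final sentence. You claim that injectivity of $\tau$ is not enough and propose to prove the statement only when $\tau(H)=H$. The statement is true as written, and restricting to surjective $\tau$ would prove strictly less than claimed.

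The missing observation is that central elements commute with each other, so $\tau(H)$ is itself a commutative subalgebra. If $\tau(H)\subseteq Z(H)$, then for all $a,b\in H$,
\[
\tau(ab-ba)=\tau(a)\tau(b)-\tau(b)\tau(a)=0 ,
\]
and injectivity of $\tau$ gives $ab=ba$. In other words, $\tau$ is an algebra isomorphism of $H$ onto the commutative algebra $\tau(H)$. Nothing requires $\tau(H)$ to exhaust $H$. The converse direction, that commutativity of $H$ gives $Z(H)=H\supseteq\tau(H)$, you already have. With this one line added, your proposal proves the full proposition.
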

	
	\begin{proof}
		By Lemma~\ref{lem:MU-stability}, it is enough to determine when
		$B=\Gamma_\tau(H)$ is stable under the right adjoint action of $K$.
		For $u,a,h\in H$,
		\[
		\Gamma_\tau(h)\leftharpoonup(u\otimes a)
		=
		S(u_1)\tau(h_2)u_2
		\otimes
		S^{-1}(a_2)h_1a_1.
		\]
		
		Suppose that $B$ is stable. Since
		$(\eps_H\otimes\id_H)\Gamma_\tau=\id_H$, taking $a=1$ gives
		\[
		S(u_1)\tau(h)u_2
		=
		\eps_H(u)\tau(h).
		\]
		Hence
		\[
		\tau(h)u
		=
		u_1\bigl(S(u_2)\tau(h)u_3\bigr)
		=
		u\tau(h),
		\]
		so $\tau(H)\subseteq Z(H)$.
		
		Conversely, suppose that $\tau(H)\subseteq Z(H)$. Then
		\[
		\Gamma_\tau(h)\leftharpoonup(u\otimes1)
		=
		\eps_H(u)\Gamma_\tau(h),
		\]
		and
		\[
		\begin{aligned}
			\Gamma_\tau(S^{-1}(a_2)ha_1)
			&=
			\tau(S^{-1}(a_3))\tau(h_2)\tau(a_2)
			\otimes S^{-1}(a_4)h_1a_1\\
			&=
			\tau(h_2)\otimes S^{-1}(a_2)h_1a_1.
		\end{aligned}
		\]
		The second equality uses centrality of $\tau(H)$ and the
		antipode identity in $H^{\cop}$. Thus both $H\otimes1$ and
		$1\otimes H^{\cop}$ preserve $B$. These subalgebras generate
		$K$, so $B$ is stable under the right adjoint action of $K$.
		
		If $\tau$ is injective and $\tau(H)\subseteq Z(H)$, then
		$\tau(ab-ba)=0$ for all $a,b\in H$, so $H$ is commutative.
		Conversely, commutativity of $H$ implies $\tau(H)\subseteq Z(H)$.
	\end{proof}

	\section{Compatible maps and algebra characters}
	\label{sec:characters}
	
	We relate the $\tau$-twisted compatible maps of
	Chen--Lu--Pan--Ruan--Wang to algebra characters
	of $H^\imath_\tau$ and their character-evaluation maps.
	
	By Proposition~\ref{prop:ihopf-deformation} and
	Lemma~\ref{lem:coaction-comparison},
	$H^\imath_\tau$ is an algebra object in
	$\Comod((H\otimes H)^\imath)$.
	Thus, for $t\in\Alg(H^\imath_\tau,k)$,
	Proposition~\ref{pro:character} gives a morphism
	\begin{equation}\label{eq:jt-diagonal}
		j_t=(t\otimes\id)\Psi:
		H^\imath_\tau\longrightarrow(H\otimes H)^\imath
	\end{equation}
	in $\Alg(\Comod((H\otimes H)^\imath))$.
	By \eqref{eq:delta},
	\begin{equation}\label{eq:jt}
		(\Phi^{\mathrm D})^{-1}\circ j_t(h)
		=
		t(h_2)\tau(h_3)\otimes h_1.
	\end{equation}
	
	Recall that a linear map $\chi:H\to k$ is
	\emph{$\tau$-twisted compatible} if
	\begin{equation}\label{eq:chi}
		\chi(ab)
		=
		\chi(a_1)\chi(b_2)
		\phi(\tau(a_2),b_1)
	\end{equation}
	for all $a,b\in H$
	\cite[Definition~2.9]{CLPRW}.
	For $\tau=\id$, it is called a compatible map.
	
	By \cite[Propositions~2.10 and~2.11]{CLPRW},
	a $\tau$-twisted compatible map $\chi$ determines
	a multiplicative linear map
	\begin{equation}\label{eq:xi}
		\xi_{\tau,\chi}:
		H^\imath_\tau\longrightarrow(H\otimes H)^\imath,
		\qquad
		\xi_{\tau,\chi}(h)
		=
		\chi(h_2)\tau(h_3)\otimes h_1,
	\end{equation}
	satisfying
	\begin{equation}\label{eq:xi-colinear}
		\Delta^\imath\circ\xi_{\tau,\chi}
		=
		(\xi_{\tau,\chi}\otimes\id)\circ\Psi.
	\end{equation}
	
	If $\chi\neq0$, then $\chi(1)=1$ by
	\eqref{eq:chi} with $b=1$.
	Hence $\xi_{\tau,\chi}$ is unital and
	\[
	\xi_{\tau,\chi}
	\in
	\Hom_{\Alg(\Comod((H\otimes H)^\imath))}
	\bigl(H^\imath_\tau,(H\otimes H)^\imath\bigr).
	\]
	By Proposition~\ref{pro:character}, its
	corresponding character is
	\begin{equation}\label{eq:Tchi}
		t_\chi=\eps^\imath\circ\xi_{\tau,\chi},
	\end{equation}
	and
	\begin{equation}\label{eq:xi-character-evaluation}
		\xi_{\tau,\chi}=(t_\chi\otimes\id)\Psi.
	\end{equation}
	Since
	$\eps^\imath(u\otimes a)=\phi(u,S^{-1}(a))$,
	we obtain
	\begin{equation}\label{eq:tchi}
		t_\chi(h)
		=
		\chi(h_2)
		\phi(\tau(h_3),S^{-1}(h_1)).
	\end{equation}
	
	We now prove that every character of $H^\imath_\tau$ arises in
	this way. For the next two lemmas, put
	$\psi(x,y)=\phi(\tau(x),y)$. By
	\cite[Lemma~2.5]{CLPRW}, $\psi$ is a Hopf pairing.
	
	\begin{lemma}\label{lem:TQ}
		Define $T,Q:H^*\to H^*$ by
		\[
		T(f)(h)=f(h_2)\psi(h_3,S^{-1}(h_1)),
		\qquad
		Q(f)(h)=f(h_2)\psi(h_3,h_1).
		\]
		Then $T$ and $Q$ are inverse linear automorphisms of $H^*$.
	\end{lemma}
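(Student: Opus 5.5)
The plan is to verify directly that $Q\circ T=\id_{H^*}$ and $T\circ Q=\id_{H^*}$. Both maps are visibly linear, so these two identities give the claim. The only inputs are:
\begin{itemize}
\item the Hopf pairing identity $\psi(x,yy')=\psi(x_1,y)\psi(x_2,y')$ for $\psi=\phi\circ(\tau\otimes\id)$, which is a Hopf pairing by \cite[Lemma~2.5]{CLPRW};
\item the antipode identities of $H^{\cop}$, namely $S^{-1}(h_2)h_1=\eps(h)1=h_2S^{-1}(h_1)$;
\item coassociativity, used freely to renumber Sweedler indices.
\end{itemize}

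First I compute $Q(T(f))$. By definition,
\[
Q(T(f))(h)=T(f)(h_2)\psi(h_3,h_1)
=f(h_3)\psi(h_4,S^{-1}(h_2))\psi(h_5,h_1).
\]
The two $\psi$ factors have first arguments $h_4,h_5$, which are consecutive pieces of a coproduct. So the product rule in the second slot combines them:
\[
\psi(h_4,S^{-1}(h_2))\psi(h_5,h_1)=\psi\bigl(h_4,S^{-1}(h_2)h_1\bigr).
\]
Since $S^{-1}(h_2)h_1=\eps(h_1)1$ (after renumbering the outer legs), the counit property of $\psi$ and the counit identities collapse the expression to $f(h)$.

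Next I compute $T(Q(f))$ in the same way:
\[
T(Q(f))(h)=f(h_3)\psi(h_4,h_2)\psi(h_5,S^{-1}(h_1))=f(h_3)\psi\bigl(h_4,h_2S^{-1}(h_1)\bigr).
\]
Using $h_2S^{-1}(h_1)=\eps(h)1$, this again gives $f(h)$.

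The main point needing care is the bookkeeping of Sweedler indices when merging the two pairing factors. Because the antipode relation involves the legs in reversed order, it must be checked that the right pairing axiom applies, with the first-slot coproduct legs in increasing order matching the factors $y,y'$ in the product. Once this is confirmed, both compositions reduce to the antipode axioms of $H^{\cop}$, and the two verifications are mirror images of each other.
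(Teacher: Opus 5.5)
Your proof is correct and follows essentially the same route as the paper: you expand both composites $T(Q(f))$ and $Q(T(f))$ into five-leg Sweedler expressions, merge the two pairing factors via $\psi(x,yy')=\psi(x_1,y)\psi(x_2,y')$, and collapse the result with the $H^{\cop}$ antipode identities $h_2S^{-1}(h_1)=\eps(h)1=S^{-1}(h_2)h_1$. The only difference is the order in which the two composites are checked.
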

	
	\begin{proof}
		Using the Hopf-pairing identities,
		\begin{align*}
			T(Q(f))(h)
			&=
			f(h_3)\psi(h_4,h_2)\psi(h_5,S^{-1}(h_1))\\
			&=
			f(h_3)\psi(h_4,h_2 S^{-1}(h_1))
			=
			f(h),
		\end{align*}
		where
		$h_2 S^{-1}(h_1)\otimes h_3=1\otimes h$.
		Similarly,
		\begin{align*}
			Q(T(f))(h)
			&=
			f(h_3)\psi(h_4,S^{-1}(h_2))\psi(h_5,h_1)\\
			&=
			f(h_3)\psi(h_4,S^{-1}(h_2) h_1)
			=
			f(h),
		\end{align*}
		using
		$S^{-1}(h_2) h_1\otimes h_3=1\otimes h$.
	\end{proof}
	
	\begin{lemma}\label{lem:character-compatible}
		Let $t\in\Alg(H^\imath_\tau,k)$. Then the linear map $\chi_t:H\to k$
		defined by
		\begin{equation}\label{eq:chi-from-character}
			\chi_t(h)
			=
			Q(t)(h)
			=
			t(h_2)\phi(\tau(h_3),h_1)
		\end{equation}
		is a nonzero $\tau$-twisted compatible map.
	\end{lemma}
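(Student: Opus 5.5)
The plan is a direct computation: rewrite the ordinary product $ab$ of $H$ in terms of the deformed product $*_\tau$, so that multiplicativity of $t$ on $H^\imath_\tau$ can be applied. Throughout I use $\psi(x,y)=\phi(\tau(x),y)$, which is a Hopf pairing by \cite[Lemma~2.5]{CLPRW}. In this notation $a*_\tau b=\psi(b_2,a_1)a_2b_1$ and $\chi_t=Q(t)$ as in Lemma~\ref{lem:TQ}.

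\emph{Nonvanishing.} Since $t$ is unital and $\psi(1,1)=1$, we get $\chi_t(1)=t(1)\psi(1,1)=1$. Hence $\chi_t\neq0$.

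\emph{Inverting the deformed product.} Define $\bar\psi(x,y)=\psi(S^{-1}(x),y)$. I claim
\[
ab=\bar\psi(b_2,a_1)\,a_2*_\tau b_1,\qquad a,b\in H.
\]
Expanding the right side gives $\bar\psi(b_3,a_1)\psi(b_2,a_2)a_3b_1$. The scalar factor collapses by the rule $\psi(xx',y)=\psi(x,y_1)\psi(x',y_2)$:
\[
\psi(S^{-1}(b_3),a_1)\psi(b_2,a_2)=\psi\bigl(S^{-1}(b_3)b_2,a_1\bigr)=\eps(\cdot)\eps(\cdot),
\]
where the last step uses the antipode identity for $H^{\cop}$. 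This is the same mechanism as in Lemma~\ref{lem:TQ} and Proposition~\ref{prop:transport}. Abstractly, this is just the statement that $\sigma^{-1}$ deforms $H^\imath_\tau$ back to $H$.

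\emph{Main computation.} Since $\Delta(ab)=a_ib_i$ componentwise,
\[
\chi_t(ab)=t(a_2b_2)\,\psi(a_3b_3,a_1b_1).
\]
First substitute the inversion formula for $a_2b_2$ and use $t(x*_\tau y)=t(x)t(y)$. This gives $t(a_2)t(b_2)$ multiplied by a product of $\psi$ and $\bar\psi$ factors in the remaining Sweedler legs.

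Next expand $\psi(a_3b_3,a_1b_1)$ using both multiplicativity rules in \eqref{eq:pairing}. It becomes a product of four factors
\[
\psi(a,a)\,\psi(a,b)\,\psi(b,a)\,\psi(b,b)
\]
in suitable Sweedler legs. The target expression is
\[
\chi_t(a_1)\chi_t(b_2)\psi(a_2,b_1)
= t(a_2)\psi(a_3,a_1)\;t(b_3)\psi(b_4,b_2)\;\psi(a_4,b_1).
\]
So after matching the $\psi(a,a)$ and $\psi(b,b)$ factors, I must show that the leftover mixed terms collapse to the single $\psi(a,b)$ factor. These leftovers are the $\bar\psi(b,a)$ factor from the inversion, the cross terms $\psi(a,b)\psi(b,a)$, and the relabelling of legs. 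The collapse should come from one application of $\bar\psi*\psi=\eps\otimes\eps$: the $\bar\psi(b,a)$ factor cancels the $\psi(b,a)$ cross term, because the relevant legs become adjacent after combining with coassociativity.

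\emph{Main obstacle.} The hard part is the Sweedler bookkeeping in this last step. One must check that the legs line up so that $S^{-1}(b_j)b_{j-1}$ or $a$-legs combine correctly. If the direct matching fails, I would instead compare both sides after applying $T$ from Lemma~\ref{lem:TQ}, that is, work with $t=T(\chi_t)$. The identity then becomes the multiplicativity of $t_\chi$ in \eqref{eq:tchi}, which can be checked symmetrically. Since $T$ and $Q$ are mutually inverse, either verification suffices.
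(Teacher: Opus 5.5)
Your route is correct, but it runs in the opposite direction from the paper's. The paper never inverts the deformed product. It applies the character relation $t(a)t(b)=\psi(b_2,a_1)t(a_2b_1)$ to the target side, turning $t(a_2)t(b_3)$ into $\psi(b_4,a_2)t(a_3b_3)$. After that, both $\chi_t(ab)$ and $\chi_t(a_1)\chi_t(b_2)\psi(a_2,b_1)$ reduce to $t(a_3b_3)\psi(a_4,a_1b_1)\psi(b_4,a_2b_2)$ using only the two multiplicativity rules of $\psi$, with no antipode at all.

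You instead split $t(a_2b_2)$ on the left-hand side via your inversion formula. That formula is correct: it is the undeformation by $\sigma$. The cost is $S^{-1}$ and one extra cancellation, and that cancellation does occur as you predict. Use $t(xy)=\bar\psi(y_2,x_1)t(x_2)t(y_1)$, then expand the factor $\psi(a_3b_3,a_1b_1)$ by both multiplicativity rules and relabel. This gives
\[
\chi_t(ab)=\bar\psi(b_4,a_3)\,t(a_4)\,t(b_3)\,\psi(a_5,a_1)\,\psi(a_6,b_1)\,\psi(b_5,a_2)\,\psi(b_6,b_2).
\]
Apply $\psi(xx',y)=\psi(x,y_1)\psi(x',y_2)$ on the adjacent legs $a_2,a_3$. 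The product $\psi(b_5,a_2)\psi(S^{-1}(b_4),a_3)$ becomes a single pairing with first argument $b_5S^{-1}(b_4)$. This collapses by $x_2S^{-1}(x_1)=\eps(x)$ on the adjacent legs $b_4,b_5$. What remains is $t(a_2)t(b_3)\psi(a_3,a_1)\psi(a_4,b_1)\psi(b_4,b_2)$, which is exactly your target.

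The identity actually used is the convolution identity on $H\otimes H^{\cop}$, namely $\bar\psi(x_1,y_2)\psi(x_2,y_1)=\eps(x)\eps(y)$. The literal identity $\bar\psi*\psi=\eps\otimes\eps$ on $H\otimes H$ need not hold, since $S^{-1}(x_1)x_2\neq\eps(x)$ in general.

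Your fallback through $T$ is unnecessary. As phrased, it would verify the converse implication (compatible maps give characters) rather than the lemma.

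The paper's direction is shorter and antipode-free. Yours makes explicit that the lemma is just multiplicativity of $t$ transported back along the undeformation by $\sigma$.
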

	
	\begin{proof}
		Since $t$ is a character of $H^\imath_\tau$,
		\begin{equation}\label{eq:t-character-relation}
			t(a)t(b)
			=
			\psi(b_2,a_1)t(a_2b_1).
		\end{equation}
		The Hopf-pairing identities give
		\begin{align*}
			\chi_t(ab)
			&=
			t(a_2b_2)\psi(a_3b_3,a_1b_1)\\
			&=
			t(a_3b_3)
			\psi(a_4,a_1b_1)
			\psi(b_4,a_2b_2).
		\end{align*}
		On the other hand, using
		\eqref{eq:t-character-relation},
		\begin{align*}
			\chi_t(a_1)\chi_t(b_2)\psi(a_2,b_1)
			&=
			t(a_2)t(b_3)
			\psi(a_3,a_1)
			\psi(b_4,b_2)
			\psi(a_4,b_1)\\
			&=
			t(a_3b_3)
			\psi(b_4,a_2)
			\psi(a_4,a_1)
			\psi(b_5,b_2)
			\psi(a_5,b_1)\\
			&=
			t(a_3b_3)
			\psi(a_4,a_1b_1)
			\psi(b_4,a_2b_2).
		\end{align*}
		Hence $\chi_t$ satisfies \eqref{eq:chi}.
		Moreover, $\chi_t(1)=t(1)=1$, so $\chi_t\neq0$.
	\end{proof}
	
	\begin{theorem}[Compatible maps and algebra characters]
		\label{thm:compatible-characters}
		The assignment $\chi\mapsto t_\chi$ is a bijection between the nonzero
		$\tau$-twisted compatible maps $H\to k$ and
		$\Alg(H^\imath_\tau,k)$. Its inverse is
		\[
		t\longmapsto\chi_t,
		\qquad
		\chi_t(h)=t(h_2)\phi(\tau(h_3),h_1).
		\]
		Under this bijection,
		\begin{equation}\label{eq:xi-jt}
			\xi_{\tau,\chi_t}=(t\otimes\id)\Psi.
		\end{equation}
	\end{theorem}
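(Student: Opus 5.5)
The plan is to exploit Lemma~\ref{lem:TQ}: the assignments $\chi\mapsto t_\chi$ and $t\mapsto\chi_t$ are restrictions of the mutually inverse automorphisms $T$ and $Q$ of $H^*$. Indeed, \eqref{eq:tchi} reads $t_\chi=T(\chi)$, since $\psi(h_3,S^{-1}(h_1))=\phi(\tau(h_3),S^{-1}(h_1))$. Likewise \eqref{eq:chi-from-character} defines $\chi_t=Q(t)$. So it suffices to show that $T$ maps nonzero $\tau$-twisted compatible maps into $\Alg(H^\imath_\tau,k)$, and that $Q$ maps $\Alg(H^\imath_\tau,k)$ into nonzero $\tau$-twisted compatible maps. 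Bijectivity then follows from $TQ=QT=\id$.

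For the first inclusion, let $\chi\neq0$ be $\tau$-twisted compatible. As noted after \eqref{eq:xi-colinear}, $\chi(1)=1$, so $\xi_{\tau,\chi}$ is a unital morphism in $\Alg(\Comod((H\otimes H)^\imath))$. This uses \cite[Propositions~2.10, 2.11]{CLPRW} together with Lemma~\ref{lem:coaction-comparison}, which identifies $\Psi$ with the transported coaction $\delta$. Proposition~\ref{pro:character} then gives $t_\chi=\eps^\imath\circ\xi_{\tau,\chi}\in\Alg(H^\imath_\tau,k)$. The second inclusion is exactly Lemma~\ref{lem:character-compatible}. Combining these with Lemma~\ref{lem:TQ}, $\chi\mapsto t_\chi$ is a bijection with inverse $t\mapsto\chi_t$.

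For \eqref{eq:xi-jt}, apply \eqref{eq:xi-character-evaluation} with $\chi=\chi_t$. This gives $\xi_{\tau,\chi_t}=(t_{\chi_t}\otimes\id)\Psi$, and $t_{\chi_t}=T(Q(t))=t$. Alternatively, one can check directly from \eqref{eq:jt} that $(t\otimes\id)\Psi$ corresponds under $\Phi^{\mathrm D}$ to $h\mapsto t(h_2)\tau(h_3)\otimes h_1$. One then compares this with \eqref{eq:xi} after applying $(\Phi^{\mathrm D})^{-1}$; here the twisting factor $\phi(\tau(\cdot),\cdot)$ in $\chi_t$ should cancel against the factor in $\Phi^{\mathrm D}$.

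The only step with real content is the identification $t_\chi=T(\chi)$. This requires expanding $\eps^\imath\circ\xi_{\tau,\chi}$ using \eqref{eq:Jcounit}:
\[
\eps^\imath(\chi(h_2)\tau(h_3)\otimes h_1)=\chi(h_2)\phi(\tau(h_3),S^{-1}(h_1)),
\]
which has already been recorded as \eqref{eq:tchi}. So the main point to watch is consistency of Sweedler indices between \eqref{eq:tchi} and the definition of $T$. Once that is confirmed, the argument is formal.
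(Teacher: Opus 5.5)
Your proposal is correct and follows the paper's proof. Both identify $\chi\mapsto t_\chi$ and $t\mapsto\chi_t$ as restrictions of the mutually inverse maps $T,Q$ of Lemma~\ref{lem:TQ}, use Lemma~\ref{lem:character-compatible} for the $Q$-direction, and deduce \eqref{eq:xi-jt} from \eqref{eq:xi-character-evaluation} via $t_{\chi_t}=T(Q(t))=t$. You also spell out why $T$ sends nonzero compatible maps into $\Alg(H^\imath_\tau,k)$, namely through $\xi_{\tau,\chi}$ and Proposition~\ref{pro:character}, a step the paper leaves to the discussion just before the theorem.
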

	
	\begin{proof}
		By \eqref{eq:tchi}, the assignment
		$\chi\mapsto t_\chi$ is the restriction of $T$, while
		Lemma~\ref{lem:character-compatible} shows that $Q$ sends
		algebra characters to nonzero $\tau$-twisted compatible maps.
		Since $T$ and $Q$ are inverse by Lemma~\ref{lem:TQ}, these
		assignments are inverse.
		
		For $t\in\Alg(H^\imath_\tau,k)$, one has
		$t_{\chi_t}=T(Q(t))=t$. Hence
		\eqref{eq:xi-character-evaluation} gives
		\eqref{eq:xi-jt}.
	\end{proof}
	
	Combining Theorem~\ref{thm:compatible-characters} with
	Proposition~\ref{pro:character} gives a categorical form of the result.
	
	\begin{corollary}\label{cor:compatible-coideal}
		There is a bijection between nonzero
		$\tau$-twisted compatible maps $H\to k$ and
		\[
		\Hom_{\Alg(\Comod((H\otimes H)^\imath))}
		\bigl(H^\imath_\tau,(H\otimes H)^\imath\bigr).
		\]
		Under this bijection, the morphism corresponding to $\chi$ is
		$\xi_{\tau,\chi}$.
	\end{corollary}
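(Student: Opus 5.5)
The corollary is a direct composition of two bijections already in hand. First, Theorem~\ref{thm:compatible-characters} gives a bijection $\chi\mapsto t_\chi$ from nonzero $\tau$-twisted compatible maps $H\to k$ onto $\Alg(H^\imath_\tau,k)$, with inverse $t\mapsto\chi_t$. Second, apply Proposition~\ref{pro:character} with $L=(H\otimes H)^\imath$ and $(C,\rho_C)=(H^\imath_\tau,\Psi)$. This is legitimate because, by Proposition~\ref{prop:ihopf-deformation} and Lemma~\ref{lem:coaction-comparison}, $(H^\imath_\tau,\Psi)$ is a right $(H\otimes H)^\imath$-comodule algebra: it is the transport of $(H,\delta)_\sigma$ along the Hopf isomorphism $K^\sigma\cong D(H)\xrightarrow{\Phi^{\mathrm D}}(H\otimes H)^\imath$. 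The proposition then gives a bijection
\[
\Alg(H^\imath_\tau,k)\xrightarrow{\sim}\Hom_{\Alg(\Comod((H\otimes H)^\imath))}\bigl(H^\imath_\tau,(H\otimes H)^\imath\bigr),\qquad t\mapsto(t\otimes\id)\Psi,
\]
with inverse $j\mapsto\eps^\imath\circ j$.

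Composing the two, a nonzero compatible map $\chi$ goes to $(t_\chi\otimes\id)\Psi$. To see that this composite is the map named in the statement, I would invoke \eqref{eq:xi-character-evaluation}, which identifies $(t_\chi\otimes\id)\Psi$ with $\xi_{\tau,\chi}$. Equivalently, \eqref{eq:xi-jt} with $t=t_\chi$ and $\chi_{t_\chi}=\chi$ gives the same identification.

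The inverse sends a morphism $j$ to $\chi_{\eps^\imath\circ j}=Q(\eps^\imath\circ j)$. I would note this explicitly for completeness.

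There is no real obstacle. The only point to check is that \eqref{eq:xi-character-evaluation} was derived from the facts that $\xi_{\tau,\chi}$ is unital, multiplicative and colinear, \eqref{eq:xi-colinear}, together with the uniqueness part of Proposition~\ref{pro:character}. So the identification holds for every nonzero $\chi$, not only for those of the form $\chi_t$, and the bijection is well defined on the stated domain.
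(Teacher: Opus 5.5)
Your proposal is correct and follows the paper's own argument: the corollary is obtained by composing the bijection $\chi\mapsto t_\chi$ of Theorem~\ref{thm:compatible-characters} with the character-evaluation bijection of Proposition~\ref{pro:character} for $(H^\imath_\tau,\Psi)$, and the composite is identified with $\chi\mapsto\xi_{\tau,\chi}$ via \eqref{eq:xi-character-evaluation}. Your check that this identification holds for every nonzero $\chi$, because it rests only on $\xi_{\tau,\chi}$ being a unital colinear algebra map together with uniqueness in Proposition~\ref{pro:character}, is accurate.
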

	
	Theorem~\ref{thm:xi-injective} below identifies $H^\imath_\tau$
	with the right coideal subalgebra
	$\xi_{\tau,\chi}(H^\imath_\tau)$ for every nonzero
	$\tau$-twisted compatible map $\chi$. This gives the right coideal
	realization discussed in \cite[Remark~2.12]{CLPRW}, where injectivity
	is assumed.

	\begin{theorem}\label{thm:xi-injective}
		Let $\chi:H\to k$ be a nonzero
		$\tau$-twisted compatible map. Then
		\[
		\xi_{\tau,\chi}:
		H^\imath_\tau\longrightarrow (H\otimes H)^\imath
		\]
		is an injective morphism in
		$\Alg(\Comod((H\otimes H)^\imath))$. Consequently,
		$\xi_{\tau,\chi}(H^\imath_\tau)$ is a right coideal subalgebra of
		$(H\otimes H)^\imath$.
	\end{theorem}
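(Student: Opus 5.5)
The plan is to reduce the statement to the character-evaluation machinery of Section~\ref{sec:coideal-deformation}, transported along $\Phi^{\mathrm D}$. By Theorem~\ref{thm:compatible-characters}, the nonzero $\tau$-twisted compatible map $\chi$ corresponds to the character $t=t_\chi\in\Alg(H^\imath_\tau,k)$. By \eqref{eq:xi-character-evaluation} we have $\xi_{\tau,\chi}=(t_\chi\otimes\id)\Psi$. Moreover, $\chi\neq0$ forces $\chi(1)=1$. Hence $\xi_{\tau,\chi}$ is a morphism in $\Alg(\Comod((H\otimes H)^\imath))$ by Proposition~\ref{pro:character}, which applies once $H^\imath_\tau$ is viewed as a right $(H\otimes H)^\imath$-comodule algebra via Proposition~\ref{prop:ihopf-deformation} and Lemma~\ref{lem:coaction-comparison}. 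It remains to prove injectivity and the coideal assertion.

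For injectivity, I would compose with the Hopf algebra isomorphism $(\Phi^{\mathrm D})^{-1}$. By Lemma~\ref{lem:coaction-comparison},
\[
(\Phi^{\mathrm D})^{-1}\circ\xi_{\tau,\chi}=(t_\chi\otimes\id_K)\delta,
\]
which is exactly the character-evaluation map $j_{t_\chi}$ of the right $K^\sigma$-comodule algebra $(H,\delta)_\sigma=H^\imath_\tau$, with $K^\sigma=D(H)$. By Lemma~\ref{lem:graph-stable-ideals}, the undeformed comodule algebra $(H,\delta)$ has no nonzero proper two-sided ideal that is a right $K$-subcomodule. Proposition~\ref{prop:stable-ideals} then says that every character-evaluation map of $(H,\delta)_\sigma$ is injective, so $j_{t_\chi}$ is injective. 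Since $\Phi^{\mathrm D}$ is bijective, $\xi_{\tau,\chi}=\Phi^{\mathrm D}\circ j_{t_\chi}$ is injective as well.

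Alternatively, one could apply Lemma~\ref{lem:quotient-character-injectivity} with $Q=H^{\cop}$ and the projection $\pi(u\otimes a)=\eps(u)a$ used in the proof of Lemma~\ref{lem:graph-stable-ideals}. Either route gives the same conclusion. A direct sanity check is also available: applying $\eps\otimes\id$ to \eqref{eq:xi} gives $h\mapsto\chi(h_2)h_1$. This is the winding map of $\chi$, which is bijective whenever $\chi$ is convolution-invertible. That would give a left inverse, but proving convolution invertibility of $\chi$ directly is more work, so I would use the ideal argument above.

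Finally, Proposition~\ref{prop:character-kernel}, applied with $L=(H\otimes H)^\imath$, shows that the image of the character-evaluation map $\xi_{\tau,\chi}$ is a right coideal subalgebra of $(H\otimes H)^\imath$. The only step requiring care is the identification $(\Phi^{\mathrm D})^{-1}\xi_{\tau,\chi}=j_{t_\chi}$, where the coaction conventions must be checked. This identification is exactly \eqref{eq:jt}, compared with \eqref{eq:xi} under $t=t_\chi$ via \eqref{eq:xi-jt}, so I expect no real obstacle there.
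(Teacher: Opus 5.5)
Your proposal is correct and follows the paper's proof. You identify $\xi_{\tau,\chi}=(t_\chi\otimes\id)\Psi$ via Theorem~\ref{thm:compatible-characters}, transport it along $\Phi^{\mathrm D}$ using Lemma~\ref{lem:coaction-comparison} to a character-evaluation map of $(H,\delta)_\sigma$, deduce injectivity from Lemma~\ref{lem:graph-stable-ideals} and Proposition~\ref{prop:stable-ideals}, and get the morphism and coideal assertions from Proposition~\ref{pro:character}; your alternative via Lemma~\ref{lem:quotient-character-injectivity} and your left-inverse remark, which the paper carries out in Proposition~\ref{prop:compatible-convolution-inverse}, are also valid.
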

	
	\begin{proof}
		By Theorem~\ref{thm:compatible-characters},
		$\xi_{\tau,\chi}=(t_\chi\otimes\id)\Psi$.
		Proposition~\ref{prop:stable-ideals},
		Lemma~\ref{lem:graph-stable-ideals}, and the coaction comparison
		in Lemma~\ref{lem:coaction-comparison} show that this map is
		injective. Proposition~\ref{pro:character} gives the
		right-colinear algebra-map and right coideal assertions.
	\end{proof}
	
	\begin{proposition}\label{prop:compatible-convolution-inverse}
		Let $\chi:H\to k$ be a nonzero $\tau$-twisted compatible map.
		Using the Hopf pairing $\psi(x,y)=\phi(\tau(x),y)$ as above,
		$\chi$ is convolution-invertible, with inverse
		\[
		\bar\chi(h)
		=
		\chi(S(h_2))\psi(h_1,S(h_3))
		=
		\chi(S(h_2))\phi(\tau(h_1),S(h_3)).
		\]
		Moreover, if
		\[
		W_f(h)=h_1f(h_2),
		\qquad f\in H^*,
		\]
		then $W_\chi$ is a linear automorphism of $H$ with inverse
		$W_{\bar\chi}$, and
		\[
		W_{\bar\chi}\circ(\eps\otimes\id)\circ\xi_{\tau,\chi}
		=\id_H.
		\]
		In particular,
		$W_{\bar\chi}\circ(\eps\otimes\id)$ is a linear left inverse of
		$\xi_{\tau,\chi}$.
	\end{proposition}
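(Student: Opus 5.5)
Three things must be shown: $\bar\chi$ is a two-sided convolution inverse of $\chi$; $W_\chi$ and $W_{\bar\chi}$ are mutually inverse; and the left-inverse identity holds. The second follows from the first. Since $W_f$ is right multiplication by $f$ on the right $H^*$-module given by $h\mapsto h_1f(h_2)$, coassociativity gives
\[
W_fW_g=W_{g*f}.
\]
Once $\chi*\bar\chi=\bar\chi*\chi=\eps$, we get $W_\chi W_{\bar\chi}=W_{\bar\chi}W_\chi=W_\eps=\id_H$.

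For the left inverse, compute directly from \eqref{eq:xi}:
\[
(\eps\otimes\id)\xi_{\tau,\chi}(h)=\chi(h_2)\,\eps(\tau(h_3))\,h_1=h_1\chi(h_2)=W_\chi(h).
\]
Hence $W_{\bar\chi}\circ(\eps\otimes\id)\circ\xi_{\tau,\chi}=W_{\bar\chi}W_\chi=\id_H$. So everything reduces to the convolution identities.

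For those, I would avoid brute force. Theorem~\ref{thm:coideal-character-injectivity} does not directly give the explicit formula, so I would verify it from \eqref{eq:chi}. Compute
\[
\chi(h_1)\bar\chi(h_2)=\chi(h_1)\chi(S(h_3))\psi(h_2,S(h_4)).
\]
Apply \eqref{eq:chi} to $a=h_1$, $b=S(h_2)$:
\[
\chi(h_1S(h_2))=\eps(h)=\chi(h_1)\chi(S(h_2)_2)\psi(h_2,S(h_2)_1)
\]
after splitting Sweedler indices. Writing $\Delta S(x)=S(x_2)\otimes S(x_1)$, this becomes
\[
\eps(h)=\chi(h_1)\chi(S(h_3))\psi(h_2,S(h_4)).
\]
That is exactly $\chi*\bar\chi$, using $\chi(1)=1$. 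So the right-inverse identity is essentially immediate from the compatibility relation with $b=S(a_2)$.

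The left identity $\bar\chi*\chi=\eps$ is the main obstacle. The natural trick is \eqref{eq:chi} with $a=S(h_1)$, $b=h_2$, which gives
\[
\eps(h)=\chi(S(h_2))\chi(h_3)\psi(S(h_1),h_4).
\]
This has the pairing factor in the wrong position relative to $\bar\chi(h_1)\chi(h_2)=\chi(S(h_2))\psi(h_1,S(h_3))\chi(h_4)$. I would convert using $\psi(S(x),y)=\psi(x,S(y))$. Then a one-sided inverse together with the other one-sided identity should suffice: an element of the convolution algebra $H^*$ with both a left and a right inverse has them equal. So I would aim to exhibit some left inverse $\ell$ from the $a=S(h_1)$ computation, $\ell(h)=\chi(S(h_1))\psi(S(h_2)\ldots)$ suitably arranged. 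Then $\ell=\ell*\chi*\bar\chi=\bar\chi$ forces $\bar\chi*\chi=\eps$ without matching formulas exactly.

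As a fallback, invertibility alone follows from Theorem~\ref{thm:compatible-characters} and Theorem~\ref{thm:coideal-character-injectivity}, after transporting $\chi$ via $T$. Uniqueness of inverses then identifies the inverse with $\bar\chi$ once the right-inverse identity is checked.
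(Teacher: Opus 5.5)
\textbf{Gap in the left identity.} Your proofs of $\chi*\bar\chi=\eps$ and of $(\eps\otimes\id)\xi_{\tau,\chi}=W_\chi$ are correct and match the paper. The left identity $\bar\chi*\chi=\eps$, however, is not proved, and the ``obstacle'' you found is a Sweedler misindexing. In \eqref{eq:chi} the factor $\chi$ receives $b_2$ and $\psi$ receives $b_1$. So with $a=S(h_1)$, $b=h_2$ one gets
\[
\eps(h)=\chi(S(h_1)h_2)=\chi(S(h_2))\,\chi(h_4)\,\psi(S(h_1),h_3),
\]
not $\chi(S(h_2))\chi(h_3)\psi(S(h_1),h_4)$. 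The identity $\psi(S(x),y)=\psi(x,S(y))$ follows from $\tau S=S\tau$ and the last pairing axiom. Applying it turns the display into $\chi(S(h_2))\psi(h_1,S(h_3))\chi(h_4)=(\bar\chi*\chi)(h)$ exactly, and this is the paper's argument.

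As written, your left inverse $\ell$ is never specified (``suitably arranged''). The fallback also does not work as stated. Theorem~\ref{thm:coideal-character-injectivity} gives a convolution-invertible \emph{extension} of $t_\chi\circ\Gamma_\tau^{-1}$ in $K^*$, and you give no argument transferring this to invertibility of $\chi$ in $H^*$.

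Your uniqueness-of-inverses idea is sound once the indices are fixed. Put $\ell(x)=\chi(S(x_2))\psi(S(x_1),x_3)$. By the display above, $\ell*\chi=\eps$, hence $\ell=\ell*\chi*\bar\chi=\bar\chi$. This route even avoids the identity $\psi(S(x),y)=\psi(x,S(y))$.

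\textbf{Composition order of the $W_f$.} You wrote $W_fW_g=W_{g*f}$, but in fact $W_fW_g=W_{f*g}$, because $W_f(h_1)g(h_2)=h_1f(h_2)g(h_3)$. This matters here. It gives $W_{\bar\chi}W_\chi=W_{\bar\chi*\chi}$, so the final claim $W_{\bar\chi}\circ(\eps\otimes\id)\circ\xi_{\tau,\chi}=\id_H$ depends precisely on the left identity you left open. The identity you did prove, $\chi*\bar\chi=\eps$, only yields $W_\chi W_{\bar\chi}=\id_H$. Your reversed formula hides this dependence.
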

	
	\begin{proof}
		Since $\tau S=S\tau$, the Hopf-pairing identity gives
		\[
		\psi(S(x),y)=\psi(x,S(y)).
		\]
		Since $\chi(1)=1$, applying the compatibility relation to
		$h_1S(h_2)$ gives
		\begin{align*}
			(\chi*\bar\chi)(h)
			=
			\chi(h_1)\chi(S(h_3))\psi(h_2,S(h_4))
			=
			\chi(h_1S(h_2))
			=
			\eps(h).
		\end{align*}
		Using $\psi(S(x),y)=\psi(x,S(y))$ and applying the compatibility
		relation to $S(h_1)h_2$, we similarly obtain
		\begin{align*}
			(\bar\chi*\chi)(h)
			=
			\chi(S(h_2))\chi(h_4)\psi(h_1,S(h_3))
			=
			\chi(S(h_2))\chi(h_4)\psi(S(h_1),h_3)
			=
			\chi(S(h_1)h_2)
			=
			\eps(h).
		\end{align*}
		Thus $\bar\chi$ is the two-sided convolution inverse of $\chi$.
		
		For $f,g\in H^*$,
		\[
		W_f\circ W_g=W_{f*g},
		\]
		so $W_\chi$ is a linear automorphism with inverse
		$W_{\bar\chi}$. Finally,
		\[
		((\eps\otimes\id)\circ\xi_{\tau,\chi})(h)
		=
		\chi(h_2)\eps(\tau(h_3))h_1
		=
		h_1\chi(h_2)
		=
		W_\chi(h),
		\]
		and hence
		\[
		W_{\bar\chi}\circ(\eps\otimes\id)\circ\xi_{\tau,\chi}
		=\id_H.
		\]
	\end{proof}

	We next determine the comodule-algebra isomorphisms
	between the $H^\imath_\tau$ and compare the images
	of the maps $\xi_{\tau,\chi}$.
	All convolutions below use $\Delta_H$; for
	$r\in\Alg(H,k)$, write $r^{-1}=r\circ S$.
	
	\begin{proposition}[Comodule-algebra isomorphisms]
		\label{prop:ihopf-isomorphisms}
		Let $\tau,\upsilon\in\EndHopf(H)$.
		The isomorphisms
		$H^\imath_\tau\to H^\imath_\upsilon$
		in $\Alg(\Comod(K^\sigma))$
		are precisely the maps
		\[
		W_r(h)=h_1r(h_2),
		\qquad
		r\in\Alg(H,k),\quad
		\upsilon*r=r*\tau,
		\]
		with $r=\eps\circ W_r$ uniquely determined.
		Consequently,
		$H^\imath_\tau\cong H^\imath_\upsilon$
		in $\Alg(\Comod(K^\sigma))$ if and only if
		$\upsilon=r*\tau*r^{-1}$ for some
		$r\in\Alg(H,k)$.
		Thus the isomorphism classes are the orbits
		of $\Alg(H,k)$ on $\EndHopf(H)$ under
		\[
		r\cdot\tau=r*\tau*r^{-1}.
		\]
	\end{proposition}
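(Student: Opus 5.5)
Because $F_\sigma$ and $F_{\sigma^{-1}}$ leave underlying linear maps unchanged, the plan is to argue in the undeformed setting first. There,
\[
\Hom_{\Alg(\Comod(K^\sigma))}(H^\imath_\tau,H^\imath_\upsilon)
=\Hom_{\Alg(\Comod(K))}\bigl((H,\delta_\tau),(H,\delta_\upsilon)\bigr)
\]
as sets of linear maps, by Proposition~\ref{prop:ihopf-deformation} together with the argument already used in Lemma~\ref{lem:quotient-coaction-morphisms}. I would then apply Lemma~\ref{lem:quotient-coaction-morphisms} with the Hopf quotient $\pi:K\to H^{\cop}$, $\pi(u\otimes a)=\eps(u)a$, from Lemma~\ref{lem:graph-stable-ideals}. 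Indeed $(\id\otimes\pi)\delta_\tau(h)=h_2\otimes h_1=\Delta_{H^{\cop}}(h)$, so both coactions satisfy \eqref{eq:quotient-coaction} with $Q=H^{\cop}$. The lemma then says that the morphisms are exactly $U_r=(r\otimes\id)\Delta_{H^{\cop}}$ with $r\in\Alg(H,k)$ and $\delta_\upsilon=(\delta_\tau)_r$, that every such morphism is an isomorphism, and that $r=\eps\circ U_r$ is unique. Since $\Delta_{H^{\cop}}(h)=h_2\otimes h_1$, we get $U_r(h)=r(h_2)h_1=W_r(h)$.

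The remaining step, and the only real computation, is to turn the condition $\delta_\upsilon W_r=(W_r\otimes\id_K)\delta_\tau$ into $\upsilon*r=r*\tau$. Write $W_r(h)=h_1r(h_2)$. The left side is
\[
h_2\otimes\bigl(\upsilon(h_3)\otimes h_1\bigr)\,r(h_4),
\]
and the right side is
\[
h_2r(h_3)\otimes\bigl(\tau(h_4)\otimes h_1\bigr).
\]
For the forward direction, apply $\eps\otimes\id\otimes\eps$ to both sides. This gives $\upsilon(h_1)r(h_2)=r(h_1)\tau(h_2)$, which is $\upsilon*r=r*\tau$ as maps $H\to H$, with scalars embedded via the unit. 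For the converse, assume this identity and apply it to the sub-tensor $h_3\otimes h_4$ in the left side. Relabelling indices then turns it into the right side. Coassociativity makes this relabelling routine.

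Since $r$ is a character, $r^{-1}=r\circ S$ is its convolution inverse. The condition $\upsilon*r=r*\tau$ is therefore equivalent to $\upsilon=r*\tau*r^{-1}$. This gives the isomorphism criterion and the description of classes as orbits. To see that $r\cdot\tau=r*\tau*r^{-1}$ is an action on $\EndHopf(H)$, I would check that $r*\tau*r^{-1}$ is again a Hopf map. It is the composite of $\tau$ with the automorphism $x\mapsto r(x_1)x_2r^{-1}(x_3)$, which is a Hopf algebra map because $r$ is a character. Alternatively, the isomorphism $W_r$ identifies the coaction $\delta_\upsilon$ with one of graph form, so no separate check is needed. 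Functoriality, $(s*r)\cdot\tau=s\cdot(r\cdot\tau)$, is immediate from associativity of convolution. I expect the main obstacle to be bookkeeping: matching the $\cop$ and Sweedler conventions so that the orientation comes out as $\upsilon*r=r*\tau$ rather than $r*\upsilon=\tau*r$.
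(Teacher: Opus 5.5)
Your proposal is correct and follows essentially the same route as the paper's proof. Both apply Lemma~\ref{lem:quotient-coaction-morphisms} to the quotient $\pi:K\to H^{\cop}$, $\pi(u\otimes a)=\eps(u)a$, identify $U_r$ with $W_r$, obtain $\upsilon*r=r*\tau$ from colinearity by applying $\eps\otimes\id\otimes\eps$, and use the Hopf automorphism $c_r(h)=r(h_1)h_2r^{-1}(h_3)$ to see that $r*\tau*r^{-1}=\tau\circ c_r$ lies in $\EndHopf(H)$.
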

	
	\begin{proof}
		Let $Q=H^{\cop}$ and
		$\pi(u\otimes a)=\eps(u)a$. By \eqref{eq:delta},
		\[
		(\id\otimes\pi)\delta_\tau
		=(\id\otimes\pi)\delta_\upsilon
		=\Delta_Q.
		\]
		Since $Q$ and $H$ have the same algebra
		and $\Delta_Q=\Delta_H^{\cop}$,
		the map $U_r$ of
		Lemma~\ref{lem:quotient-coaction-morphisms}
		is $W_r$. That lemma and
		Proposition~\ref{prop:ihopf-deformation}
		show that every isomorphism is $W_r$
		for a unique $r\in\Alg(H,k)$.
		
		Moreover,
		\[
		\begin{aligned}
			\delta_\upsilon W_r(h)
			&=h_2\otimes(\upsilon*r)(h_3)\otimes h_1,\\
			(W_r\otimes\id_K)\delta_\tau(h)
			&=h_2\otimes(r*\tau)(h_3)\otimes h_1.
		\end{aligned}
		\]
		Applying $\eps\otimes\id_H\otimes\eps$
		shows that colinearity is equivalent to
		\[
		\upsilon*r=r*\tau
		\iff \upsilon=r*\tau*r^{-1}.
		\]
		
		For $r\in\Alg(H,k)$, put
		$c_r(h)=r(h_1)h_2r^{-1}(h_3)$.
		Then
		\[
		c_r(ab)=c_r(a)c_r(b),\qquad
		\Delta c_r=(c_r\otimes c_r)\Delta,
		\qquad c_r^{-1}=c_{r^{-1}}.
		\]
		Hence $c_r\in\AutHopf(H)$ and
		\[
		r*\tau*r^{-1}
		=\tau\circ c_r\in\EndHopf(H).
		\]
		Finally,
		\[
		r\cdot(s\cdot\tau)=(r*s)\cdot\tau,
		\qquad \eps\cdot\tau=\tau,
		\]
		proving the orbit assertion.
	\end{proof}
	
	\begin{corollary}[Equality of images]
		\label{cor:compatible-equal-images}
		Let $\tau,\upsilon\in\EndHopf(H)$, and let
		$\chi,\psi:H\to k$ be nonzero $\tau$-twisted
		and $\upsilon$-twisted compatible maps,
		respectively. Then
		\[
		\operatorname{im}\xi_{\tau,\chi}
		=\operatorname{im}\xi_{\upsilon,\psi}
		\]
		if and only if there is a unique
		$r\in\Alg(H,k)$ satisfying
		\begin{equation}\label{eq:compatible-equal-images}
			\upsilon=r*\tau*r^{-1},
			\qquad
			\psi=\chi*r^{-1}.
		\end{equation}
		Moreover, $r=\psi^{-1}*\chi$, where
		$\psi^{-1}$ is the convolution inverse
		from Proposition~
		\ref{prop:compatible-convolution-inverse}.
	\end{corollary}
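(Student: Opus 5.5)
The plan is to reduce the corollary to Theorem~\ref{thm:character-evaluation-images}, with $Q=H^{\cop}$ and $\pi(u\otimes a)=\eps(u)a$, as in the proof of Proposition~\ref{prop:ihopf-isomorphisms}. By \eqref{eq:delta}, $(\id\otimes\pi)\delta_\tau=(\id\otimes\pi)\delta_\upsilon=\Delta_Q$. So $A=(H,\delta_\tau)$ and $A'=(H,\delta_\upsilon)$ fall under Lemma~\ref{lem:quotient-coaction-morphisms}, and Proposition~\ref{prop:ihopf-deformation} identifies $A_\sigma=H^\imath_\tau$ and $A'_\sigma=H^\imath_\upsilon$.

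By Theorem~\ref{thm:compatible-characters}, $\xi_{\tau,\chi}=(t_\chi\otimes\id)\Psi_\tau$ and $\xi_{\upsilon,\psi}=(t_\psi\otimes\id)\Psi_\upsilon$. Lemma~\ref{lem:coaction-comparison} identifies these, after applying the Hopf isomorphism $\Phi^{\mathrm D}$, with the character-evaluation maps $j^{\delta_\tau}_{t_\chi}$ and $j^{\delta_\upsilon}_{t_\psi}$ into $K^\sigma$. Since $\Phi^{\mathrm D}$ is bijective, equality of the images of the $\xi$'s is equivalent to equality of the images of the $j$'s. Theorem~\ref{thm:character-evaluation-images} then says that $\operatorname{im}\xi_{\tau,\chi}=\operatorname{im}\xi_{\upsilon,\psi}$ if and only if there is a unique $r\in\Alg(H,k)$ with $\delta_\upsilon=(\delta_\tau)_r$ and $t_\psi=r^{-1}*_Q t_\chi$. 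By Proposition~\ref{prop:ihopf-isomorphisms}, the first condition is equivalent to $\upsilon=r*\tau*r^{-1}$.

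It remains to translate the character condition into $\psi=\chi*r^{-1}$. Convolution in $Q$ uses $\Delta_H^{\cop}$, so $r^{-1}*_Q t_\chi=t_\chi*r^{-1}$ with $H$-convolution; equivalently $t_\psi=t_\chi\circ W_{r^{-1}}$. I would apply $Q(\cdot)$ from Lemma~\ref{lem:TQ}, which inverts $\chi\mapsto t_\chi$, using the twisting by $\upsilon$ on the left side and by $\tau$ on the right. Concretely, I would compute
\[
\chi_{t_\chi*r^{-1}}^{\upsilon}(h)=t_\chi(h_2)r^{-1}(h_3)\phi(\upsilon(h_4),h_1).
\]
Then I substitute $\upsilon=r*\tau*r^{-1}$, so $\upsilon(h_4)$ becomes $r(h_4)\tau(h_5)r^{-1}(h_6)$, and the scalars $r(h_4)$ and $r^{-1}(h_3)$ cancel. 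To evaluate $\phi$ I would use that $r$ is a character together with the pairing identity $\phi(a,bb')=\phi(a_1,b)\phi(a_2,b')$. The goal is to arrive at $\chi(h_1)r^{-1}(h_2)$, possibly after inserting the expression \eqref{eq:tchi} for $t_\chi$. This bookkeeping of twisted pairings under conjugation is the main obstacle. If the direct computation resists, I would instead verify the converse direction: show that $\chi*r^{-1}$ is $\upsilon$-twisted compatible, check that $T_\upsilon(\chi*r^{-1})=T_\tau(\chi)*r^{-1}$ by expanding both sides, and then conclude by injectivity of $T_\upsilon$.

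Finally, for the formula for $r$: convolving $\psi=\chi*r^{-1}$ on the left by $\psi^{-1}$ (Proposition~\ref{prop:compatible-convolution-inverse}) and on the right by $r$ gives $\psi^{-1}*\chi*r^{-1}=\eps$, hence $r=\psi^{-1}*\chi$. This also gives another proof of the uniqueness of $r$.
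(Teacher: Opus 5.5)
Your proposal is correct, but it finishes differently from the paper. The paper uses condition (ii) of Theorem~\ref{thm:character-evaluation-images} rather than (iii). The images coincide iff $\upsilon*r=r*\tau$ and $\xi_{\upsilon,\psi}W_r=\xi_{\tau,\chi}$. Applying $\eps_H\otimes\id_H$ and using $(\eps_H\otimes\id_H)\xi_{\tau,\chi}=W_\chi$, which does not involve $\tau$, gives $W_{\psi*r}=W_\chi$, hence $\psi*r=\chi$. The converse is the one-line check $\xi_{\upsilon,\psi}W_r(h)=(\psi*\upsilon*r)(h_2)\otimes h_1=(\chi*\tau)(h_2)\otimes h_1$.

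You instead work with the character equation $t_\psi=t_\chi*r^{-1}$ and transport it through the inverse pair $T,Q$ of Lemma~\ref{lem:TQ}. The step you flag as the main obstacle is in fact immediate. Write $Q_\upsilon,Q_\tau$ for the maps of Lemma~\ref{lem:TQ} built from $\upsilon$ and $\tau$, and substitute $\upsilon(x)=r(x_1)\tau(x_2)r^{-1}(x_3)$. The scalars leave $\phi$ by linearity, and
\[
Q_\upsilon(t_\chi*r^{-1})(h)=t_\chi(h_2)\,(r^{-1}*r)(h_3)\,\phi(\tau(h_4),h_1)\,r^{-1}(h_5)
=\bigl(Q_\tau(t_\chi)*r^{-1}\bigr)(h)=(\chi*r^{-1})(h).
\]
This uses only $r^{-1}*r=\eps_H$, with no pairing identity. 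Since $Q_\upsilon$ is bijective with inverse $T_\upsilon$, it gives $t_\psi=t_\chi*r^{-1}\iff\psi=\chi*r^{-1}$. So you do not need the fallback of separately checking that $\chi*r^{-1}$ is $\upsilon$-twisted compatible.

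The paper's route is more economical: it never touches the $T/Q$ correspondence or formula \eqref{eq:tchi}, only the explicit formula for $\xi$. Yours yields a slightly more structural fact. The bijection $\chi\mapsto t_\chi$ intertwines the free action of Theorem~\ref{thm:character-evaluation-images} on pairs $(\delta_\tau,t)$ with the action $(\tau,\chi)\mapsto(r*\tau*r^{-1},\chi*r^{-1})$ on pairs of Hopf endomorphisms and compatible maps.
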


	\begin{proof}
		By Proposition~\ref{prop:ihopf-deformation},
		Lemma~\ref{lem:coaction-comparison}, and
		Theorem~\ref{thm:compatible-characters},
		the maps $\xi_{\tau,\chi}$ and
		$\xi_{\upsilon,\psi}$ correspond to
		character-evaluation maps under
		$\Phi^{\mathrm D}$.
		
		By Theorem~\ref{thm:character-evaluation-images}
		and Proposition~\ref{prop:ihopf-isomorphisms},
		their images coincide if and only if there is
		$r\in\Alg(H,k)$ satisfying
		\[
		\upsilon*r=r*\tau,\qquad
		\xi_{\upsilon,\psi}W_r=\xi_{\tau,\chi}.
		\]
		Since
		\[
		(\eps_H\otimes\id_H)\xi_{\tau,\chi}=W_\chi,
		\qquad
		(\eps_H\otimes\id_H)\xi_{\upsilon,\psi}=W_\psi,
		\]
		the second equality implies
		$W_\psi W_r=W_\chi$, hence $\psi*r=\chi$.
		
		Conversely, if $\upsilon*r=r*\tau$ and
		$\psi*r=\chi$, then \eqref{eq:xi} gives
		\[
		\xi_{\upsilon,\psi}W_r(h)
		=(\psi*\upsilon*r)(h_2)\otimes h_1
		=(\chi*\tau)(h_2)\otimes h_1
		=\xi_{\tau,\chi}(h).
		\]
		Thus the conditions are equivalent to
		\eqref{eq:compatible-equal-images}.
		Finally, Proposition~
		\ref{prop:compatible-convolution-inverse}
		gives $r=\psi^{-1}*\chi$.
	\end{proof}

	We now turn to the Borel specialization of
	\cite[Section~4.1]{CLPRW}. Work over
	$k=\mathbb Q(v^{1/2})$, with $v$ an indeterminate, and
	take the Cartan matrix $C=(c_{ij})_{i,j\in I}$ and the
	positive integral symmetrizer $D=\operatorname{diag}(d_i)$
	of that construction. Put $v_i=v^{d_i}$.
	
	Let $H=\widetilde{\mathbf B}$ be the corresponding Hopf
	algebra, generated by $h_i^{\pm1}$ and $\vartheta_i$
	$(i\in I)$, with the Hopf pairing $\phi$ of
	\cite[(4.7)]{CLPRW}. Put $K=H\otimes H^{\cop}$, and let
	$\sigma$ be the cocycle of \eqref{eq:sigma}.
	
	We use the relations and coproducts
	\[
	h_i\vartheta_j
	=v_i^{c_{ij}}\vartheta_jh_i,
	\qquad
	\Delta(h_i)=h_i\otimes h_i,
	\qquad
	\Delta(\vartheta_i)
	=\vartheta_i\otimes1+h_i\otimes\vartheta_i.
	\]
	Here $v_i^2\neq1$, the $h_i$ are algebraically
	independent, and $\vartheta_i\neq0$.
	
	Let $\tau,\upsilon$ be diagram involutions preserving
	$C$ and $D$, acting by
	\[
	\alpha(h_i)=h_{\alpha(i)},\qquad
	\alpha(\vartheta_i)=\vartheta_{\alpha(i)},
	\qquad \alpha\in\{\tau,\upsilon\}.
	\]

	\begin{remark}
		In this setting, Chen--Lu--Pan--Ruan--Wang
		construct the relevant $\tau$-twisted compatible
		map in \cite[Lemma~4.2]{CLPRW}, establish the
		corresponding algebra embedding in
		\cite[Lemma~4.3]{CLPRW}, and identify the
		$\imath$-Hopf algebra on the Borel with the
		quasi-split universal $\imath$-quantum group
		in \cite[Theorem~4.4]{CLPRW}.
	\end{remark}

	\begin{corollary}\label{cor:borel-specialization}
		In the Borel setting above, we have
		\begin{enumerate}[label=(\roman*)]
			\item In $\Alg(\Comod(K^\sigma))$,
			$H^\imath_\tau\cong H^\imath_\upsilon$
			if and only if $\tau=\upsilon$.
			
			\item
			$\Aut_{\Alg(\Comod(K^\sigma))}
			(H^\imath_\tau)=\{\id_H\}$.
			
			\item For nonzero $\tau$- and
			$\upsilon$-twisted compatible maps
			$\chi,\psi$, respectively,
			\[
			\operatorname{im}\xi_{\tau,\chi}
			=\operatorname{im}\xi_{\upsilon,\psi}
			\iff \tau=\upsilon,\quad\chi=\psi.
			\]
		\end{enumerate}
	\end{corollary}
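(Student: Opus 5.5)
The plan is to reduce all three parts to Proposition~\ref{prop:ihopf-isomorphisms} and Corollary~\ref{cor:compatible-equal-images}. The one Borel-specific input is the determination of $\Alg(H,k)$ together with the action $r\cdot\tau=r*\tau*r^{-1}$.

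\emph{Characters.} A character $r\in\Alg(H,k)$ is determined by the values $r(h_i)\in k^\times$ and $r(\vartheta_i)$. Apply $r$ to the relation $h_i\vartheta_i=v_i^{c_{ii}}\vartheta_ih_i=v_i^{2}\vartheta_ih_i$. This gives $r(h_i)r(\vartheta_i)(1-v_i^2)=0$. Since $v_i^2\neq1$ and $r(h_i)$ is invertible, we get $r(\vartheta_i)=0$. Hence every character kills all $\vartheta_i$ and is the extension of a group character $h_i\mapsto\lambda_i\in k^\times$.

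\emph{The action.} Next compute $r*\tau*r^{-1}$ on generators. On group-likes it gives $r(h_i)\tau(h_i)r^{-1}(h_i)=h_{\tau(i)}$, so the $h$-part of $\tau$ is unchanged. On $\vartheta_i$, using $\Delta^{(2)}(\vartheta_i)=\vartheta_i\otimes1\otimes1+h_i\otimes\vartheta_i\otimes1+h_i\otimes h_i\otimes\vartheta_i$ and $r(\vartheta_i)=r^{-1}(\vartheta_i)=0$, we get
\[
(r*\tau*r^{-1})(\vartheta_i)=r(h_i)\,\vartheta_{\tau(i)}.
\]
Here I would double-check whether the surviving term is the middle one; any such term is a nonzero scalar times $\vartheta_{\tau(i)}$, which is all that matters.

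\emph{Part (i).} Suppose $\upsilon=r*\tau*r^{-1}$. Comparing the images of $h_i$ gives $h_{\upsilon(i)}=h_{\tau(i)}$. Algebraic independence of the $h_i$ then forces $\upsilon(i)=\tau(i)$ for all $i$, so $\upsilon=\tau$ as diagram involutions. Proposition~\ref{prop:ihopf-isomorphisms} then gives (i). The converse direction of (i) is trivial.

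\emph{Part (ii).} Take $\upsilon=\tau$. The same computation gives $r(h_i)\vartheta_{\tau(i)}=\vartheta_{\tau(i)}$. Since $\vartheta_{\tau(i)}\neq0$, this forces $r(h_i)=1$ for all $i$. Therefore $r=\eps$, and the only automorphism is $W_\eps=\id_H$.

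\emph{Part (iii).} By Corollary~\ref{cor:compatible-equal-images}, equality of images is equivalent to the existence of $r$ with $\upsilon=r*\tau*r^{-1}$ and $\psi=\chi*r^{-1}$. Parts (i) and (ii) force $\tau=\upsilon$ and $r=\eps$, hence $\psi=\chi$. Conversely, $r=\eps$ works when $\tau=\upsilon$ and $\chi=\psi$.

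The main obstacle is the coproduct bookkeeping in the action computation. Everything else is formal once $r(\vartheta_i)=0$ is established.
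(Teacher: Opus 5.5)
Your proposal is correct and follows essentially the same route as the paper: show $r(\vartheta_i)=0$, compute the conjugation action $r*\tau*r^{-1}$ on generators, and feed this into Proposition~\ref{prop:ihopf-isomorphisms} and Corollary~\ref{cor:compatible-equal-images}. The only cosmetic difference is that the paper writes $r*\tau*r^{-1}=\tau\circ c_r$, uses $c_r(h_i)=h_i$ and $c_r(\vartheta_i)=r(h_i)\vartheta_i$, and then cancels the bijective $\tau$, whereas you evaluate $r*\tau*r^{-1}$ directly; your hedge is unnecessary, since the surviving term is indeed the middle one, $r(h_i)\vartheta_{\tau(i)}$, because $r^{-1}(\vartheta_i)=-r(h_i)^{-1}r(\vartheta_i)=0$.
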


	\begin{proof}
		Let $r\in\Alg(H,k)$ and put
		$\lambda_i=r(h_i)\in k^\times$.
		Since $h_i\vartheta_i=v_i^2\vartheta_i h_i$ and $v_i^2\neq1$, 
		we have $r(\vartheta_i)=0$.
		The coproduct formulas above, together with
		$r*r^{-1}=\eps_H$, give
		\[
		c_r(h_i)=h_i,\qquad
		c_r(\vartheta_i)=\lambda_i\vartheta_i,
		\qquad c_r=r*\id_H*r^{-1}.
		\]
		Hence
		\[
		c_r=\id_H
		\iff \lambda_i=1\quad\text{for all }i
		\iff r=\eps_H.
		\]
		
		By Proposition~\ref{prop:ihopf-isomorphisms},
		an isomorphism in (i) requires
		$\upsilon=\tau\circ c_r$.
		Comparing the images of $h_i$ gives
		$\upsilon=\tau$, since both are diagram
		involutions. Hence $c_r=\id_H$ and
		$r=\eps_H$. The converse is immediate.
		
		Taking $\upsilon=\tau$ proves (ii).
		Finally, Corollary~\ref{cor:compatible-equal-images}
		gives (iii), since its character $r$ must be
		$\eps_H$.
	\end{proof}

	\section*{Acknowledgments}
	
	I thank Prof. Xin Fang for a helpful conversation on quantum groups.
	
\end{document}